\documentclass[11pt, twoside]{article}

\usepackage[english]{babel}

\usepackage{bbm}
\usepackage{amssymb}
\usepackage{amsfonts}
\usepackage{amsmath}
\usepackage{amsthm}
\usepackage{color}
\usepackage{mathrsfs}
\usepackage{txfonts}
\usepackage{bbm}
\usepackage{enumerate}
\usepackage{anysize}
\usepackage{indentfirst}

\usepackage[T1]{fontenc}
\usepackage{latexsym}

\usepackage[colorlinks=true,
linkcolor=blue,
citecolor=red,
urlcolor=magenta,
]{hyperref}

\allowdisplaybreaks

\newtheorem{theorem}{Theorem}[section]
\newtheorem{letteredtheorem}{Theorem}

\newtheorem{lemma}[theorem]{Lemma}
\newtheorem{corollary}[theorem]{Corollary}
\newtheorem{proposition}[theorem]{Proposition}

\theoremstyle{definition}
\newtheorem{remark}[theorem]{Remark}

\newcounter{assum}

\renewcommand{\appendix}{\par
\setcounter{section}{0}%
\setcounter{subsection}{0}%
\setcounter{subsubsection}{0}%
\gdef\thesection{\@Alph\c@section}%
\gdef\thesubsection{\@Alph\c@section.\@arabic\c@subsection}%
\gdef\theHsection{\@Alph\c@section.}%
\gdef\theHsubsection{\@Alph\c@section.\@arabic\c@subsection}%
\csname appendixmore\endcsname
}

\numberwithin{equation}{section}

\begin{document}

\arraycolsep=1pt

\title{\bf\Large
Optimal Weak-Type Estimates and Their Applications
of Lifted Rough Maximal Operators
\footnotetext{\hspace{-0.35cm} 2020
\emph{Mathematics Subject Classification}. Primary 42B25;
Secondary 42B20, 42B30.
\endgraf \emph{Key words and phrases}.
lifted rough maximal operator, generalized Poisson integral,
weak-type boundedness, Hardy space.
\endgraf
This project is partially supported by the National Natural Science Foundation of China
(Grant Nos. 12431006 and 12371093), the Beijing Natural Science
Foundation (Grant No. 1262011), and
the Fundamental Research Funds
for the Central Universities (Grant No. 2253200028).}}
\author{Dachun Yang\footnote{Corresponding author,
E-mail: \texttt{dcyang@bnu.edu.cn}/{\color{red} \today}/Final version.}, \
Wen Yuan and Yirui Zhao
}
\date{}
\maketitle

\vspace{-0.6cm}

\begin{center}
\begin{minipage}{13cm}
{\small {\bf Abstract}\quad
Let $n\in\mathbb N\cap[2,\infty)$ and $\Omega\in L^1(\mathbb S^{n-1})$ with $\Omega\not\equiv 0$.
In this article, we introduce a new family of lifted rough maximal operators
$\{\mathcal{M}_\theta^\Omega\}_{\theta\in(0,\infty)}$ in the upper-half plane and establish their optimal
weak-type estimates.
Specifically, we prove that, for any $p \in (1, \infty)$,
the estimate, with the positive equivalence constants independent of $f$,
\[
\sup_{\theta,\lambda\in(0,\infty)}\lambda^p
\underset{{\mathcal M}^\Omega_\theta(f)(x,t) > \lambda t^\frac{\gamma}{p}}
{\int_{\mathbb R^n}\int_0^\infty}
t^{\gamma-1}\,dt\,dx \sim \|f\|_{L^p(\mathbb{R}^n)}^p
\]
holds for all $f\in L^p(\mathbb R^n)$ if and only if $\gamma\in\mathbb R\setminus\{0\}$.
For the endpoint case $p=1$ and $\Omega \in L(\log L)(\mathbb{S}^{n-1})$,
we prove that the above estimate
holds if and only if $\gamma \in (-\infty, -n) \cup (0, \infty)$.
As applications, we obtain weak-type estimates for generalized
Poisson integrals without any logarithmic integrability assumptions, which
gives an affirmative answer to the question posed by
Sj\"ogren and Soria in page 228 of [Israel J. Math. 95 (1996)].
Moreover, although the operator $M^\ast_\Omega$, arising from the method of rotation of
Calder\'on and Zygmund, is not of weak type $(1,1)$,
we find that its lifted variant is weak type $(1,1)$. In addition,
we establish a new characterization of Hardy spaces in terms of
truncated rough singular integrals.
}
\end{minipage}
\end{center}

\vspace{0.2cm}

\tableofcontents

\vspace{0.2cm}

\section{Introduction}

The study of maximal operators with rough kernels is deeply rooted
in the theory of singular integrals and the boundary value problems of harmonic functions.
Recall that the \emph{rough maximal operator $M^{\Omega}$}
is  defined by setting, for any $f\in L^1_{\mathrm{loc}}(\mathbb{R}^{n})$
and $x\in\mathbb{R}^n$,
\begin{align*}
M^\Omega(f)(x):=\sup_{r\in(0,\infty)}\frac{1}{|B(x,r)|}
\int_{B(x,r)}|f(y)|\left|\Omega\left(\frac{x-y}{|x-y|}\right)\right|dy,
\end{align*}
where $\Omega\in L^1(\mathbb S^{n-1})$ and $L^1_{\mathrm{loc}}(\mathbb{R}^{n})$
denotes the set of all locally integrable functions on $\mathbb R^n$.
It is known that $M^\Omega$ is strong $(p,p)$ by the method of
rotation of Calder\'on and Zygmund \cite{cz56} if $p\in(1,\infty]$. Later on,
Stein \cite{s67} utilized this operator to bound the maximal function arising from the
restricted convergence of Poisson integrals on multi-dimensional tube domains,
and posed the problem whether $M^\Omega$ is of weak type $(1,1)$,
which has become a long-standing open problem in harmonic analysis.

R. Fefferman \cite{f78} solved this problem under the stronger condition that $\Omega$
is of "finite entropy" and this was extended by Soria \cite{s85} to the case where
$\Omega$ is in the (larger) block space $B_\infty$. Subsequently,
Christ \cite{c88} proved this result for $n=2$ and $\Omega\in L^q(\mathbb S^1)$
with $q>1$, which further improved by Christ and Rubio de Francia \cite{cr88}
to arbitrary $n\ge 2$ and $\Omega\in L(\log L)(\mathbb S^{n-1})$.
Recall that a measurable function $\Omega\in L(\log L)(\mathbb S^{n-1})$
if
\[
|\Omega|_{L(\log L)(\mathbb S^{n-1})}
:=
\int_{\mathbb S^{n-1}}
\left|\Omega(\theta)\right|
\left[1+\log^+\left|\Omega(\theta)\right|\right]
\,d\sigma(\theta)<\infty,
\]
where $\log^+ z:=\max\{0,\log z\}$ for any $z\in(0,\infty)$.
We summarize the classical result for $M^\Omega$ as follows.
\begin{letteredtheorem}\label{thm-classical-rough-maximal}
Let $n\in\mathbb{N}\cap [2,\infty)$ and
$\Omega\in L^1(\mathbb{S}^{n-1})$.
Then, for any $p\in(1,\infty]$, there exists a positive constant
$C$, depending only on $n$, $p$, and $\|\Omega\|_{L^1(\mathbb S^{n-1})}$, such that, for any
$f\in L^p(\mathbb{R}^n)$,
\begin{align*}
\left\|M^\Omega(f)\right\|_{L^p(\mathbb{R}^n)}
\le C
\|f\|_{L^p(\mathbb{R}^n)}.
\end{align*}
Moreover, if $\Omega\in L(\log L)(\mathbb{S}^{n-1})$,
then there exists a positive constant $C$, depending only on $n$ and
$|\Omega|_{L(\log L)(\mathbb S^{n-1})}$, such that, for any $f\in
L^1(\mathbb{R}^{n})$,
\begin{align*}
\sup_{\lambda\in(0,\infty)}\lambda\left|\left\{
x\in\mathbb{R}^n:M^\Omega(f)(x)>\lambda\right\}\right|
\le C\|f\|_{L^1(\mathbb{R}^n)}.
\end{align*}
\end{letteredtheorem}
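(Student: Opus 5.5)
The two assertions of Theorem \ref{thm-classical-rough-maximal} are of very different depth, so I would handle them separately. For $p\in(1,\infty]$ I would use the method of rotation of Calder\'on and Zygmund. Writing $y=x-r\theta$ in polar coordinates with $\theta\in\mathbb S^{n-1}$, I get, for any $r\in(0,\infty)$,
\begin{align*}
\frac{1}{|B(x,r)|}\int_{B(x,r)}|f(y)|\left|\Omega\left(\frac{x-y}{|x-y|}\right)\right|dy
=\frac{1}{|B(0,1)|}\int_{\mathbb S^{n-1}}|\Omega(\theta)|
\left[\frac{1}{r^n}\int_0^r|f(x-s\theta)|s^{n-1}\,ds\right]d\sigma(\theta).
\end{align*}
Since $s^{n-1}\le r^{n-1}$ on $(0,r)$, the bracket is at most $\frac1r\int_0^r|f(x-s\theta)|\,ds\le M_\theta f(x)$, where $M_\theta$ is the one-dimensional Hardy--Littlewood maximal operator along the direction $\theta$. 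Taking the supremum over $r$ yields the pointwise bound
\[
M^\Omega(f)(x)\lesssim\int_{\mathbb S^{n-1}}|\Omega(\theta)|\,M_\theta f(x)\,d\sigma(\theta).
\]
By Fubini's theorem along lines parallel to $\theta$ and the one-dimensional Hardy--Littlewood theorem, $\|M_\theta f\|_{L^p(\mathbb R^n)}\le C_p\|f\|_{L^p(\mathbb R^n)}$ with $C_p$ independent of $\theta$. Minkowski's integral inequality then gives
\[
\|M^\Omega(f)\|_{L^p(\mathbb R^n)}\le C\|\Omega\|_{L^1(\mathbb S^{n-1})}\|f\|_{L^p(\mathbb R^n)}.
\]
The case $p=\infty$ is immediate.

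For the weak $(1,1)$ part, the rotation method fails, because $L^{1,\infty}$ is not normable and averaging weak-type bounds over $\theta$ loses control. Here I would follow Christ and Rubio de Francia \cite{cr88}; in the paper itself this part would simply be cited. The plan is as follows.

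First, dominate $M^\Omega(f)$ by $\sup_{k\in\mathbb Z}|f|\ast K_k$, where $K_k(y):=2^{-kn}|\Omega(y/|y|)|\mathbf 1_{\{2^{k-1}\le|y|<2^k\}}(y)$.

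Second, perform a Calder\'on--Zygmund decomposition of $f$ at height $\lambda$, writing $f=g+\sum_Q b_Q$. The good part $g$ is handled by the $L^2$ bound already proved. The portion of the bad part lying on the enlarged cubes $\bigcup 2Q$ is discarded.

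Third, decompose $\Omega$ by level sets, $\Omega=\sum_{m\ge0}\Omega_m$ with $|\Omega_m|\sim 2^m$ on its support. Split the bad pieces by scale, $B_s:=\sum_{\ell(Q)=2^{k-s}}b_Q$, and pair each $\Omega_m$ with the pieces at relative scale $s$. When $s\lesssim m$, use the trivial $L^1$ bound, which costs $\sum_m 2^m\sigma(\operatorname{supp}\Omega_m)\cdot m$; this sum is exactly the $L(\log L)$ norm. When $s\gg m$, prove an $L^2$ estimate for $\sup_k|B_s\ast K_k^{(m)}|$ with exponential decay $2^{-\epsilon (s-m)}$. To get it, replace the supremum by the square function $(\sum_k|\cdot|^2)^{1/2}$ and expand $\|\sum_k B_{s}\ast K_k\|_2^2$ as $T T^*$. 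Then exploit the cancellation $\int b_Q=0$ against the regularity of $K_k\ast\widetilde K_k$, which comes from the smoothing of spherical-type measures in dimension $n\ge2$.

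I expect the main obstacle to be this $L^2$ decay estimate. The kernel $K_k$ itself has no smoothness, so all the gain must come from $\widehat{K_k}$, through oscillatory integral decay $|\widehat{\sigma_\Omega}(\xi)|$ averaged in direction, together with the $L^\infty$ bound $2^m$ on $\Omega_m$. Balancing this against the $L^1$-bounded bad functions is the delicate step.
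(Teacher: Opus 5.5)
Your argument agrees with the paper, which does not prove Theorem~\ref{thm-classical-rough-maximal} itself. For $p\in(1,\infty]$ the paper attributes the bound to the Calder\'on--Zygmund method of rotations, which is exactly your pointwise domination by $\int_{\mathbb S^{n-1}}|\Omega(\theta)|M_\theta f\,d\sigma(\theta)$ (cf.\ \eqref{eq-Ms}). For $\Omega\in L(\log L)(\mathbb S^{n-1})$ it attributes the weak type $(1,1)$ bound to Christ and Rubio de Francia \cite{cr88}.

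Your first part is a complete proof. Your second part is, like the paper's, an appeal to \cite{cr88}: the outline has the right architecture, but it does not establish the decisive $L^2$ decay estimate, which has to be taken from that reference.
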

Since then, maximal operators with rough kernels have served as a fundamental object
in the study of rough (maximal) singular integrals,
and their boundedness properties
have played an important role in the subsequent development of harmonic analysis;
see, for example, \cite{st01,c04,dl17,h87,l24,qwxyz26,ss96,ss97}.
For recent advances in rough (maximal) singular integrals, we refer to
\cite{bm23,bs25+,bs26+,chl23,l25}.
For further study of the operators with nonsmooth kernels arising from harmonic analysis,
we refer to \cite{bcdh17,bd20,cdhl21,cdhl18,hltx25,s96}.

When $\Omega\equiv 1$, the operator $M^\Omega$ coincides with
the classical \emph{Hardy--Littlewood maximal operator $M$}, which is
defined by setting, for any $f\in L^1_{\rm loc}(\mathbb R^{n})$ and $x\in\mathbb R^n$,
\begin{align*}
M(f)(x) := \sup_{r \in (0,\infty)} \frac{1}{|B(x,r)|}\int_{B(x,r)} |f(y)|\,dy.
\end{align*}
It is well known that the operator $M$ is strong $(p,p)$ for all $p\in(1,\infty]$
and is of weak type $(1,1)$.
However, in certain specific applications arising from harmonic analysis and
partial differential equations,
the Hardy--Littlewood maximal operator is inherently too coarse
to resolve the fine-scale behavior of the objects we aim to control.
As a finer alternative,
Dai et al.\ \cite{dlyyz26} recently introduced a
family of lifted Hardy--Littlewood maximal operators.
Specifically, given $\theta \in (0,\infty)$, the \emph{lifted
Hardy--Littlewood maximal operator $\mathcal{M}_\theta$} is defined by setting,
for any $f \in L^1_{\mathrm{loc}}(\mathbb{R}^n)$ and
$(x,t) \in \mathbb R^n\times (0,\infty)=:\mathbb{R}^{n+1}_+$,
\begin{align}\label{eq-lifted-hl-def}
\mathcal{M}_\theta(f)(x,t) := \sup_{r \in [\theta t, 2\theta t)}
\frac{1}{|B(x,r)|}\int_{B(x,r)} |f(y)|\,dy.
\end{align}
In \cite[Theorems 2.3 and 2.6]{dlyyz26}, Dai et al. proved that the
weak-type estimate
\begin{align}\label{eq-lifted-hl-weak}
\sup_{\theta,\lambda\in(0,\infty)}\lambda^p
\underset{{\mathcal M}_\theta(f)(x,t) > \lambda t^{\frac{\gamma}{p}}}
{\int_{\mathbb R^n}\int_0^\infty} t^{\gamma-1}\,dt\,dx \sim
\|f\|_{L^p(\mathbb{R}^n)}^p
\end{align}
holds for all $f\in L^p(\mathbb R^n)$
if and only if $p\in(1,\infty)$ and $\gamma\in\mathbb{R}\setminus\{0\}$
or $p=1$ and $\gamma \in (-\infty, -n) \cup (0, \infty)$,
where the positive equivalence constants are independent of $f$.
Here, and thereafter, to simplify the presentation, we always use
the following abbreviation:
\begin{align*}
\underset{g(x,t)>h(x,t)}{\int_{\mathbb R^n}\int_0^\infty}
\cdots\,t^{\gamma-1}\,dt\,dx
:=\int_{\mathbb R^n}\left[
\int_{\{t\in(0,\infty):\,g(x,t)>h(x,t)\}}
\cdots\,t^{\gamma-1}\,dt
\right]dx.
\end{align*}

Clearly, for any given $\theta,t\in(0,\infty)$ and $f\in L^1_{\rm loc}(\mathbb R^n)$
and for any $x\in\mathbb{R}^n$,
$\mathcal{M}_\theta(f)(x,t)\le M(f)(x)$.
Therefore, the multi-scale and the lifted structures of
the family $\{\mathcal{M}_\theta\}_{\theta\in(0,\infty)}$
make them capture those information that is lost due
to the excessive magnitude of the operator $M$ and
have been applied to refine some basic estimates related to the operator $M$ in harmonic analysis
in \cite{dlyyz26}. The basic idea here is to
replace the classical Hardy--Littlewood maximal
operator $M$ by a family of the lifted Hardy--Littlewood
maximal operators $\{\mathcal{M}_\theta\}_{\theta\in(0,\infty)}$
that enjoy local fine properties,
thereby overcoming the coarseness mentioned
above. For instance,
via the refined Lusin--Lipschitz inequalities,
\eqref{eq-lifted-hl-weak} is closly
connected with the recent works of Brezis et al. \cite{bvy21,bsvy24}, concerning the weak-type
characterization of Sobolev spaces via difference quotients.
Combined with the refined Cotlar inequality,
\eqref{eq-lifted-hl-weak} is also applied to establish new
characterizations of Hardy(--Sobolev) spaces in terms of
truncated Riesz transforms.
Moreover, \eqref{eq-lifted-hl-weak} provides a framework for the endpoint
$p=1$ within the setting of the one-parameter family of operators studied
by Dom\'inguez and Milman \cite{dm22}.

Motivated by \eqref{eq-lifted-hl-weak}, it is
natural to ask whether the weak-type estimates of lifted
maximal operators with rough kernels holds.
In this article, we affirmatively answer this question by introducing a lifted version of the rough
maximal operator. Precisely speaking,
given $\Omega\in L^1(\mathbb S^{n-1})$ and $\theta\in(0,\infty)$,
the \emph{lifted rough maximal operator ${\mathcal M}_\theta^\Omega$}
is defined by setting,
for any $f\in L^1_{\mathrm{loc}}(\mathbb{R}^n)$ and $(x,t)\in\mathbb{R}^{n+1}_+$,
\[
{\mathcal M}_\theta^\Omega(f)(x,t):=
\sup_{r\in[\theta t,2\theta t)}\frac{1}{|B(x,r)|}\int_{B(x,r)}|f(y)|
\left|\Omega\left(\frac{x-y}{|x-y|}\right)\right|dy.
\]
Moreover, we prove the following optimal weak-type estimates for
${\mathcal M}_\theta^\Omega$.

\begin{theorem}\label{thm-diag}
Let $n\in\mathbb{N}\cap[2,\infty)$, $\gamma\in\mathbb{R}$, and
$\Omega\in L^1(\mathbb S^{n-1})$ with $\Omega\not\equiv0$. Then the following
statements hold.
\begin{enumerate}[{\rm (i)}]
\item If $p\in(1,\infty)$, then the estimate
\begin{equation}\label{eq-p>1}
\sup_{\theta,\lambda\in(0,\infty)}\lambda^p
\underset{{\mathcal M}^\Omega_\theta(f)(x,t) > \lambda t^\frac{\gamma}{p}}
{\int_{\mathbb R^n}\int_0^\infty}
t^{\gamma-1}\,dt\,dx \sim \|f\|_{L^p(\mathbb{R}^n)}^p,
\end{equation}
holds for any $f \in L^p(\mathbb{R}^n)$ if and only if
\(\gamma\in\mathbb R\setminus\{0\}\), where the positive equivalence
constants are independent of $f$.
\item If further assume that $\Omega \in L(\log L)(\mathbb S^{n-1})$,
then the estimate
\begin{equation}\label{eq-p=1}
\sup_{\theta,\lambda\in(0,\infty)}\lambda
\underset{{\mathcal M}^\Omega_\theta(f)(x,t) > \lambda t^\gamma}
{\int_{\mathbb R^n}\int_0^\infty}
t^{\gamma-1}\,dt\,dx \sim \|f\|_{L^1(\mathbb{R}^n)},
\end{equation}
holds for any $f \in L^1(\mathbb{R}^n)$ if and only if
\(\gamma\in(-\infty,-n)\cup(0,\infty)\),
where the positive equivalence
constants are independent of $f$.
\end{enumerate}
\end{theorem}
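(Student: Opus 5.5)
Write $E_{\theta,\lambda}:=\{(x,t):\mathcal M^\Omega_\theta(f)(x,t)>\lambda t^{\gamma/p}\}$ and $d\mu_\gamma:=t^{\gamma-1}\,dt\,dx$. We prove the upper bound, the lower bound, and necessity of the range of $\gamma$ separately.

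\emph{Upper bound.} Fix $x$ and set $g(x):=M^\Omega(f)(x)$. Since $\mathcal M^\Omega_\theta(f)(x,t)\le g(x)$ for all $t$, the $t$-section of $E_{\theta,\lambda}$ is contained in $\{t:\ t^{\gamma/p}<g(x)/\lambda\}$. For $\gamma>0$ this is the interval $t<(g(x)/\lambda)^{p/\gamma}$, whose $t^{\gamma-1}dt$-measure is $\gamma^{-1}(g(x)/\lambda)^p$. For $\gamma<0$ it is the interval $t>(g(x)/\lambda)^{p/\gamma}$, whose measure is $|\gamma|^{-1}(g(x)/\lambda)^p$. In both cases
\[
\lambda^p\mu_\gamma(E_{\theta,\lambda})\le |\gamma|^{-1}\|M^\Omega(f)\|_{L^p(\mathbb R^n)}^p\lesssim\|f\|_{L^p(\mathbb R^n)}^p
\]
by Theorem~\ref{thm-classical-rough-maximal}, uniformly in $\theta$. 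This settles (i) for $p>1$, and also (ii) for $\gamma>0$, since we only need the $L^p$ norm of $g$.

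For $p=1$ and $\gamma>0$ we cannot use $\|g\|_{L^1}$, because $M^\Omega(f)\notin L^1$. Instead we use the layer-cake identity
\[
\mu_1(E_{\theta,\lambda})=\int_0^\infty \bigl|\{x:\ \mathcal M^\Omega_\theta(f)(x,t)>\lambda t^\gamma\}\bigr|\,t^{\gamma-1}\,dt,
\]
bound each slice by $|\{x:\ M^\Omega(f)(x)>\lambda t^\gamma\}|\lesssim \|f\|_1/(\lambda t^\gamma)$ via the weak $(1,1)$ bound in Theorem~\ref{thm-classical-rough-maximal}, and cut the $t$-integral at a height chosen from the trivial bound $\mathcal M^\Omega_\theta(f)(x,t)\le \|\Omega\|_\infty$-free estimate $\lesssim (\theta t)^{-n}\int |f||\Omega(\cdot)|$. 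This last bound is the delicate point: $\Omega\in L\log L$ is unbounded, so we must use $\int_{B(x,r)}|f(y)||\Omega((x-y)/|x-y|)|\,dx\le \|\Omega\|_{L^1(\mathbb S^{n-1})}r^{n}\,|f(y)|$ after integrating in $x$. Accordingly, the plan is to estimate the averaged quantity $\int_x$ via Fubini.

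More precisely, for large $t$ we use the Chebyshev bound
\[
|\{x:\ \mathcal M_\theta^\Omega f>\lambda t^\gamma\}|\le \frac{1}{\lambda t^\gamma}\int \mathcal M^\Omega_\theta f\,dx.
\]
Since the supremum over $r\in[\theta t,2\theta t)$ is dominated by $2^n$ times the average at radius $2\theta t$, this yields
\[
\int \mathcal M^\Omega_\theta f\,dx\lesssim \|\Omega\|_1\|f\|_1.
\]
Hence the slice measure is bounded by $\min\{1,\ \cdot\,\}\cdot\|f\|_1/(\lambda t^\gamma)$ for every $t$, and the $t^{\gamma-1}dt$ integral of $t^{-\gamma}$ diverges logarithmically. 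We therefore combine the two regimes with the $L^\infty$ bound
\[
\mathcal M^\Omega_\theta f(x,t)\lesssim (\theta t)^{-n}\|\Omega\|_1\|f\|_1\sup_y(\cdots),
\]
which fails for rough $\Omega$. This is the main obstacle. The fix is to follow \cite{dlyyz26}: split $f=\sum_j f\mathbf 1_{\{2^j<|f|\le 2^{j+1}\}}$, or better, use the Calder\'on--Zygmund decomposition at a height adapted to $t$, together with the Christ--Rubio de Francia weak $(1,1)$ machinery at each scale. Because the operator lives at the single scale $\theta t$, the good part gives a decay $(\theta t)^{-n}$-type gain. For $\gamma<-n$ we use the opposite regime: the single-scale average at large $t$ satisfies
\[
\int\mathcal M^\Omega_\theta f\,dx\lesssim\|f\|_1\quad\text{and}\quad \sup_x \mathcal M_\theta^\Omega f(x,t)\lesssim \|f\|_1\,\|\Omega\|_\infty\text{-free averaged bound},
\]
and the exponent condition $\gamma<-n$ makes $\int t^{\gamma-1}\min\{\ldots\}$ converge. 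This mirrors \cite[Theorem 2.6]{dlyyz26}, with $M$ replaced by $M^\Omega$ and using Theorem~\ref{thm-classical-rough-maximal}.

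\emph{Lower bound.} Take $x$ a Lebesgue point and fix a cone $\Gamma$ of directions where $|\Omega|\ge c>0$ on a set of positive measure; such a cone exists since $\Omega\not\equiv0$. For small $t$ (when $\gamma>0$, letting $\theta\to0$) the averages over $B(x,\theta t)$ weighted by $|\Omega|$ are comparable to $\|\Omega\|_1|f(x)|$ at Lebesgue points of $f$. Hence $E_{\theta,\lambda}$ contains roughly $\{(x,t):\ c|f(x)|>\lambda t^{\gamma/p}\}$, whose measure is $\gamma^{-1}\lambda^{-p}c^p\|f\|_p^p$. For $\gamma<0$ we take $\theta\to0$ as well, so that small balls still appear while $t$ is large.

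\emph{Necessity.} For $\gamma=0$, take $f=\mathbf 1_{B(0,1)}$. For $|x|\le 1$ and all $t$ with $\theta t\lesssim1$ the average is $\gtrsim\|\Omega\|_1$, so the $dt/t$ integral over $t\in(0,1/\theta)$ diverges. For $p=1$ and $\gamma\in[-n,0)$, the same $f$ gives $\mathcal M^\Omega_\theta f(x,t)\gtrsim (\theta t)^{-n}$ for $|x|\lesssim\theta t$ on directions where $\Omega\ne0$ (averaging the set of $x$). The region $\{(\theta t)^{-n}>\lambda t^{\gamma}\}$ then has measure $\int (\theta t)^n t^{\gamma-1}\,dt$, which is infinite or too large as $\lambda\to0$, contradicting \eqref{eq-p=1}.
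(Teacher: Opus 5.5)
Your upper bound for $p\in(1,\infty)$ is exactly the paper's argument \eqref{eq-up-p>1}. Your $\gamma=0$ and $\gamma=-n$ counterexamples are essentially the paper's, although for $\gamma=-n$ you still need a far-field lower bound on $\Omega$ such as Lemma~\ref{lem-far-field}.

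\textbf{First gap: the upper bound for $p=1$ is never proved.} This is the heart of the theorem. Your claim that the $M^\Omega$ argument ``also settles (ii) for $\gamma>0$'' is wrong, as you then note, because it needs $M^\Omega(f)\in L^1$. You correctly find that slicing in $t$ with a weak $(1,1)$ bound, or Chebyshev with $\int\mathcal M^\Omega_\theta f\,dx\lesssim\|\Omega\|_{L^1(\mathbb S^{n-1})}\|f\|_{L^1(\mathbb R^n)}$, leaves $\lambda^{-1}\|f\|_{L^1(\mathbb R^n)}\int_0^\infty t^{-1}\,dt=\infty$. The proposed fix (a Calder\'on--Zygmund decomposition adapted to $t$, Christ--Rubio de Francia at each scale, or the covering argument of Dai et al.) is not an argument:
a weak-type bound at a single scale reproduces exactly this logarithm;
the covering argument for $\Omega\equiv1$ uses the isotropy of ball averages, which a rough kernel destroys;
the pointwise $\sup_x$ bound you invoke for $\gamma<-n$ requires $\Omega\in L^\infty$.

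What is missing is a mechanism that couples the scales, so that at each height level of $\Omega$ only logarithmically many scales must be paid for in $L^1$. In the paper, a dyadic $Q$ is bad if $\int_Qf>\eta\lambda\,\ell(Q)^{n+\gamma}$.
For $\gamma>0$, the maximal bad cubes cover $\{f>0\}$ a.e.\ because $n+\gamma>n$. Discarding $\bigcup_{Q}Q^*\times(0,\ell(Q))$ costs $\lesssim\sum_Q\ell(Q)^{n+\gamma}\le\|f\|_{L^1(\mathbb R^n)}/(\eta\lambda)$. Off this set, only cubes with $\ell(Q)\le t$ meet $B(x,2t)$, each of mass $\le2^{n+\gamma}\eta\lambda\,\ell(Q)^{n+\gamma}$.
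For $\gamma<-n$, one uses all bad cubes with the pieces $E_Q$ of Lemma~\ref{lem-cz-decomposition-negative}, discards $\{(x,t):t>\ell(Q),\ \operatorname{dist}(x,Q)<2t\}$, and keeps only cubes with $\ell(Q)\ge t$.

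Lemma~\ref{lem-main} then splits $\Omega$ by height. The part $\Omega\le\beta$ is $\lesssim\beta$ pointwise via $\sum\ell(Q)^{n+\gamma}\lesssim t^{n+\gamma}$. The layer $\Omega_k\sim2^k\beta$ is pointwise geometrically small once the ratio between $\ell(Q)$ and $t$ exceeds $2^{J_k}$ with $J_k\sim k$. On the remaining $O(k+1)$ scales, Chebyshev in $L^1(\nu_\gamma)$ costs $\|\Omega_k\|_{L^1(\mathbb S^{n-1})}\sum_Q\ell(Q)^{n+\gamma}$ per scale. The total is therefore controlled by $\sum_{k}(k+1)\|\Omega_k\|_{L^1(\mathbb S^{n-1})}\lesssim|\Omega|_{L(\log L)(\mathbb S^{n-1})}$. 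Nothing in your sketch plays this role.

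\textbf{Second gap: necessity for $p=1$ and $\gamma\in(-n,0)$.} Your example does not give a contradiction. Take $f=\mathbf 1_{B(\mathbf 0,1)}$ and $D:=n+\gamma\in(0,n)$. Your region $\{t^{-n}\gtrsim\lambda t^\gamma,\ t\gtrsim1\}$ is $1\lesssim t\lesssim\lambda^{-1/D}$, and
\[
\lambda\int_1^{C\lambda^{-1/D}}t^n\,t^{\gamma-1}\,dt\le\frac{C^D}{D},
\]
which stays bounded as $\lambda\to0$. The small-$t$ region near the support also contributes only $O(1)$. Your computation diverges only when $D=0$, i.e., $\gamma=-n$; for $\Omega\equiv1$ a single bump actually satisfies the weak-type bound when $\gamma\in(-n,0)$.

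One needs mass spread over many scales. The paper uses the Cantor-type function $f=\sum_{k}k^{-3/2}2^{-\gamma S_k}\sum_{Q\in\mathcal C_k}\mathbf 1_Q$, with about $2^{Dn_k}$ children per cube. Each generation $k\le N$ then contributes $\gtrsim1$ to $\sum_Q|Q|\ell(Q)^\gamma$ at the level $\lambda_N\sim N^{-1/2}$, giving $\gtrsim N^{1/2}\to\infty$. For rough $\Omega$ this also requires the lower bound of Lemmas~\ref{lem-directional-lower} and~\ref{lem-good-set} on a set $G_Q\subset Q+A\ell(Q)e$ with $|G_Q|\gtrsim|Q|$.

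\textbf{Minor point on the lower bound.} By the dilation $t\mapsto\theta t$ the quantity does not depend on $\theta$, so ``letting $\theta\to0$'' does nothing. The mechanism is $\lambda\to\infty$ (for $\gamma>0$) or $\lambda\to0$ (for $\gamma<0$). Also, $A_tf\to c_\Omega f$ must come from $L^p$ convergence or the bound for $M^\Omega$, not from Lebesgue points of $f$ alone, since $\Omega$ is unbounded.
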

\begin{remark}
Theorem \ref{thm-diag} when $\Omega\equiv1$
coincides with \eqref{eq-lifted-hl-weak}.
To the best of our knowledge,
Theorem \ref{thm-diag} in other cases are new.
\end{remark}

As in the proof of \eqref{eq-lifted-hl-weak}, the most involved part of
the proof of Theorem \ref{thm-diag} is the one of \eqref{eq-p=1} for $p=1$
and $\gamma\in(-\infty,-n)\cup(0,\infty)$.
However, the ideas behind the proof of \eqref{eq-p=1}
are essentially different from the ones of \eqref{eq-lifted-hl-weak}.
Indeed,
to estimate the size of the level sets of the operator $\mathcal{M}_\theta$,
the proof of \eqref{eq-lifted-hl-weak} in \cite{dlyyz26}
relies on a "global" geometric covering lemma because the kernel of $\mathcal{M}_\theta$
is isotropic. However, this isotropy breaks down in
the operator $\mathcal{M}_\theta^\Omega$ with a rough kernel $\Omega\in L^1(\mathbb S^{n-1})$.
As pointed out in \cite[p.\,224]{cww93}, the level sets associated
with a rough homogeneous kernel are described by starlike sets rather than
by standard Euclidean balls. Thus, a small perturbation may move a point out of the related
level sets involving $\mathcal{M}_\theta^\Omega$ and hence
the ``global'' geometric covering lemma used in
\cite{dlyyz26} cannot be applied directly in this setting.

To overcome this geometric difficulty, we
introduce a class of "bad cubes",
related to a given function, based on the scale of dyadic cubes and
borrow some ideas from the sparse domination to
establish a dyadic decomposition of any nonnegative $L^1$-function. Combining this
dyadic decomposition of the function with a height decomposition of the rough kernel,
we discretize the rough operators into some dyadic functionals and establish some subtle
estimates for their level sets, which further implies
the desired weak-type estimates.

Theorem \ref{thm-diag} yields three interesting applications,
which we describe here.
The way one can handle the rough maximal operator $M^\Omega$
is to dominate it by the operator $M^\ast_\Omega$. Specifically,
given $\Omega\in L^1(\mathbb S^{n-1})$ and
$f \in L_{\rm loc}^1(\mathbb{R}^n)$, by polar coordinates, one has
\begin{align}\label{eq-Ms}
M^\Omega(f)\le M^\ast_\Omega(f):=\int_{\mathbb{S}^{n-1}} |\Omega(\omega)|
M_{\omega}(f)\,d\sigma(\omega),
\end{align}
where, for any $\omega\in \mathbb S^{n-1}$ and $x\in\mathbb{R}^{n}$,
\begin{align*}
M_{\omega}(f)(x) := \sup_{r\in(0,\infty)} \frac{1}{2r} \int_{-r}^r |f(x-s\omega)| \,ds.
\end{align*}
This is the so-called "method of rotations" of Calder\'on and Zygmund in \cite{cz56}.
The strong $(p,p)$ bound of $M^\Omega$ follows easily from
the one of $M_\omega$ for all $p\in(1,\infty]$. However, this
method of proof breaks down for $p=1$ because
the space weak $L^1$ is not normed.
Moreover, R. Fefferman \cite[p.\,176]{f78}
proved that there exists no weak type $(1,1)$ estimate for $M^\ast_\Omega$ in general
even for $\Omega\equiv 1$.

Somewhat surprising, Sj\"ogren and Soria \cite{ss96,ss97} showed that
the operator $M^\ast_\Omega$ is of weak type $(1,1)$ when
the operator is applied to the radial functions only, where $\Omega\in L^1(\mathbb S^{n-1})$.
In \cite{ss96}, a key tool to prove this result in $2$-dimensional case is to determine the following
weak-type estimate:
\begin{align}\label{eq-wH}
\sup_{\lambda\in(0,\infty)}\lambda\int_0^\infty \left|\left\{x\in\mathbb R:
|H_\alpha f(x,y)|>\lambda \right\}\right|y^{\alpha-1}\,dy\le
\|h\|_{L^1(\mathbb R)}\|f\|_{L^1(\mathbb R)},
\end{align}
where $\alpha\in\mathbb R$, $h$ is even, nonnegative, and nonincreasing
on $(0,\infty)$, and the \emph{generalized Poisson integral $H_\alpha$}
is defined by setting, for any $f\in L^1(\mathbb R)$ and $(x,y)\in\mathbb R^{2}_+$,
\begin{align}\label{eq-io}
H_\alpha f(x,y):= \frac{1}{y^{\alpha}}\{[yh(\cdot y)]
\ast f(x)\}.
\end{align}
The estimate \eqref{eq-wH}, when $\alpha=2$, coincides with
\cite[Theorem 1]{ss96} and,
as pointed out in \cite[p.\,228]{ss96}, the estimate \eqref{eq-wH}
for $\alpha\in(1,\infty)$ follows from the same techniques of the proof of \cite[Theorem 1]{ss96}.
For $\alpha\in(-\infty,0)$, the estimate \eqref{eq-wH} can be
concluded by \cite[Theorem 1]{s83} under the additional assumption
$\int_\mathbb{R}h(x)|\log|x||\,dx<\infty$. This has been further  weakened in
\cite{ms95} under the condition $\int_\mathbb{R}h(x)|\log|\log |x|||\,dx<\infty$.
Moreover, Sj\"ogren \cite[p.\,229 and p.\,248]{s81} illustrated
the estimate \eqref{eq-wH} fails for $\alpha\in[0,1]$. In \cite[p.\,228]{ss96},
Sj\"ogren and Soria asked
\emph{whether \eqref{eq-wH} holds
for $\alpha\in(-\infty,0)$ and general $h$.}

In this article, we give an affirmative answer to this open question
by applying Theorem \ref{thm-diag} (see Remark \ref{rem:open_question}).
Indeed, via the change of variables $y=t^{-1}$ and $\gamma:=-\alpha$,
\eqref{eq-wH} is related to \eqref{eq-p=1} in a transparent way [see \eqref{eq-im}].
We show that \eqref{eq-wH} holds for $\alpha\in(-\infty,0)$ without assuming
any logarithmic integrability on $h$ and also extend it
to higher-dimensional cases (see Theorem \ref{thm:rough_application}).

As we mentioned before, one cannot expect weak type
$(1,1)$ estimate for $M^\ast_\Omega$ even for $\Omega\equiv 1$ (see \cite[p.\,178]{f78}).
In this article, we provide an alternative solution without restricting
to radial functions, different from Sj\"ogren and Soria did in \cite{ss96,ss97},
by lifting the operator $M^\ast_\Omega$ into the upper-half plane.
Moreover, we prove in Theorem \ref{thm-stronger-2.5} that, given $\Omega\in L(\log L)(\mathbb S^{n-1})$ and
$\gamma\in (-\infty,-n)\cup (0,\infty)$, there exists a positive constant
$C$, depending only on $n$ and $\gamma$, such that, for any $f\in L^1(\mathbb R^n)$,
\begin{equation}\label{eq-strong-up-p=1}
\sup_{\lambda\in(0,\infty)}
\lambda\underset{\mathcal M^\ast_{\Omega}(f)(x,t)>\lambda t^\gamma}
{\int_{\mathbb R^n}\int_0^\infty}
t^{\gamma-1}\,dt\,dx \le C(1+|\Omega|_{L(\log L)(\mathbb{S}^{n-1})})\|f\|_{L^1(\mathbb{R}^n)},
\end{equation}
where we refer to \eqref{eq-lMw} for the precise definition of
the lifted operator $\mathcal{M}^\ast_\Omega$.

Besides the above two applications, we also obtain a weak-type characterization
of truncated rough singular integrals, which further yields a new characterization of
the Hardy space $H^1(\mathbb{R}^n)$ in terms of truncated rough singular integrals;
see Subsection \ref{s3.3} for details.
Recall that the real-variable theory of the Hardy space
$H^p(\mathbb R^n)$, with $p\in(0,1]$, traces back to Stein and Weiss \cite{sw60},
which was later comprehensively expanded by Fefferman and Stein \cite{fs72}.
Since $H^p(\mathbb R^n)$
serves as a crucial alternative to the Lebesgue space when $p\in(0,1]$
in the context of studying the boundedness of operators,
its real-variable theory has experienced remarkable growth,
demonstrating profound significance in both harmonic analysis
and partial differential equations; see, for example, \cite{c74,clms89,clms93,em94,hs25,mu94,s83}
along with the monographs \cite{lu95,s93}.
For further generalizations regarding diverse variants of
Hardy spaces, we refer to Bui \cite{b14,b26}, Ky \cite{ky14},
Yang et al. \cite{ylk}, Ho \cite{HoKP12,ho15}, Nakai and Sawano \cite{ns12,ns14},
Cruz-Uribe and Wang \cite{cw14}, as well as Sawano et al. \cite{inns23,s17}.

The remainder of this article is organized as follows.

In Section \ref{s2}, we prove Theorem \ref{thm-diag}.
Subsection \ref{s2.1} is devoted to establishing two kinds of dyadic decompositions,
according to the size of $\gamma$, of
given functions based on a class of bad cubes. The target of
Subsection \ref{s2.2} is to prove
a key estimate of some level sets with rough kernels
in terms of given family of dyadic cubes. In Subsection \ref{s2.3},
we show the sufficiency part of Theorem \ref{thm-diag} and, in
Subsection \ref{s2.4}, we aim to prove the necessity part of Theorem \ref{thm-diag}.
We then give some applications of Theorem \ref{thm-diag} in Section \ref{s3}.
In Subsection \ref{s3.1}, using Stein--N. Weiss adding-up lemma,
we obtain the weak-type estimates of a class of integral operators as in
\eqref{eq-io}. Subsection \ref{s3.2} is devoted to establishing
the weak-type estimates \eqref{eq-strong-up-p=1}.
In Subsection \ref{s3.3}, by establishing a new Coltar-type inequality,
we provide a weak-type characterization of
truncated rough singular integrals, which further gives a
characterization of Hardy spaces.

We end this section by introducing several notational conventions.
Let $\mathbb{N}:=\{1,2,\ldots\}$ and
$\mathbb{Z}_+:=\mathbb{N}\cup\{0\}$. The \emph{symbol $C$} always denotes a
positive constant independent of the main parameters involved,
but may vary from line to line. For a set $E \subset \mathbb{R}^n$,
we write $\mathbf{1}_E$ for its \emph{characteristic function}.
Let $V\subset X$ be two sets and we define $V^\complement:=
X\setminus V$. For any $s\in\mathbb R$,
we use $\lceil s \rceil$ to denote
the largest integer not greater than $s$.
The limit $t\to 0^+$ means that there exists
a positive constant $c_0\in(0,\infty)$ such that
$t\in(0,c_0)$ and $t\to 0$.
The \emph{notation $f \lesssim g$} indicates that $f \leq C g$ for some positive constant $C$.
When both $f \lesssim g$ and $g \lesssim f$ hold,
we write $f \sim g$. If $f \leq C g$ and simultaneously $g = h$ or $g \leq h$,
we express this as $f \lesssim g = h$ or $f \lesssim g \leq h$, respectively.
The \emph{cardinality} of a finite set $A$ is denoted by $\sharp A$.
For any bounded set $A\subset\mathbb{R}^n$
with $|A|\in(0,\infty)$ and $f\in L_{\mathrm{loc}}^1$,
let
$$\fint_{A}f(x)\,dx:=\frac{1}{|A|}\int_{A}f(x)\,dx.$$
Finally, in all proofs
we consistently retain the notation introduced in the original theorem (or related statement).

\section{Proof of Theorem \ref{thm-diag}}\label{s2}

In this section, we aim to show Theorem \ref{thm-diag}.
To this end, we first establish the dyadic decompositions of
given functions based on a class of bad cubes in Subsection \ref{s2.1}.
Then, in Subsection \ref{s2.2}, we prove a key estimate of some level sets
with rough kernels in term of
given family of dyadic cubes.
Subsection \ref{s2.3} is devoted to showing the sufficiency part
of Theorem \ref{thm-diag} and we give the proof of
the necessity part of
Theorem \ref{thm-diag} in Subsection \ref{s2.4}.

\subsection{Dyadic Decompositions Based on Bad Cubes}\label{s2.1}
In this subsection, we first introduce a class of bad cubes
related to a given function and then
we establish the dyadic decompositions of functions based on them.

Given
\(D\in\mathbb{R}\) and a nonnegative \(f\in L^1(\mathbb R^n)\),
a dyadic cube \(Q\subset\mathbb R^n\)
is said to be \emph{$(D,f)$-bad} (for short, \emph{bad}) if
\begin{align}\label{2}
\int_Q f(y)\,dy>\ell(Q)^D,
\end{align}
where $\ell(Q)$ denotes the edge length of $Q$.
Let the \emph{symbol \(\mathfrak B:=\mathfrak B(D,f)\)} denote the collection of all
$(D,f)$-bad dyadic cubes.
Note that, if \(Q\in \mathfrak B\), then, by \eqref{2}, we obtain
\[
\ell(Q)^D
<
\int_Q f(y)\,dy
\leq
\|f\|_{L^1(\mathbb R^n)},
\]
which further implies that, when $D>0$, $Q$ is contained in a maximal bad dyadic cube.
Clearly, the maximal bad dyadic
cubes are pairwise disjoint and hence countable.
Moreover, they have the following property.
\begin{lemma}\label{lem-cz-decomposition-positive}
Let $D\in (n,\infty)$ and
$f\in L^1(\mathbb{R}^n)$ be nonnegative.
Let $\mathcal{Q}$ be the collection of all maximal $(D,f)$-bad dyadic cubes.
Then, for almost every $x\in\mathbb R^n$,
\begin{align}\label{4}
f(x)=\sum_{Q\in\mathcal Q}f(x){\mathbf 1}_Q(x).
\end{align}
\end{lemma}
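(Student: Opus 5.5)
Since the maximal bad cubes are pairwise disjoint, $\sum_{Q\in\mathcal Q}f\mathbf 1_Q=f\mathbf 1_{\bigcup_{Q\in\mathcal Q}Q}$. So \eqref{4} is equivalent to showing that $f=0$ almost everywhere on the complement $E:=\mathbb R^n\setminus\bigcup_{Q\in\mathcal Q}Q$. The plan is to use the Lebesgue differentiation theorem along dyadic cubes, combined with the fact that every bad cube sits inside a maximal one (shown just before the lemma, using $D>0$).

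\textbf{Step 1: no point of $E$ lies in a bad cube.} Fix $x\in E$. If some dyadic cube $Q\ni x$ were $(D,f)$-bad, then, as noted right before the lemma, $Q$ would be contained in a maximal bad cube. That maximal cube also contains $x$, contradicting $x\in E$. Hence, for every $k\in\mathbb Z$, the unique dyadic cube $Q_k(x)\ni x$ with $\ell(Q_k(x))=2^{-k}$ satisfies
\[
\int_{Q_k(x)}f(y)\,dy\le 2^{-kD}.
\]

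\textbf{Step 2: the dyadic averages tend to zero.} Dividing the last inequality by $|Q_k(x)|=2^{-kn}$ gives
\[
\fint_{Q_k(x)}f(y)\,dy\le 2^{-k(D-n)}.
\]
Because $D>n$, the right-hand side tends to $0$ as $k\to\infty$. This is the only point where the hypothesis $D\in(n,\infty)$ enters.

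\textbf{Step 3: conclude.} By the dyadic Lebesgue differentiation theorem (valid because dyadic cubes containing $x$ shrink regularly to $x$), for almost every $x\in\mathbb R^n$ we have $\fint_{Q_k(x)}f\to f(x)$ as $k\to\infty$. Combined with Step 2, this gives $f(x)=0$ for almost every $x\in E$, which proves \eqref{4}.

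The main subtlety is Step 1. It relies on every bad cube lying in a maximal one, which holds because $\ell(Q)^D<\|f\|_{L^1(\mathbb R^n)}$ bounds the side lengths of bad cubes when $D>0$. One should also check that ancestors of a cube need not remain bad, so maximality is with respect to the chain of bad ancestors. That chain is bounded in side length, so it has a largest element; the argument needs nothing more than this.
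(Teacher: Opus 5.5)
Your proof is correct and is essentially the paper's argument in contrapositive form. The paper shows that at a Lebesgue point with $f(x)>0$ the dyadic averages stay above $f(x)/2$ while $\ell(Q_k(x))^{D-n}\to0$, so the small cubes $Q_k(x)$ are bad and $x$ lies in a maximal bad cube; you instead show that off $\bigcup_{Q\in\mathcal Q}Q$ the averages are at most $2^{-k(D-n)}$, so $f=0$ a.e. there, using the same ingredients (bad cubes lie in maximal ones when $D>0$, dyadic Lebesgue differentiation, and $D>n$), and your justification via the largest bad ancestor is accurate.
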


\begin{proof}
Let \(x\) be a Lebesgue point of \(f\) with \(f(x)>0\). Let \(Q_k(x)\)
be the dyadic cube containing \(x\) with edge length \(2^{-k}\). Using the
Lebesgue differentiation theorem, we find that, for \(k\in\mathbb N\) sufficiently large,
\[
\fint_{Q_k(x)}f(y)\,dy>\frac12 f(x)>0.
\]
On the other hand, since \(D-n>0\), it follows that
$\ell(Q_k(x))^{D-n}=2^{-k(D-n)}\to0$
as $k\to\infty.$
Therefore, when \(k\in\mathbb N\) is sufficiently large,
\[
\fint_{Q_k(x)}f(y)\,dy
>
\ell(Q_k(x))^{D-n}.
\]
Thus, \(Q_k(x)\) is bad for all sufficiently large \(k\) and hence
\(x\) is contained in a maximal bad dyadic cube. This proves
\begin{align}\label{3}
\left\{x\in\mathbb{R}^n:f(x)>0\ \text{and}\ x\ \text{is a Lebesgue point of}\ f\right\}
\subset \bigcup_{Q\in\mathcal Q}Q.
\end{align}
Moreover, \eqref{4} follows directly from \eqref{3}
and the disjointness of maximal bad dyadic cubes.
This finishes the proof of Lemma
\ref{lem-cz-decomposition-positive}.
\end{proof}

For negative $D$, we cannot use the minimal bad dyadic cubes to
decompose $f$. Instead, we have the following decomposition,
borrowing some ideas from sparse dominations.

\begin{lemma}\label{lem-cz-decomposition-negative}
Let $D\in (-\infty,0)$ and
$f\in L^1(\mathbb{R}^n)$ be nonnegative.
For any \(Q\in\mathfrak B\), define
\[
E_Q
:=
Q\setminus
\bigcup_{\genfrac{}{}{0pt}{}{R\in\mathfrak B}
{R\subsetneqq Q}}R.
\]
Then
\begin{align}\label{EQ}
E_Q
=
\bigcup_{\genfrac{}{}{0pt}{}{Q'\in\operatorname{ch}(Q)}{Q'\notin\mathfrak B}} Q',
\end{align}
where \(\operatorname{ch}(Q)\) denotes the family of dyadic children of \(Q\). Moreover, the
sets \(\{E_Q\}_{Q\in\mathfrak B}\) are pairwise disjoint and, for almost every
$x\in \mathbb R^n$,
\begin{align}\label{fd}
f(x)
=
\sum_{Q\in\mathfrak B}f(x){\mathbf 1}_{E_Q}(x).
\end{align}
\end{lemma}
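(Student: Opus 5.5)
The proof rests on two monotonicity properties of $\mathfrak B$ that hold because $D<0$. I would first establish them and then read off \eqref{EQ}, the disjointness, and \eqref{fd} from them.

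\emph{Step 1 (upward closedness).} Let $Q\in\mathfrak B$ and let $R\supset Q$ be a dyadic cube. Since $f\ge0$ and $\ell(R)\ge\ell(Q)$, and since $s\mapsto s^D$ is nonincreasing for $D<0$, we get
\[
\int_R f(y)\,dy\ \ge\ \int_Q f(y)\,dy\ >\ \ell(Q)^D\ \ge\ \ell(R)^D .
\]
Hence $R\in\mathfrak B$, so every dyadic ancestor of a bad cube is bad.

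\emph{Step 2 (small cubes are good).} If $Q\in\mathfrak B$, then $\ell(Q)^D<\|f\|_{L^1(\mathbb R^n)}<\infty$. Because $\ell(Q)^D\to\infty$ as $\ell(Q)\to0$, bad cubes have edge length bounded below by a positive constant depending on $D$ and $\|f\|_{L^1(\mathbb R^n)}$.

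\emph{Step 3 (proof of \eqref{EQ}).} Fix $Q\in\mathfrak B$ and $x\in Q$, and let $Q'$ be the child of $Q$ containing $x$. Suppose some $R\in\mathfrak B$ with $R\subsetneqq Q$ contains $x$. Then $R\subset Q'$, and $Q'$ is an ancestor of $R$ (or equal to $R$), so $Q'\in\mathfrak B$ by Step 1. Conversely, if $Q'\in\mathfrak B$, then $Q'$ itself is a bad proper subcube of $Q$ containing $x$. Therefore $x\in E_Q$ if and only if $x\in Q'$ for some $Q'\in\operatorname{ch}(Q)$ with $Q'\notin\mathfrak B$, which is exactly \eqref{EQ}.

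\emph{Step 4 (disjointness).} Suppose $x\in E_Q\cap E_R$ with $Q\neq R$ in $\mathfrak B$. Both cubes are dyadic and contain $x$, so they are nested, say $R\subsetneqq Q$. Then $R$ is a bad proper subcube of $Q$ containing $x$, which contradicts $x\in E_Q$.

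\emph{Step 5 (proof of \eqref{fd}).} This is the only step that needs a little care, namely to show that the point actually lies in some bad cube. Let $x$ be a Lebesgue point of $f$ with $f(x)>0$; almost every point of $\{f>0\}$ is such a point. Let $\{Q_k(x)\}_{k\in\mathbb Z}$ be the dyadic cubes containing $x$, where $\ell(Q_k(x))=2^{-k}$. By the Lebesgue differentiation theorem, $\int_{Q_{k_0}(x)}f>0$ for some $k_0$. Monotonicity then gives $\int_{Q_k(x)}f\ge c>0$ for all $k\le k_0$. Since $2^{-kD}\to0$ as $k\to-\infty$, the cube $Q_k(x)$ is bad for all sufficiently negative $k$. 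By Step 2, $Q_k(x)$ is not bad for $k$ large.

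By Step 1, the set of $k$ with $Q_k(x)\in\mathfrak B$ is of the form $(-\infty,k_1]\cap\mathbb Z$ for some finite $k_1$. The cube $Q:=Q_{k_1}(x)$ is therefore the smallest bad cube containing $x$. Its child $Q_{k_1+1}(x)$ contains $x$ and is not bad, so $x\in E_Q$ by \eqref{EQ}. Combined with Step 4, this shows that $\sum_{Q\in\mathfrak B}\mathbf 1_{E_Q}(x)=1$ for almost every $x$ with $f(x)>0$. At points where $f(x)=0$ both sides of \eqref{fd} vanish, so \eqref{fd} holds almost everywhere.
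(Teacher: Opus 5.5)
Your proposal is correct and follows essentially the same route as the paper: upward closedness of $\mathfrak B$ when $D<0$, the characterization of $E_Q$ through the non-bad children of $Q$, nestedness of dyadic cubes for disjointness, and the existence of a minimal bad cube containing each Lebesgue point of $\{f>0\}$. Your description of that minimal cube as $Q_{k_1}(x)$, with $k_1$ the largest index for which $Q_k(x)$ is bad, is if anything a slightly cleaner rendering of the paper's step (ii).
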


\begin{proof}
We claim that the following assertions hold.
\begin{enumerate}[{\rm(i)}]
\item If \(Q\in\mathfrak B\), then its dyadic parent also belongs to
\(\mathfrak B\).

\item For almost every \(x\in\mathbb R^n\) with \(f(x)>0\), there exists
a minimal dyadic cube
\(Q_x\in\mathfrak B\) containing \(x\) such that no strictly
smaller bad dyadic cube containing \(x\) is contained in \(Q_x\).
\end{enumerate}
We first prove (i). Let \(Q\in\mathfrak B\)
and let \(\widehat Q\) be its dyadic parent. Then
\[
\int_{\widehat Q}f(y)\,dy\geq \int_Qf(y)\,dy>\ell(Q)^D.
\]
Since \(D<0\) and \(\ell(\widehat Q)=2\ell(Q)\), it follows that
$\ell(\widehat Q)^D=2^D\ell(Q)^D<\ell(Q)^D.$
Consequently,
\[
\int_{\widehat Q}f(y)\,dy>\ell(\widehat Q)^D
\]
and hence \(\widehat Q\in\mathfrak B\). This shows (i).

We now prove (ii). Let \(x\) be a Lebesgue point of \(f\) with \(f(x)>0\)
and, for any
$k\in\mathbb{Z}$, let \(Q_k(x)\) be the dyadic cube containing \(x\) with edge length
\(2^{-k}\). By the Lebesgue differentiation theorem, we can choose
\(k_0\in\mathbb Z\) such that
$\|f\|_{L^1(Q_{k_0}(x))}>0.$
Since \(D<0\),
it follows that, for all sufficiently small \(k\le k_0\),
\[
\int_{Q_{k}(x)}f(y)\,dy\geq \int_{Q_{k_0}(x)}f(y)\,dy>\ell(Q_k(x))^D;
\]
that is, \(Q_k(x)\in\mathfrak B\) for all sufficiently small \(k\le k_0\).
On the other hand,
by \eqref{2} and $D<0$,
we find that, when \(\ell(Q)\) is sufficiently small,
\[
\int_Qf(y)\,dy\leq \|f\|_{L^1(\mathbb R^n)}<\ell(Q)^D,
\]
which shows that any $Q\in\mathfrak{B}$ contains a minimal
bad dyadic cube and hence the family \(\{Q_k(x)\}_{k\in\mathbb{Z}}
\cap \mathfrak{B}\)
has a minimal
bad descendant. This shows (ii).

We next prove \eqref{EQ}. Let \(Q'\in\operatorname{ch}(Q)\) with
\(Q'\notin\mathfrak B\). If \(Q'\) contains some bad dyadic cube,
then the property (i) proved above implies that
\(Q'\in\mathfrak B\), which contradicts \(Q'\notin\mathfrak B\).
Thus, no bad dyadic subcube of
\(Q\) is contained in \(Q'\) and hence
$Q'\subset E_Q.$
This shows
\[
\bigcup_{\genfrac{}{}{0pt}{}{Q'\in\operatorname{ch}(Q)}{Q'\notin\mathfrak B}} Q'
\subset E_Q.
\]
Conversely, let \(x\in E_Q\). Let \(Q_x'\) be the unique dyadic child of
\(Q\) containing \(x\). If \(Q_x'\in\mathfrak B\), then \(x\) belongs to a
strict bad subcube of \(Q\), which contradicts \(x\in E_Q\).
Hence, \(Q_x'\notin\mathfrak B\) and
\[
x\in Q_x'\subset
\bigcup_{\genfrac{}{}{0pt}{}{Q'\in\operatorname{ch}(Q)}{Q'\notin\mathfrak B}} Q'.
\]
This proves the reverse inclusion and hence \eqref{EQ}.

We now show that the sets \(\{E_Q\}_{Q\in\mathfrak B}\) are pairwise disjoint.
Indeed, if \(Q,R\in\mathfrak B\) with $Q\neq R$, then either they
are disjoint, or one of them is strictly contained in the other. If, for
instance, \(R\subsetneqq Q\), then, by the definition of \(E_Q\), we find that
$E_Q\cap R=\emptyset.$
Since \(E_R\subset R\), we infer that \(E_Q\cap E_R=\emptyset\).
Thus, the sets \(\{E_Q\}_{Q\in\mathfrak B}\) are pairwise
disjoint.

Finally, let \(x\) be a Lebesgue point of \(f\) with \(f(x)>0\). By (ii),
we conclude that
there exists a minimal bad dyadic cube \(Q_x\in\mathfrak B\) containing \(x\),
which further implies that, for any $R\in\mathfrak{B}$ and $R\subsetneqq Q_x$,
$x\notin R$. Consequently, \(x\in E_{Q_x}\). This implies that
\[
\left\{x\in\mathbb R^n:f(x)>0\ \text{and}\ x\ \text{is a Lebesgue point of}\ f\right\}
\subset
\bigcup_{Q\in\mathfrak B}E_Q.
\]
Combining this with the disjointness of the sets \(\{E_Q\}_{Q\in\mathfrak{B}}\), we obtain
\eqref{fd}. This finishes the proof of Lemma
\ref{lem-cz-decomposition-negative}.
\end{proof}

\subsection{A Key Estimate for Dyadic Functionals Involving Rough Kernels}\label{s2.2}

In this subsection, our target is to establish
a key estimate of some level sets in terms of given family of dyadic cubes.
Precisely speaking,
the following lemma is the key ingredient of the proof of Theorem \ref{thm-diag}.
For any $\gamma\in\mathbb{R}$, we write
\begin{align}\label{dv}
d\nu_\gamma(x,t):=t^{\gamma-1}\,dt\,dx.
\end{align}
\begin{lemma}\label{lem-main}
Let \(D:=n+\gamma\) with \(\gamma\in\mathbb{R}\),
$\Omega\in L(\log L)(\mathbb S^{n-1})$ be
nonnegative, and
\(\mathcal Q\) be a family of dyadic cubes.
For each \(Q\in\mathcal Q\), let \(\mu_Q\) be a measure supported
in \(Q\) such that
\begin{align}\label{9}
\mu_Q(Q)\leq \ell(Q)^D.
\end{align}
Then the following statements hold.
\begin{enumerate}[{\rm (i)}]
\item If $\gamma\in(-\infty,-n)$, then there exists a positive constant $C$, depending only on $n$,
such that, for any \(\beta\in[1,\infty)\),
\begin{align}\label{e5.1}
\nu_\gamma\left(\left\{(x,t)\in\mathbb{R}^{n+1}_+:\ H_{\rm large}(x,t)>C\beta\right\}\right)
\lesssim
\frac{|\Omega|_{L(\log L)(\mathbb S^{n-1})}}{\beta}
\sum_{Q\in\mathcal Q}\ell(Q)^D,
\end{align}
where
\[
H_{\rm large}(x,t)
:=
\sum_{\{Q\in\mathcal Q:\,t\le \ell(Q)\}}
t^{-D}
\int_{Q\cap B(x,2t)}
\Omega\left(\frac{x-y}{|x-y|}\right)
\,d\mu_Q(y)
\]
and the implicit positive constant depends only on $n$.

\item If the dyadic cubes in $\mathcal Q$ are pairwise disjoint and $\gamma\in(0,\infty)$,
then there exists a positive constant $C$, depending only on $n$,
such that, for any \(\beta\in[1,\infty)\),
\begin{align}\label{7}
\nu_\gamma\left(\left\{(x,t)\in\mathbb{R}^{n+1}_+:\ H_{\rm small}(x,t)>C\beta\right\}\right)
\lesssim
\frac{|\Omega|_{L(\log L)(\mathbb S^{n-1})}}{\beta}
\sum_{Q\in\mathcal Q}\ell(Q)^D,
\end{align}
where \[
H_{\rm small}(x,t)
:=
t^{-D}\sum_{\{Q\in\mathcal Q:\,\ell(Q)\le t\}}
\int_{Q\cap B(x,2t)}
\Omega\left(\frac{x-y}{|x-y|}\right)
\,d\mu_Q(y)
\]
and the implicit positive constant depends only on $n$.
\end{enumerate}
\end{lemma}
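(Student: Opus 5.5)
The plan is to combine a height decomposition of $\Omega$ with a scale-by-scale splitting. Each piece will be controlled either pointwise, which removes it from the level set once $C$ is large, or in $L^1(\nu_\gamma)$ via Chebyshev's inequality. The reason for this split is that a direct Chebyshev argument fails only logarithmically. Indeed, for one cube $Q$ and any nonnegative $\Omega$, Fubini and $|B(y,2t)|\sim t^n$ give
\begin{align*}
\int_{\mathbb R^n}t^{-D}\int_{Q\cap B(x,2t)}\Omega\left(\frac{x-y}{|x-y|}\right)d\mu_Q(y)\,dx\,t^{\gamma-1}
\sim \|\Omega\|_{L^1(\mathbb S^{n-1})}\,\mu_Q(Q)\,t^{-1},
\end{align*}
since $-D+\gamma+n=0$. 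So each dyadic range $t\sim 2^{-j}\ell(Q)$ in case (i), or $\ell(Q)\sim 2^{-j}t$ in case (ii), contributes $\lesssim\|\Omega\|_{L^1(\mathbb S^{n-1})}\mu_Q(Q)\le\|\Omega\|_{L^1(\mathbb S^{n-1})}\ell(Q)^D$, by \eqref{9}. Summing over all $j$ diverges. The $L(\log L)$ hypothesis will pay for only about $\log(2^k/\beta)$ scales of the level-$2^k$ piece of $\Omega$.

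First write $\Omega=\sum_{k\ge0}\Omega_k$, where $\Omega_0:=\Omega\mathbf 1_{\{\Omega\le 1\}}$ and $\Omega_k:=\Omega\mathbf 1_{\{2^{k-1}<\Omega\le 2^k\}}$ for $k\in\mathbb N$. These satisfy $\|\Omega_k\|_{L^\infty}\le 2^k$ and $\sum_k (1+k)\|\Omega_k\|_{L^1(\mathbb S^{n-1})}\lesssim|\Omega|_{L(\log L)(\mathbb S^{n-1})}$. Next comes the pointwise bound at a fixed scale. Set $\delta:=|D|=-D>0$ in case (i) and $\delta:=D-n=\gamma>0$ in case (ii).

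In case (i), fix $(x,t)$. For each $j\ge0$, only $O(1)$ cubes $Q$ with $\ell(Q)=2^jt$ meet $B(x,2t)$. Each such term with $\Omega_k$ is at most $2^kt^{-D}\ell(Q)^D=2^k2^{-j\delta}$.

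In case (ii), fix $(x,t)$ and $j\ge0$. The cubes with $\ell(Q)=2^{-j}t$ meeting $B(x,2t)$ are pairwise disjoint and lie in $B(x,3t)$. Since $\ell(Q)^D=\ell(Q)^n(2^{-j}t)^{D-n}$ and $\sum\ell(Q)^n\lesssim t^n$, their total $\Omega_k$-contribution is $\lesssim 2^kt^{-D}t^n(2^{-j}t)^{D-n}=2^k2^{-j\delta}$.

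In both cases, write $H^{(k,j)}(x,t)$ for the sum of the $\Omega_k$-contributions of the cubes of relative scale $j$. Then $H=\sum_{k,j}H^{(k,j)}$ and $H^{(k,j)}\lesssim 2^{k-j\delta}$ pointwise, with a constant depending only on $n$.

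Now set $j_k:=\max\{0,\lceil(k-\log_2\beta)/\delta\rceil\}$ and split $H=H_{\rm I}+H_{\rm II}$. Here $H_{\rm II}:=\sum_k\sum_{j\ge j_k}H^{(k,j)}$ and $H_{\rm I}:=\sum_k\sum_{j<j_k}H^{(k,j)}$. For $H_{\rm II}$, the pointwise bound gives $\sum_k\sum_{j\ge j_k}2^{k-j\delta}$. For $k\le\log_2\beta$ this is $\lesssim\sum_{k\le\log_2\beta}2^k\lesssim\beta$. For $k>\log_2\beta$ it is $\lesssim\sum_k 2^{k-j_k\delta}$, where $2^{k-j_k\delta}\le\beta$ and the terms decay geometrically in $k$ once the ceiling is accounted for; grouping the $k$ by the value of $j_k$ gives at most $\lceil\delta\rceil$ terms per $j_k$. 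Hence $H_{\rm II}\le C\beta$ everywhere, for a suitable $C$ depending on $n$ (and on $\delta$, whose dependence I would absorb or track). Then $\{H>2C\beta\}\subset\{H_{\rm I}>C\beta\}$.

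For $H_{\rm I}$, apply Chebyshev with the $L^1(\nu_\gamma)$ computation above, applied to $\Omega_k$ at each single scale:
\begin{align*}
\nu_\gamma(\{H_{\rm I}>C\beta\})\le\frac1{C\beta}\sum_{Q\in\mathcal Q}\sum_k j_k\|\Omega_k\|_{L^1(\mathbb S^{n-1})}\ell(Q)^D
\lesssim\frac{|\Omega|_{L(\log L)(\mathbb S^{n-1})}}{\beta}\sum_{Q\in\mathcal Q}\ell(Q)^D,
\end{align*}
because $j_k\lesssim 1+k$. Note that the $L^1$ computation is per cube, so disjointness is not needed here, and in case (i) the restriction $t\le\ell(Q)$ just means $j\ge0$.

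I expect the main obstacle to be making the pointwise summation in $H_{\rm II}$ uniform: the constants must depend only on $n$ (through $\delta$, this is delicate when $\delta$ is small), and in case (i) the counting of nested cubes of all sizes $\ge t$ near $x$ must be handled without disjointness. The remedy I would try first is to note that in case (i) at most $2^n$ cubes of each fixed side length meet $B(x,2t)$, so the geometric factor $2^{-j\delta}$ alone controls the sum over scales.
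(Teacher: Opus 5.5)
Your pointwise bounds $H^{(k,j)}\lesssim 2^{k-j\delta}$, the per-scale $L^1(\nu_\gamma)$ computation and the Chebyshev step for $H_{\rm I}$ are fine. The gap is in the step you flagged: with your choice of $j_k$, the bound $H_{\rm II}\le C\beta$ fails. For $k>\log_2\beta$, $j_k$ is the smallest integer with $j_k\delta\ge k-\log_2\beta$, so $2^{-\delta}\beta<2^{k-j_k\delta}\le\beta$ for \emph{every} such $k$. There is no geometric decay. Grouping by the value of $j_k$ only shows that each block of about $\delta$ consecutive $k$'s contributes about $\beta$, so your majorant $\sum_k\sum_{j\ge j_k}2^{k-j\delta}$ is $+\infty$. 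Your cut-off sits exactly at the critical balance $2^{k}2^{-j_k\delta}\approx\beta$: for fixed $j$ the admissible $k$ are those with $2^k\le\beta2^{j\delta}$, so every scale $j$ may contribute $\sim\beta$.

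In case (i) this really happens, so the claim is false and not merely unproved. Take $\mathcal Q=\{[0,2^j)^n\}_{j\ge0}$, $t=1$, and $x$ the centre of $[0,1)^n$. Take an unbounded $\Omega\in L(\log L)(\mathbb S^{n-1})$ whose level sets $\{2^{k-1}<\Omega\le2^k\}$ all have positive measure. Let $k_j$ be the largest integer with $2^{k_j}\le\beta2^{j\delta}$, and let $\mu_{[0,2^j)^n}$ have mass $2^{jD}$ spread over the $y\in B(x,\frac12)$ with $\Omega(\frac{x-y}{|x-y|})\in(2^{k_j-1},2^{k_j}]$. Each $j$ then contributes more than $\beta/4$ to $H_{\rm II}(x,1)$, which is therefore infinite. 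In case (ii) your split can be rescued, but only by using disjointness across all scales at once. Since $\sum_{k:\,j_k\le j}\Omega_k\le\beta2^{j\delta}$, the scale-$j$ part of $H_{\rm II}$ is at most $\beta t^{-n}\sum|Q|$ over the cubes of that scale, and the total volume of all relevant cubes is $\lesssim t^n$. That is a different argument from the one you wrote.

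The missing idea is to cut the scales at a slope strictly larger than the critical one. This costs nothing, because only the linear growth $j_k\lesssim 1+k$ enters the Chebyshev part. The paper measures heights relative to $\beta$. The piece $\Omega\mathbf 1_{\{\Omega\le\beta\}}$ is handled pointwise; it contributes $\lesssim\beta$ because the sum of $\ell(Q)^D$ over the relevant cubes is $\lesssim t^D$, by $D<0$ in (i) and by packing in (ii). The $k$-th piece lives on $\{2^k\beta<\Omega\le2^{k+1}\beta\}$, and the cut is at $J_k\approx 2(k+1)/\delta$. The tail of the $k$-th piece is then of order $2^{-k}\beta$ and sums in $k$ with no structural input.

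In your absolute-height setup the same repair works verbatim. Take $j_k$ to be the smallest integer $\ge 2k/\delta$; then $H_{\rm II}\lesssim(1-2^{-\delta})^{-1}\sum_k 2^{-k}\lesssim 1\le\beta$. Since $j_k\le 1+2k/\delta$, your estimate $\nu_\gamma(\{H_{\rm I}>C\beta\})\lesssim\beta^{-1}|\Omega|_{L(\log L)(\mathbb S^{n-1})}\sum_{Q\in\mathcal Q}\ell(Q)^D$ survives unchanged. The constants then depend on $\delta$, i.e.\ on $\gamma$. The paper's argument has the same dependence (through $1-2^{-\delta}$ and $J_k$), so that is not an additional defect.
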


\begin{proof}
We first prove (i). Fix $(x,t)\in\mathbb{R}^{n+1}_+$.
Note that
\begin{align}\label{e5.2}
\sum_{\genfrac{}{}{0pt}{}{\{Q\in\mathcal Q:\,t\le\ell(Q)}{Q\cap B(x,2t)\neq\emptyset\}}}
\ell(Q)^D&=\sum_{j=0}^\infty \sum_{\genfrac{}{}{0pt}{}{\{Q\in\mathcal Q:\, 2^j t\leq \ell(Q)<2^{j+1}t}
{Q\cap B(x,2t)\neq\emptyset\}}}
\ell(Q)^D\nonumber\\
&\le\sum_{j=0}^\infty 2^{jD}\sum_{\genfrac{}{}{0pt}{}{\{Q\in\mathcal Q:\,2^j t\leq \ell(Q)<2^{j+1}t}
{Q\cap B(x,2t)\neq\emptyset\}}}
t^D\sim t^D,
\end{align}
where the last step used \(D<0\) and $\sharp\{Q\in\mathcal Q:\, 2^j t\leq \ell(Q)<2^{j+1}t,
Q\cap B(x,2t)\neq\emptyset\}\sim 1$.

Now, fix \(\beta\geq1\). Decompose the function $\Omega$ as
\begin{align}\label{lower-h}
\Omega_{-1}:=\Omega{\mathbf 1}_{\{\Omega\leq\beta\}},
\end{align}
and, for any \(k\in\mathbb Z_+\),
\begin{align}\label{high-h}
\Omega_k
:=
\Omega{\mathbf 1}_{\{2^k\beta<\Omega\leq 2^{k+1}\beta\}}.
\end{align}

For the lower-height part of $\Omega$,
let \(H_{\Omega_{-1}}\) be the part of \(H_{\rm large}\) with \(\Omega\) replaced by \(\Omega_{-1}\).
Since \(\Omega_{-1}\leq \beta\),
from \eqref{9} and \eqref{e5.2}, it follows that
\begin{align}\label{e5.3}
H_{\Omega_{-1}}(x,t)
&\leq
\beta t^{-D}
\sum_{\genfrac{}{}{0pt}{}{\{Q\in\mathcal Q:\, t\leq \ell(Q)}{Q\cap B(x,2t)\neq\emptyset\}}}
\mu_Q(Q)
\nonumber\\
&\leq
\beta t^{-D}
\sum_{\genfrac{}{}{0pt}{}{\{Q\in\mathcal Q:\, t\leq \ell(Q)}{Q\cap B(x,2t)\neq\emptyset\}}}
\ell(Q)^D\lesssim
\beta t^{-D}t^D=
\beta.
\end{align}

We now consider the high-height part of $\Omega$. For any \(k,j\in\mathbb Z_+\), define
\[
H_{k,j}(x,t)
:=
\sum_{\genfrac{}{}{0pt}{}{\{Q\in\mathcal Q:}{2^j t\leq \ell(Q)<2^{j+1}t\}}}
t^{-D}
\int_{Q\cap B(x,2t)}
\Omega_k\left(\frac{x-y}{|x-y|}\right)
\,d\mu_Q(y).
\]
By \eqref{9} and \eqref{e5.2} again, we find that, for any \(k,j\in\mathbb Z_+\),
\begin{align}\label{e5.4}
H_{k,j}(x,t)
&\leq
2^{k+1}\beta\,t^{-D}
\sum_{\genfrac{}{}{0pt}{}{\{Q\in\mathcal Q:\, 2^jt\leq \ell(Q)<2^{j+1}t}{Q\cap B(x,2t)\neq\emptyset\}}}
\mu_Q(Q)
\nonumber\\
&\leq
2^{k+1}\beta\,t^{-D}
\sum_{\genfrac{}{}{0pt}{}{\{Q\in\mathcal Q:\, 2^jt\leq \ell(Q)<2^{j+1}t}{Q\cap B(x,2t)\neq\emptyset\}}}
\ell(Q)^D
\nonumber\\
&\lesssim
2^k\beta\,t^{-D}(2^jt)^D=
2^k\beta\,2^{jD}.
\end{align}
For any $k\in\mathbb Z_+$, choose
$J_k:=\left\lceil \frac{2(k+1)}{-D}\right\rceil .$
Then \eqref{e5.4} implies
\begin{align}\label{e5.5}
\sum_{k=0}^\infty\sum_{j>J_k}H_{k,j}(x,t)
\lesssim
\beta
\sum_{k=0}^\infty
2^k 2^{J_kD}
\lesssim
\beta.
\end{align}

Therefore, we only need to consider the finite part
\[
N(x,t)
:=
\sum_{k=0}^\infty
\sum_{0\leq j\leq J_k}
H_{k,j}(x,t).
\]
Combining this with \eqref{e5.3} and \eqref{e5.5}, we have
$H(x,t)\leq C\beta+N(x,t),$
where the positive constant $C$ depends only on $n$.
Hence, by Chebyshev's inequality, we conclude that
\begin{align}\label{eq-cc}
\nu_\gamma\left(\left\{H>2C\beta\right\}\right)
\leq
\nu_\gamma\left(\left\{N>C\beta\right\}\right)\lesssim
\frac1\beta
\iint_{\mathbb{R}^{n+1}_+} N(x,t)\,d\nu_\gamma(x,t).
\end{align}
Consequently, it remains to estimate the $L^1(\mathbb R^{n+1}_+,\nu_\gamma)$-norm of
$N$. To this end, fix \(k,j\in\mathbb{Z}_+\). By Tonelli's theorem, we obtain
\begin{align}\label{13}
&\iint_{\mathbb{R}^{n+1}_+} H_{k,j}(x,t)\,d\nu_\gamma(x,t)\nonumber\\
&\quad\leq
\sum_{Q\in\mathcal Q}
\int_Q
\int_{2^{-j-1}\ell(Q)}^{2^{-j}\ell(Q)}
t^{-D+\gamma-1}
\int_{|x-y|<2t}
\Omega_k\left(\frac{x-y}{|x-y|}\right)
\,dx\,dt\,d\mu_Q(y).
\end{align}
Using polar coordinates, we find that
\[
\int_{|x-y|<2t}
\Omega_k\left(\frac{x-y}{|x-y|}\right)
\,dx
\lesssim
t^n
\|\Omega_k\|_{L^1(\mathbb S^{n-1})},
\]
which, combined with \eqref{9} and \eqref{13}, further implies that
\begin{align}\label{14}
\iint_{\mathbb{R}^{n+1}_+} H_{k,j}(x,t)\,d\nu_\gamma(x,t)
\lesssim
\|\Omega_k\|_{L^1(\mathbb S^{n-1})}
\sum_{Q\in\mathcal Q}\mu_Q(Q)
\lesssim
\|\Omega_k\|_{L^1(\mathbb S^{n-1})}
\sum_{Q\in\mathcal Q}\ell(Q)^D.
\end{align}
Summing \eqref{14} over \(0\leq j\leq J_k\) and
\(k\geq0\) yields that
\begin{align*}
\iint_{\mathbb{R}^{n+1}_+} N(x,t)\,d\nu_\gamma(x,t)
&\lesssim
\sum_{k=0}^\infty
(J_k+1)\|\Omega_k\|_{L^1(\mathbb S^{n-1})}
\sum_{Q\in\mathcal Q}\ell(Q)^D
\\
&\lesssim
\sum_{k=0}^\infty
(k+1)\|\Omega_k\|_{L^1(\mathbb S^{n-1})}
\sum_{Q\in\mathcal Q}\ell(Q)^D.
\end{align*}
Since \(\beta\geq1\) on the support of \(\Omega_k\), we infer that
$ 2^k\leq \Omega$ and hence
$k+1\lesssim 1+\log^+\Omega.$
Thus,
\[
\sum_{k=0}^\infty (k+1)\|\Omega_k\|_{L^1(\mathbb S^{n-1})}
\lesssim
\int_{\mathbb S^{n-1}}
\Omega(\theta)
\bigl(1+\log^+\Omega(\theta)\bigr)
\,d\sigma(\theta)
=
|\Omega|_{L(\log L)(\mathbb{S}^{n-1})}.
\]
Consequently,
\begin{align}\label{15}
\iint_{\mathbb{R}^{n+1}_+} N(x,t)\,d\nu_\gamma(x,t)
\lesssim
|\Omega|_{L(\log L)(\mathbb{S}^{n-1})}
\sum_{Q\in\mathcal Q}\ell(Q)^D.
\end{align}
Substituting this into \eqref{eq-cc}, we obtain \eqref{e5.1} and hence
finish the proof of (i).

The proof of (ii) is similar to the one of (i) and hence
we only highlight the difference from (i).

First, we note that, given \((x,t)\in \mathbb{R}^{n+1}_+\) and $\alpha\in(0,1]$,
\begin{align}\label{8}
\sum_{\genfrac{}{}{0pt}{}{\{Q\in\mathcal Q:\, \ell(Q)\leq \alpha t}{Q\cap B(x,2t)\neq\emptyset\}}}
\ell(Q)^D
\lesssim
\alpha^\gamma t^D,
\end{align}
where the implicit positive constant depends only on $n$.
Indeed, given $Q\in\mathcal{Q}$,
if \(Q\cap B(x,2t)\neq\emptyset\) and \(\ell(Q)\leq \alpha t\), then
$Q\subset B(x,3t).$
Since the cubes in \(\mathcal Q\) are pairwise disjoint, it follows that
\[
\sum_{\genfrac{}{}{0pt}{}{\{Q\in\mathcal{Q}:\, \ell(Q)\leq\alpha t}{Q\cap B(x,2t)\neq\emptyset\}}}
|Q|
\leq
|B(x,3t)|
\lesssim t^n.
\]
Therefore,
\[
\begin{aligned}
\sum_{\genfrac{}{}{0pt}{}{\{Q\in\mathcal{Q}:\,\ell(Q)\leq\alpha t}{Q\cap B(x,2t)\neq\emptyset\}}}
\ell(Q)^D
&=
\sum_{\genfrac{}{}{0pt}{}{\{Q\in\mathcal{Q}:\, \ell(Q)\leq\alpha t}{Q\cap B(x,2t)\neq\emptyset\}}}
\ell(Q)^\gamma |Q|
\leq
(\alpha t)^\gamma
\sum_{\genfrac{}{}{0pt}{}{\{Q\in\mathcal{Q}:\, \ell(Q)\leq\alpha t}{Q\cap B(x,2t)\neq\emptyset\}}}
|Q|\lesssim \alpha^\gamma
t^\gamma t^n
=
\alpha^\gamma t^D.
\end{aligned}
\]
This proves \eqref{8}.

Fix \(\beta\geq1\). Decompose the kernel $\Omega$ as in
\eqref{lower-h} and \eqref{high-h}.
For the lower-height part of $\Omega$,
let \(H_{\Omega_{-1}}\) be the part of \(H_{\rm small}\) with \(\Omega\) replaced by \(\Omega_{-1}\).
Since \(\Omega_{-1}\leq\beta\), from \eqref{9} and \eqref{8},
we deduce that, for any $(x,t)\in \mathbb{R}^{n+1}_+$,
\begin{align}\label{10}
H_{\Omega_{-1}}(x,t)
\leq
\beta t^{-D}
\sum_{\genfrac{}{}{0pt}{}{\{Q\in\mathcal{Q}:\, \ell(Q)\leq t}{Q\cap B(x,2t)\neq\emptyset\}}}
\mu_Q(Q)
\leq
\beta t^{-D}
\sum_{\genfrac{}{}{0pt}{}{\{Q\in\mathcal{Q}:\, \ell(Q)\leq t}{Q\cap B(x,2t)\neq\emptyset\}}}
\ell(Q)^D
\lesssim
\beta t^{-D}t^D
\lesssim
\beta.
\end{align}
For the higher-hight part of $\Omega$, given \(k,j\in\mathbb{Z}_+\) and
$(x,t)\in \mathbb{R}^{n+1}_+$, define
\[
H_{k,j}(x,t)
:=t^{-D}
\sum_{\genfrac{}{}{0pt}{}{\{Q\in\mathcal Q:}{2^{-j-1}t<\ell(Q)\leq2^{-j}t\}}}
\int_{Q\cap B(x,2t)}
\Omega_k\left(\frac{x-y}{|x-y|}\right)
\,d\mu_Q(y).
\]
Since $\Omega_k\leq2^{k+1}\beta,$ from \eqref{9} and \eqref{8},
it follows that, for any $(x,t)\in \mathbb{R}^{n+1}_+$,
\begin{align}\label{11}
H_{k,j}(x,t)
&\leq
2^{k+1}\beta\, t^{-D}
\sum_{\genfrac{}{}{0pt}{}{\{Q\in\mathcal{Q}:\, 2^{-j-1}t<\ell(Q)\leq2^{-j}t,}{Q\cap B(x,2t)\neq\emptyset\}}}
\mu_Q(Q)\nonumber\\
&\leq
2^{k+1}\beta\, t^{-D}
\sum_{\genfrac{}{}{0pt}{}{\{Q\in\mathcal{Q}:\, 2^{-j-1}t<\ell(Q)\leq2^{-j}t,}{Q\cap B(x,2t)\neq\emptyset\}}}
\ell(Q)^D\nonumber\\
&\lesssim 2^{k+1}\beta\, t^{-D}2^{-j\gamma} t^D\sim2^k\beta\,2^{-j\gamma}.
\end{align}
For any $k\in\mathbb Z_+$, choose
$J_k:=\left\lceil \frac{2(k+1)}{\gamma}\right\rceil.$
Then \eqref{11} yields that
\begin{align}\label{12}
\sum_{k=0}^\infty\sum_{j>J_k}H_{k,j}(x,t)
\lesssim
\beta
\sum_{k=0}^\infty
2^k
\sum_{j>J_k}2^{-j\gamma}
\lesssim
\beta
\sum_{k=0}^\infty
2^k2^{-J_k\gamma}
\lesssim
\beta.
\end{align}
Hence the only remaining part is the sum
\[
N:=
\sum_{k=0}^\infty
\sum_{0\leq j\leq J_k}
H_{k,j}.
\]
Repeating the proof of \eqref{15} and combining
\eqref{10} and \eqref{12}, we then obtain the desired
estimate \eqref{7}.
This finishes the proof of Lemma \ref{lem-main}.
\end{proof}

\subsection{Sufficiency of Theorem \ref{thm-diag}}\label{s2.3}

In this subsection, we aim to show the sufficiency of Theorem \ref{thm-diag}.

Given \(\Omega\in L^1(\mathbb S^{n-1})\) and \(f\in L^1(\mathbb R^n)\), define
$\mathcal M^\Omega(f):=\mathcal M^\Omega_1(f)$;
that is, for any $(x,t)\in
\mathbb R^{n+1}_+$,
\begin{align}\label{eq-M}
\mathcal M^\Omega(f)(x,t)
:=
\sup_{r\in[t,2t)}
\frac1{|B(x,r)|}
\int_{B(x,r)}
|f(y)|
\left|
\Omega\left(\frac{x-y}{|x-y|}\right)
\right|
\,dy.
\end{align}
Given $\theta\in(0,\infty)$ and
$f\in L^1_{\rm loc}(\mathbb R^{n})$, by the definition of $\mathcal{M}_\theta^\Omega$,
we find that, for any $(x,t)\in\mathbb R^{n+1}_+$,
$\mathcal{M}_\theta^\Omega(f)(x,t)=\mathcal{M}^\Omega(f)(x,\theta t)$ and hence
a change of variables yields that, for any $p\in[1,\infty)$ and $\gamma\in \mathbb R$,
\begin{align*}
\left\| \frac{\mathcal{M}^\Omega_\theta(f)(x,t)}{t^\frac{\gamma}{p}}\right\|_{L^{p,\infty}
(\mathbb R^{n+1}_+,\nu_\gamma)}
:=\,&\sup_{\lambda\in(0,\infty)}\lambda
\left[\nu_\gamma\left(
\left\{(x,t)\in \mathbb R^{n+1}_+:
\frac{\mathcal{M}^\Omega_\theta(f)(x,t)}{t^\frac{\gamma}{p}}>\lambda \right\}
\right)\right]^{\frac1p}\\
=\,& \left\|
\frac{\mathcal{M}^\Omega(f)(x,t)}{t^\frac{\gamma}{p}}\right\|_{L^{p,\infty}
(\mathbb R^{n+1}_+,\nu_\gamma)},
\end{align*}
where $\nu_\gamma$ is as in \eqref{dv}.
Thus, we only need to consider $\mathcal{M}^\Omega$.
The proof of the lower estimates are relatively direct, which follows from
the following proposition.
\begin{proposition}\label{thm-lower}
Let \(n\in\mathbb{N}\cap[2,\infty)\), $p\in[1,\infty)$,
\(\gamma\in\mathbb R\setminus\{0\}\), and
\(\Omega\in L^1(\mathbb S^{n-1})\) with \(\Omega\not\equiv0\). Then, for any
\(f\in L^p(\mathbb R^n)\),
\begin{align*}
\liminf_{\lambda\to L_\gamma}
\lambda^p
\underset{\mathcal M^\Omega(f)(x,t)>\lambda t^{\frac{\gamma}{p}}}
{\int_{\mathbb R^n}\int_0^\infty}
t^{\gamma-1}\,dt\,dx
\geq
\frac{c_\Omega^p}{|\gamma|}
\|f\|_{L^p(\mathbb R^n)}^p,
\end{align*}
where $L_\gamma:=0$ for any $\gamma\in(-\infty,0)$ and $L_\gamma:=\infty$
for any $\gamma\in(0,\infty)$ and where
\begin{align*}
c_\Omega
:=
\fint_{B({\bf 0},1)}
\left|
\Omega\left(\frac{z}{|z|}\right)
\right|
\,dz>0
\end{align*}
with ${\bf 0}$ denoting the origin of $\mathbb R^n$.
\end{proposition}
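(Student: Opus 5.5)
The idea is that as $\lambda\to L_\gamma$ the relevant scales $t$ go to $0$, where $\mathcal M^\Omega(f)(x,t)$ behaves like $c_\Omega|f(x)|$ at Lebesgue points; the level-set integral then reduces to an explicit one-dimensional computation.

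\emph{Step 1: pointwise lower bound.} Taking $r=t$ in \eqref{eq-M} gives
\[
\mathcal M^\Omega(f)(x,t)\ge \fint_{B(x,t)}|f(y)|\left|\Omega\left(\frac{x-y}{|x-y|}\right)\right|dy.
\]
The claim is that for a.e.\ $x$ (Lebesgue points of $f$),
\[
\liminf_{t\to0^+}\mathcal M^\Omega(f)(x,t)\ge c_\Omega|f(x)|.
\]
Write $|f(y)|\ge |f(x)|-\big||f(y)|-|f(x)|\big|$. The main term is exactly $|f(x)|\fint_{B(\mathbf 0,t)}|\Omega(z/|z|)|\,dz=c_\Omega|f(x)|$, by homogeneity. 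The error term is
\[
\fint_{B(x,t)}\big||f(y)|-|f(x)|\big|\,|\Omega|\,dy .
\]
This is the delicate point: $\Omega$ is only $L^1$, so Lebesgue points of $f$ do not directly control it. I would avoid the issue by truncating. Replace $|\Omega|$ by $\min\{|\Omega|,N\}$, which only decreases the operator. The error term is then at most $N\fint_{B(x,t)}\big||f(y)|-|f(x)|\big|dy\to0$, so the liminf is at least $c_{\Omega,N}|f(x)|$, where $c_{\Omega,N}$ is the constant computed with $\min\{|\Omega|,N\}$. Since $c_{\Omega,N}\uparrow c_\Omega$ by monotone convergence, it suffices to prove the proposition with $c_{\Omega,N}$ and let $N\to\infty$ at the end.

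\emph{Step 2: $\gamma>0$, $\lambda\to\infty$.} Fix $\epsilon\in(0,1)$. For a.e.\ $x$ with $f(x)\neq0$ there is $\delta(x)>0$ such that $\mathcal M^\Omega(f)(x,t)>(1-\epsilon)c_{\Omega,N}|f(x)|$ for $t<\delta(x)$. For such $t$, the condition $(1-\epsilon)c_{\Omega,N}|f(x)|>\lambda t^{\gamma/p}$ holds whenever $t<T_\lambda(x):=[(1-\epsilon)c_{\Omega,N}|f(x)|/\lambda]^{p/\gamma}$. Hence
\[
\lambda^p\int_{\{t:\,\mathcal M^\Omega(f)(x,t)>\lambda t^{\gamma/p}\}}t^{\gamma-1}dt
\ge \lambda^p\,\frac{\min\{T_\lambda(x),\delta(x)\}^\gamma}{\gamma}.
\]
As $\lambda\to\infty$, $T_\lambda(x)\to0$, so eventually the minimum equals $T_\lambda(x)$ and the right-hand side becomes $(1-\epsilon)^p c_{\Omega,N}^p|f(x)|^p/\gamma$. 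Fatou's lemma in $x$ then gives the liminf bound $(1-\epsilon)^pc_{\Omega,N}^p\|f\|_{L^p}^p/\gamma$; let $\epsilon\to0$ and $N\to\infty$.

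\emph{Step 3: $\gamma<0$, $\lambda\to0$.} Now $t^{\gamma/p}\to\infty$ as $t\to0$, so the small-$t$ region gives nothing and I would use bounded $t$ instead. The condition $\lambda t^{\gamma/p}<(1-\epsilon)c|f(x)|$ means $t>T_\lambda(x)$, and $T_\lambda(x)\to0$ as $\lambda\to0$. Restricting to $t\in(T_\lambda(x),\delta(x))$ gives
\[
\lambda^p\int_{T_\lambda}^{\delta}t^{\gamma-1}dt=\frac{\lambda^p}{|\gamma|}\left(T_\lambda^{\gamma}-\delta^\gamma\right)\to\frac{(1-\epsilon)^pc^p|f(x)|^p}{|\gamma|},
\]
since $\lambda^pT_\lambda^\gamma=(1-\epsilon)^pc^p|f(x)|^p$ and $\lambda^p\delta^\gamma\to0$. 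Fatou's lemma concludes as before.

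The main obstacle is Step 1, namely the failure of Lebesgue-point control against an unbounded $\Omega$; the truncation $\min\{|\Omega|,N\}$ handles it. The rest is a routine combination of the explicit $t$-integral and Fatou's lemma.
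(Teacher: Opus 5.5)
Your proof is correct, but it takes a different route from the paper's. The paper also starts from the choice $r=t$, i.e.\ $A_tf\le\mathcal M^\Omega(f)(\cdot,t)$. It then only uses that $A_t$ is an approximation of the identity with the integrable kernel $|\Omega(z/|z|)|\mathbf 1_{B(\mathbf 0,1)}(z)/|B(\mathbf 0,1)|$, so that $A_tf\to c_\Omega f$ in $L^p(\mathbb R^n)$. The passage from this convergence to the level-set integral is handed to \cite[Lemma 3.3]{dlyyz26}. Norm convergence suffices for that step: after the substitution $t=\lambda^{-p/\gamma}s$, one needs only convergence in measure for each fixed $s$ and Fatou's lemma in $s$. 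Since $L^p$ convergence of approximate identities holds for every $L^1$ kernel, the paper never has to truncate $\Omega$.

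You work pointwise instead. A.e.\ convergence $A_tf\to c_\Omega f$ is not available from standard maximal-function bounds when $p=1$ and $\Omega$ is merely in $L^1(\mathbb S^{n-1})$. You correctly observe that only a one-sided pointwise bound is needed. You obtain it at Lebesgue points after replacing $|\Omega|$ by $\min\{|\Omega|,N\}$, compute the $t$-integral explicitly at each $x$, apply Fatou in $x$, and recover $c_\Omega$ by monotone convergence as $N\to\infty$. Your argument is self-contained and elementary, and it makes the origin of the constant $c_\Omega^p/|\gamma|$ transparent. The paper's is shorter but rests on the external lemma.
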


\begin{proof}
Fix $f\in L^p(\mathbb R^n)$. Observe that, for any \((x,t)\in\mathbb R^{n+1}_+\),
\begin{align}\label{eq-At-less-M}
A_t f(x)
:=
\fint_{B(x,t)}
f(y)\,
\left|\Omega\left(\frac{x-y}{|x-y|}\right)\right|
\,dy \le \mathcal M^\Omega(f)(x,t).
\end{align}
Since \(A_t\) is an approximation of the identity, it follows that
\begin{align*}
\lim_{t\to0^+}\left\|A_tf-c_\Omega f\right\|_{L^p(\mathbb{R}^n)}=0,
\end{align*}
where $t\to0^+$ means that $t \in (0,c_0)$
for some positive constant $c_0$ and $t \to 0$.

Applying this and \cite[Lemma 3.3]{dlyyz26}
and using \eqref{eq-At-less-M}, we obtain
\begin{align*}
&\liminf_{\lambda\to L_\gamma}
\lambda^p
\underset{\mathcal M^\Omega(f)(x,t)>\lambda t^{\frac{\gamma}{p}}}
{\int_{\mathbb R^n}\int_0^\infty}
t^{\gamma-1}\,dt\,dx
\\
&\quad\geq
\liminf_{\lambda\to L_\gamma}
\lambda^p
\underset{A_t f(x)>\lambda t^{\frac{\gamma}{p}}}
{\int_{\mathbb R^n}\int_0^\infty}
t^{\gamma-1}\,dt\,dx\geq
\frac{c_\Omega^p}{|\gamma|}
\|f\|_{L^p(\mathbb R^n)}^p.
\end{align*}
This finishes the
proof of Theorem \ref{thm-lower}.
\end{proof}

It remains to prove the upper estimates of Theorem \ref{thm-diag}.
In the case $p\in(1,\infty)$,
the upper estimate follows from Theorem \ref{thm-classical-rough-maximal}:
\begin{align}\label{eq-up-p>1}
&\sup_{\lambda\in(0,\infty)}
\lambda^p
\underset{\mathcal M^\Omega(f)(x,t)>\lambda t^{\frac{\gamma}{p}}}
{\int_{\mathbb R^n}\int_0^\infty}
t^{\gamma-1}\,dt\,dx\nonumber\\
&\quad\le\sup_{\lambda\in(0,\infty)}
\lambda^p
\underset{ M^\Omega(f)(x)>\lambda t^{\frac{\gamma}{p}}}
{\int_{\mathbb R^n}\int_0^\infty}
t^{\gamma-1}\,dt\,dx
\sim \left\|M^\Omega(f)\right\|^p_{L^p(\mathbb{R}^n)}\lesssim
\|f\|^p_{L^p(\mathbb{R}^n)},
\end{align}
where the second step used $\gamma\neq 0$
and the last step used Theorem \ref{thm-classical-rough-maximal}.

The most involved part is the upper estimate
in the case $p=1$, presented in the following theorem.

\begin{theorem}\label{thm-up-p=1}
Let $n\in \mathbb{N}\cap[2,\infty)$, $\gamma\in(-\infty,-n)\cup(0,\infty)$,
and $\Omega \in L(\log L)(\mathbb S^{n-1})$.  Then there exists a positive constant $C$,
depending only on $n$ and $\gamma$, such that,
for any $f\in L^1(\mathbb R^n)$,
\[
\sup_{\lambda\in(0,\infty)}
\lambda
\underset{\mathcal M^\Omega(f)(x,t)>\lambda t^\gamma}
{\int_{\mathbb R^n}\int_0^\infty}
t^{\gamma-1}\,dt\,dx
\leq
C\left[1+|\Omega|_{L(\log L)(\mathbb S^{n-1})}\right]\|f\|_{L^1(\mathbb R^n)}.
\]
\end{theorem}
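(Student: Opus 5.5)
We will prove Theorem \ref{thm-up-p=1} by a scaling-plus-decomposition argument that reduces everything to Lemma \ref{lem-main}. We may assume $f\ge0$ and $\Omega\ge0$, since $\mathcal M^\Omega(f)$ depends only on $|f|$ and $|\Omega|$. Set $D:=n+\gamma$ and fix $\lambda\in(0,\infty)$.

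\emph{Step 1: removing $\lambda$.} Replace $f$ by $f/\lambda$, so that we must bound $\nu_\gamma(\{\mathcal M^\Omega(f)(x,t)>t^\gamma\})$ by $C(1+|\Omega|_{L(\log L)(\mathbb S^{n-1})})\|f\|_{L^1(\mathbb R^n)}$. Next we dominate the operator. Since $r\in[t,2t)$ and $|B(x,r)|\sim t^n$, we have
\[
\mathcal M^\Omega(f)(x,t)\lesssim t^{-n}\int_{B(x,2t)}f(y)\,\Omega\left(\frac{x-y}{|x-y|}\right)dy .
\]
Hence the condition $\mathcal M^\Omega(f)>t^\gamma$ forces
\[
t^{-D}\int_{B(x,2t)}f(y)\,\Omega\left(\frac{x-y}{|x-y|}\right)dy>c.
\]

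\emph{Step 2: dyadic decomposition of $f$.} We decompose $f$ via bad cubes for the $(D,f)$-bad family.

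\begin{itemize}
\item \emph{Case $\gamma>0$.} Here $D>n$, so Lemma \ref{lem-cz-decomposition-positive} gives $f=\sum_{Q\in\mathcal Q}f\mathbf 1_Q$ with $\mathcal Q$ the disjoint maximal bad cubes. By maximality, the parent $\widehat Q$ is not bad, so $\mu_Q:=f\mathbf 1_Q\,dy$ satisfies $\mu_Q(Q)\le \ell(\widehat Q)^D=2^D\ell(Q)^D$. This is \eqref{9} up to the constant $2^D$, which we absorb. We split the sum over $\mathcal Q$ into cubes with $\ell(Q)\le t$, which give $H_{\rm small}$, and cubes with $\ell(Q)>t$.
\item \emph{Case $\gamma<-n$.} Here $D<0$, and Lemma \ref{lem-cz-decomposition-negative} gives $f=\sum_{Q\in\mathfrak B}f\mathbf 1_{E_Q}$. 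By \eqref{EQ}, $E_Q$ is a union of non-bad children, so $\mu_Q:=f\mathbf 1_{E_Q}dy$ satisfies $\mu_Q(Q)\le 2^n(\ell(Q)/2)^D\lesssim\ell(Q)^D$. We split into $\ell(Q)\ge t$, which gives $H_{\rm large}$, and $\ell(Q)<t$.
\end{itemize}

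In each case, Lemma \ref{lem-main} with $\beta$ a large fixed constant bounds the $\nu_\gamma$-measure of the main term by $|\Omega|_{L(\log L)}\sum_Q\ell(Q)^D$.

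\emph{Step 3: the complementary terms.} These are handled by geometry and the size condition.

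\begin{itemize}
\item \emph{Case $\gamma>0$, cubes with $\ell(Q)>t$.} The set of relevant $(x,t)$ has $x$ within distance $2t<2\ell(Q)$ of $Q$. Its $\nu_\gamma$-measure is therefore at most
\[
\sum_Q\int_0^{\ell(Q)}|3Q|\,t^{\gamma-1}\,dt\lesssim\sum_Q\ell(Q)^D,
\]
using $\gamma>0$.
\item \emph{Case $\gamma<-n$, cubes with $\ell(Q)<t$.} Again $x$ lies near $Q$ at scale $t$. We control the measure by $\sum_Q\int_{\ell(Q)}^\infty t^{n+\gamma-1}\,dt\lesssim\sum_Q\ell(Q)^D$, using $D<0$. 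Bad cubes are nested here, so this naive count may overcount. In that case we instead use that the largest bad cube $R$ containing $x$ at scale about $t$ already carries $\int_R f>\ell(R)^D$.
\end{itemize}

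\emph{Step 4: the final sum.} It remains to bound $\sum_Q\ell(Q)^D$ by $\|f\|_{L^1(\mathbb R^n)}$. For $\gamma>0$ this is immediate from badness and disjointness: $\sum_Q\ell(Q)^D<\sum_Q\int_Qf\le\|f\|_{L^1}$.

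For $\gamma<-n$ the bad cubes are nested, and this step is the main obstacle. We plan to use $\ell(Q)^D<\int_Q f$ together with the geometric decay in the chain of ancestors. For fixed $y$, the bad cubes containing $y$ with side $2^k$ contribute $\sum 2^{kD}$ dominated by the smallest one. We then charge $\ell(Q)^D$ to $\int_{E_Q}f$, or to the minimal bad descendants, in sparse fashion. We expect to arrange this so that $\sum_{Q\in\mathfrak B}\ell(Q)^D\lesssim\|f\|_{L^1}$.

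If this charging fails, the fallback is to restrict $\mathfrak B$ to cubes with $\mu_Q\neq0$ and redo the counting in \eqref{e5.2} directly.
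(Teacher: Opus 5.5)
Your architecture matches the paper's. You use the same two bad-cube decompositions and the same exceptional sets, namely $\bigcup_{Q}Q^*\times(0,\ell(Q))$ for $\gamma>0$ and $\bigcup_{Q\in\mathfrak B}\{(x,t):t>\ell(Q),\ \operatorname{dist}(x,Q)<2t\}$ for $\gamma<-n$, and you apply Lemma \ref{lem-main} off them. However, the application of Lemma \ref{lem-main} fails as written. After normalizing $\lambda=1$ you stop at level $1$, i.e.\ you use $(D,f)$-bad cubes. Absorbing $2^D$ (resp.\ $2^{-\gamma}$) into $\mu_Q$, you must then control $\{H>c\}$ for a fixed constant such as $c=2^{-D}/C_n$. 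The lemma only controls $\{H>C\beta\}$ with $\beta\ge1$ and its own constant $C$. Nothing forces $c\ge C$, and choosing $\beta$ large only shrinks the set.

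The missing device is to decouple the stopping level from $\lambda$. The paper uses $(D,\frac{f}{\eta\lambda})$-bad cubes with a small $\eta=\eta(n,\gamma)$. Then $d\mu_Q=\frac{f\mathbf 1_Q}{2^D\eta\lambda}\,dy$ (resp.\ $\frac{f\mathbf 1_{E_Q}}{2^{-\gamma}\eta\lambda}\,dy$) satisfies \eqref{9}, and the threshold becomes $\frac{1}{2^D\eta}=C\beta$ with $\beta\ge1$. The gain $\frac1\beta\sim\eta$ from the lemma cancels the loss in $\sum_Q\ell(Q)^D\lesssim\frac{\|f\|_{L^1(\mathbb R^n)}}{\eta\lambda}$, and the exceptional sets cost only the constant $\frac1\eta$. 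Alternatively, you could apply the lemma to $K\Omega$ for a constant $K\ge1$, using $|K\Omega|_{L(\log L)(\mathbb S^{n-1})}\le K(1+\log K)|\Omega|_{L(\log L)(\mathbb S^{n-1})}$. Either way, some rescaling must be made.

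Second, your Step 4 for $\gamma<-n$ is left as an expectation. It is needed twice: for the exceptional set, and for the right-hand side of Lemma \ref{lem-main}(i), which is applied to the nested family $\mathfrak B$. Charging $\ell(Q)^D$ to $\int_{E_Q}f$ cannot work, since $E_Q=\emptyset$ whenever all children of $Q$ are bad. Charging to minimal bad descendants is exactly the paper's argument, and it is short:
\begin{enumerate}[(1)]
\item In your normalization, $\ell(Q)^D<\int_Qf\le\|f\|_{L^1(\mathbb R^n)}$ with $D<0$ bounds the side lengths of bad cubes from below, so every bad cube contains a minimal one.
\item Minimal bad cubes are pairwise disjoint.
\item Each minimal $R$ satisfies $\sum_{Q\supseteq R}\ell(Q)^D=\frac{\ell(R)^D}{1-2^D}$.
\end{enumerate}
Hence $\sum_{Q\in\mathfrak B}\ell(Q)^D\le\frac{1}{1-2^D}\sum_{R\in\mathfrak B_{\min}}\ell(R)^D<\frac{\|f\|_{L^1(\mathbb R^n)}}{1-2^D}$.

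With this bound, the naive union bound in your Step 3 is precisely what the paper uses, so overcounting is harmless. Your detour through ``the largest bad cube $R$ containing $x$'' is unnecessary; such an $R$ need not exist, since $x$ is only close to $Q$. A minor point: $\{x:\operatorname{dist}(x,Q)<2\ell(Q)\}\subset5Q$, not $3Q$.
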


\begin{proof}
Fix $f\in L^1(\mathbb R^{n})$. Without loss of generality, we may assume that
both $f$ and $\Omega$ are nonnegative. Given $(x,t)\in\mathbb{R}^{n+1}_+$,
since \(r\in[t,2t)\), it follows that
\[
t^{-\gamma}\mathcal M^\Omega(f)(x,t)
\leq
C_n t^{-D}
\int_{|x-y|<2t}
f(y)
\Omega\left(\frac{x-y}{|x-y|}\right)
\,dy=:C_nT f(x,t),
\]
where $D:=n+\gamma$ and the positive constant $C_n$ depends only on $n$.
Therefore, it is enough to prove that, for any $\lambda\in(0,\infty)$,
\begin{align}\label{1}
\nu_\gamma\left(\left\{(x,t)\in\mathbb R^{n+1}_+:T f(x,t)>\lambda\right\}\right)
\lesssim\left[1+|\Omega|_{L(\log L)(\mathbb S^{n-1})}\right]
\frac{\|f\|_{L^1(\mathbb R^n)}}{\lambda},
\end{align}
where the implicit positive constant depends only on $n$ and $\gamma$.

Fix \(\lambda\in(0,\infty)\). Let \(\eta\in(0,\infty)\) be a small constant to be chosen later.
We consider the following two cases for $\gamma$.

\emph{Case 1: $\gamma\in(0,\infty)$}.
Let \(\mathcal Q\) be the collection of all
maximal $(D,\frac{f}{\eta\lambda})$-bad dyadic cubes.
For each \(Q\in\mathcal Q\), define
\[
Q^*
:=
\{x\in\mathbb R^n:\operatorname{dist}(x,Q)<2\ell(Q)\}.
\]
Define
\[
\mathcal B
:=
\bigcup_{Q\in\mathcal Q}
Q^*\times(0,\ell(Q)).
\]
Then we divide the left-hand side of \eqref{1} into the following two parts:
\begin{align}\label{16}
\nu_\gamma\left(\left\{(x,t)\in\mathbb{R}^{n+1}_+:T f(x,t)>\lambda\right\}\right)\le
\nu_\gamma(\mathcal{B}) + \nu_\gamma\left(\left\{(x,t)
\in\mathcal{B}^\complement:T f(x,t)>\lambda\right\}\right).
\end{align}
For the first part, by the disjointness of maximal bad dyadic cubes, \eqref{2}, and $\gamma>0$,
we conclude that
\begin{align}\label{17}
\nu_\gamma(\mathcal B)
&\leq
\sum_{Q\in\mathcal Q}
|Q^*|
\int_0^{\ell(Q)}t^{\gamma-1}\,dt
\lesssim
\sum_{Q\in\mathcal Q}
\ell(Q)^D
<\frac{1}{\eta \lambda}
\sum_{Q\in\mathcal Q}\int_{Q}f(y)\,dy\le \frac{\|f\|_{L^1(\mathbb{R}^n)}}{\eta \lambda}.
\end{align}

It remains to estimate the second part.
For any given $(x,t)\in\mathbb{R}^{n+1}_+$,
using Lemma \ref{lem-cz-decomposition-positive},
we rewrite $Tf(x,t)$ as
\begin{align*}
Tf(x,t)=t^{-D}\sum_{Q\in\mathcal{Q}}
\int_{B(x,2t)\cap Q}
f(y)
\Omega\left(\frac{x-y}{|x-y|}\right)
\,dy.
\end{align*}
Suppose \((x,t)\notin\mathcal B\), \(y\in Q\) with some $Q\in\mathcal{Q}$, and $|x-y|<2t.$
Then
\begin{align}\label{5}
\ell(Q)\leq t.
\end{align}
Indeed, if \(t<\ell(Q)\), then
$\operatorname{dist}(x,Q)\leq |x-y|<2t<2\ell(Q),$
which yields \(x\in Q^*\) and hence
$(x,t)\in Q^*\times(0,\ell(Q))\subset\mathcal B,$
which contradicts $(x,t)\notin \mathcal{B}$.
Thus, \eqref{5} holds. By \eqref{5}, we find that, for any $(x,t)\in \mathcal B^\complement$,
\begin{align}\label{6}
\frac1{2^D\eta\lambda}T f(x,t)
\leq t^{-D}\sum_{Q\in\mathcal Q,\ell(Q)\le t}
\int_{B(x,2t)}
\Omega\left(\frac{x-y}{|x-y|}\right)
\,d\mu_Q(y)=:
H(x,t),
\end{align}
where the measure \(\mu_Q\) is defined by setting
\[
d\mu_Q(y)
:=
\frac1{2^D\eta\lambda}f(y){\mathbf 1}_Q(y)\,dy.
\]
Note that, for any $Q\in\mathcal{Q}$,
$\mu_Q$ satisfies \eqref{9}.
Indeed, since $Q\in\mathcal{Q}$ is a maximal bad dyadic cube,
it follows that the parent of $Q$, denoted by $\widehat{Q}$, does not satisfy \eqref{2}; that is,
\[
\int_Q f(y)\,dy\le \int_{\widehat{Q}}f(y)\,dy
\leq\eta\lambda \ell(\widehat{Q})^D=
2^D\eta\lambda \ell(Q)^D.
\]
Thus, $\mu_Q$ satisfies \eqref{9}.

Now, we apply Lemma \ref{lem-main}(ii) to complete
the proof of Theorem \ref{thm-up-p=1} in this case.
By \eqref{6}, for any $(x,t)\in\mathcal{B}^\complement$, if \(T f(x,t)>\lambda\), then
$H(x,t)>\frac1{2^D\eta}.$
Let \(C\) be the same positive constant as in \eqref{7}.
Choose \(\eta\in(0,\infty)\) sufficiently small such that
$\beta:=\frac1{2^D C\eta}\geq1.$
Then
$\mathcal B^\complement\cap\{T f>\lambda\}\subset\{H>C\beta\}.$
From this, Lemma \ref{lem-main}(ii), and \eqref{2}, we infer that
\begin{align*}
\nu_\gamma\left(\mathcal B^\complement\cap\{T f>\lambda\}\right)
&\lesssim
\frac{|\Omega|_{L(\log L)(\mathbb{S}^{n-1})}}{\beta}
\sum_{Q\in\mathcal Q}\ell(Q)^D\sim
\eta|\Omega|_{L(\log L)(\mathbb{S}^{n-1})}
\sum_{Q\in\mathcal Q}\ell(Q)^D\\
&\le\eta|\Omega|_{L(\log L)(\mathbb{S}^{n-1})}
\frac{1}{\eta \lambda}
\sum_{Q\in\mathcal Q}\int_{Q}f(y)\,dy\le
|\Omega|_{L(\log L)(\mathbb{S}^{n-1})}\frac{\|f\|_{L^1(\mathbb{R}^n)}}{\lambda}.
\end{align*}
Combining this, \eqref{16}, and \eqref{17}, we obtain
\[
\nu_\gamma\{T f>\lambda\}
\lesssim\left[1+|\Omega|_{L(\log L)(\mathbb{S}^{n-1})}\right]
\frac{\|f\|_{L^1(\mathbb{R}^n)}}{\lambda},
\]
where the implicit positive constant
depends only on $n$ and $\gamma$.
This proves \eqref{1} in this case.

\emph{Case 2: $\gamma\in(-\infty,-n)$}.
Let \(\mathfrak B\) denote the collection of all $(D,\frac{f}{\eta\lambda})$-bad dyadic cubes.
For each \(Q\in\mathfrak B\), define
\[
\mathcal U_Q
:=
\left\{(x,t)\in\mathbb R^{n+1}_+:
t>\ell(Q),\ \operatorname{dist}(x,Q)<2t\right\}.
\]
Let
\[
\mathcal U:=\bigcup_{Q\in\mathfrak B}\mathcal U_Q.
\]
Note that, for any \(t>\ell(Q)\),
$|\{x\in\mathbb{R}^n:\operatorname{dist}(x,Q)<2t\}|\lesssim t^n$,
where the implicit positive constant depends only on $n$.
Let \(\mathfrak B_{\min}\) denote the collection of all
minimal bad cubes
with respect to the set inclusion.
Then we have
\begin{align}\label{e3.1}
\nu_\gamma(\mathcal U)
&\leq
\sum_{Q\in\mathfrak B}\nu_\gamma(\mathcal U_Q)
\lesssim
\sum_{Q\in\mathfrak B}\ell(Q)^D\nonumber\\
&\le \sum_{R\in\mathfrak B_{\min}}\sum_{Q\supset R}\ell(Q)^D
= \sum_{R\in\mathfrak B_{\min}}\ell(R)^D\sum_{j=0}^\infty2^{jD}\lesssim
\frac{\|f\|_{L^1(\mathbb R^n)}}{\eta\lambda},
\end{align}
where the second and the last steps used $D<0$.

Now, suppose that \((x,t)\notin\mathcal U\), \(y\in E_Q\subset Q\), and
$|x-y|<2t.$
Then necessarily
\begin{align}\label{e3.2}
t\leq \ell(Q).
\end{align}
Indeed, if \(t>\ell(Q)\), then
$\operatorname{dist}(x,Q)\leq |x-y|<2t,$
which further implies that \((x,t)\in\mathcal U_Q\subset\mathcal U\)
and hence contradicts $(x,t)\notin\mathcal{U}$.
Thus, using Lemma \ref{lem-cz-decomposition-negative}, \eqref{e3.2},
and the definition of \(T\), we conclude that,
for any \((x,t)\in\mathcal U^\complement\),
\begin{align}\label{e4.2}
\frac1{2^{-\gamma}\eta\lambda}T f(x,t)
\leq
H(x,t),
\end{align}
where
\[
H(x,t)
:=
\sum_{Q\in\mathfrak B,\,t\le \ell(Q)}
t^{-D}
\int_{Q\cap B(x,2t)}
\Omega\left(\frac{x-y}{|x-y|}\right)
\,d\mu_Q(y)
\]
and $d\mu_Q(y):=\frac1{2^{-\gamma}\eta\lambda} f(y){\bf 1}_{E_Q}(y)\,dy.$
Note that, from \eqref{EQ}, it follows that, for any $Q\in \mathfrak{B}$,
\begin{align*}
\int_{E_Q}f(y)\,dy
&\leq
\sum_{\genfrac{}{}{0pt}{}{Q'\in\operatorname{ch}(Q)}{Q'\notin\mathfrak B}}
\int_{Q'}f(y)\,dy\leq
\sum_{Q'\in\operatorname{ch}(Q)}
\eta\lambda \ell(Q')^D
\\
&=
2^n \eta\lambda \left(\frac{\ell(Q)}2\right)^D=
2^{-\gamma}\eta\lambda \ell(Q)^D,
\end{align*}
which further implies that $\mu_Q$ satisfies \eqref{9}
for any $Q\in\mathfrak{B}$.
From \eqref{e4.2}, it follows that, for any $(x,t)\in \mathcal U^\complement$,
if \(T f(x,t)>\lambda\), then
$H(x,t)>\frac1{2^{-\gamma}\eta}.$
Choose \(\eta>0\) sufficiently small such that
$\beta:=\frac{1}{2^{-\gamma}\eta C}\ge 1,$
where the positive constant $C$ is the same as in \eqref{e5.1}.
Then Lemma \ref{lem-main}(i) yields that
\begin{align}\label{e6.1}
\nu_\gamma\bigl(\mathcal U^\complement\cap\{T f>\lambda\}\bigr)
&\lesssim
\eta |\Omega|_{L(\log L)(\mathbb{S}^{n-1})}
\sum_{Q\in\mathfrak B}\ell(Q)^D
\nonumber\\
&\lesssim
\eta |\Omega|_{L(\log L)(\mathbb{S}^{n-1})}
\frac{\|f\|_{L^1(\mathbb{R}^n)}}{\eta\lambda}
\lesssim
|\Omega|_{L(\log L)(\mathbb{S}^{n-1})}
\frac{\|f\|_{L^1(\mathbb{R}^n)}}{\lambda},
\end{align}
where the second step used
an argument similar to the estimation of \eqref{e3.1}.
Combining \eqref{e6.1} and \eqref{e3.1}, we have
\[
\nu_\gamma\{T f>\lambda\}
\lesssim\left[1+|\Omega|_{L(\log L)(\mathbb{S}^{n-1})}\right]
\frac{\|f\|_{L^1(\mathbb{R}^n)}}{\lambda},
\]
where the implicit positive constant depends only on $n$ and $\gamma$.
This then finishes the proof of Theorem \ref{thm-up-p=1}.
\end{proof}
Now, we are ready to prove the sufficiency of Theorem \ref{thm-diag}.
\begin{proof}[Proof of the sufficiency of Theorem \ref{thm-diag}]
The lower estimate follows from Proposition \ref{thm-lower}.
The upper estimate of \eqref{eq-p>1} is proved in \eqref{eq-up-p>1}.
The upper estimate of \eqref{eq-p=1} follows from Theorem \ref{thm-up-p=1}.
This finishes the proof of the sufficiency of Theorem \ref{thm-diag}.
\end{proof}

\subsection{Necessity of Theorem \ref{thm-diag}}\label{s2.4}
In this subsection, we show the necessity of Theorem \ref{thm-diag}; that is,
we prove the following conclusion.

\begin{theorem}\label{thm-sharpness}
Let \(n\in\mathbb N\cap[2,\infty)\), \(p\in[1,\infty)\), and
\(\Omega\in L^1(\mathbb S^{n-1})\) with \(\Omega\not\equiv0\). Assume that either
$p\in(1,\infty)\ \text{and}\ \gamma=0,$
or
$p=1\ \text{and}\  \gamma\in[-n,0].$
Then there exists a function \(f\in L^p(\mathbb R^n)\) such that
\[
\sup_{\lambda\in(0,\infty)}
\lambda^p
\underset{\mathcal{M}^\Omega(f)(x,t)>\lambda t^{\frac{\gamma}{p}}}
{\int_{\mathbb R^n}\int_0^\infty}
t^{\gamma-1}\,dt\,dx
=\infty,
\]
where $\mathcal{M}^\Omega$ is as in \eqref{eq-M}.
\end{theorem}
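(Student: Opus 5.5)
Since the lifted operator dominates a positive multiple of an averaging operator, the plan is to reduce to counterexamples for averages of $|f|$. Fix $f\ge0$. Because $\Omega\not\equiv0$, the set $\{|\Omega|>\epsilon\}$ has positive $\sigma$-measure for some $\epsilon>0$. Hence it contains a cap-like set $S\subset\mathbb S^{n-1}$ of positive measure; replacing $S$ by a density point neighborhood, $S$ can be taken to intersect a small cap $\mathcal C$ in a set of measure at least $\frac12\sigma(\mathcal C)$. The key lower bound we aim for is
\[
\mathcal M^\Omega(f)(x,t)\ge \epsilon\fint_{B(x,t)} f(y)\mathbf 1_{\{(x-y)/|x-y|\in S\}}\,dy .
\]

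\emph{Case $p\in(1,\infty)$, $\gamma=0$.} Here the measure is $t^{-1}dt\,dx$. Take $f=\mathbf 1_{B(\mathbf 0,1)}$, which lies in every $L^p$. For $|x|\ge 2$, say, and $t\sim |x|$ (for instance $t\in[2|x|,4|x|]$), the ball $B(x,t)$ contains $B(\mathbf 0,1)$. The directions $(x-y)/|x-y|$ for $y\in B(\mathbf 0,1)$ lie in a small cap around $x/|x|$. Better still, we get a uniform bound by choosing $t$ large: for $t\ge 2|x|+2$, every direction matters only through $x$.

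To avoid directional issues, average instead over $x$ in the cone $\Gamma:=\{x: x/|x|\in S',\,|x|\ge2\}$. Here $S'$ is chosen so that, for such $x$, a fixed proportion of $B(\mathbf 0,1)$ viewed from $x$ has direction in $S$. This works once $|x|$ is large, because the viewing directions concentrate near $x/|x|$, and by Lebesgue density we can pick $S'$ with $\sigma(S\cap\text{cap}(x/|x|,1/|x|))\gtrsim$ the cap measure. A cleaner route is to take $f$ to be the indicator of a large ball, or to note the following: for $x\in\Gamma$ and $t\in[|x|+1,2|x|+2]$, we have $\mathcal M^\Omega(f)(x,t)\gtrsim |x|^{-n}$.

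Then, for $\lambda$ small, the level set contains $\{x\in\Gamma:|x|\lesssim\lambda^{-1/n}\}\times[|x|+1,2|x|+2]$. The $\nu_0$-measure of this set is $\sim\log 2\cdot\lambda^{-1}$ in volume. Multiplying by $\lambda^p$ gives $\sim\lambda^{p-1}$, which tends to $0$ and is therefore not useful.

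So instead use $\mathcal M^\Omega(f)\ge c\,t^{-n}\int_{B(x,t)}f$ with directional weight, and choose $f(y)=|y|^{-n/p}(\log(1/|y|))^{-a}\mathbf 1_{|y|<1/2}$ with $1/p<a\le1$, so that $f\in L^p$ but barely. For $|x|<t$ small, $\mathcal M^\Omega f(x,t)\gtrsim t^{-n/p}(\log 1/t)^{-a}$. The set where this exceeds $\lambda$ is $\{t<t_\lambda\}$, and its $dt\,dx/t$ measure is $\int_0^{t_\lambda}t^{n}\,dt/t\sim t_\lambda^n\sim\lambda^{-p}(\log)^{-ap}$. That is borderline as well. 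Instead, exploit the divergence of $\int dt/t$: for fixed $x$, all $t\in(2|x|,1)$ give $\mathcal M^\Omega f(x,t)\gtrsim \fint_{B(\mathbf 0,t)}f\ge\lambda$ when $f$ is nonintegrable-near-zero in the averaged sense. Since $\nu_0$ gives $\log(1/|x|)$ in $t$, a sum of bumps $f=\sum_k 2^{-k}\ldots$ produces $\lambda^p\nu_0=\infty$.

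\emph{Case $p=1$, $\gamma\in[-n,0]$.} Take $f=\mathbf 1_{B(\mathbf 0,1)}$. For $|x|\in S'$-cone, $|x|\ge2$, and $t\in[|x|,2|x|]$, we have $\mathcal M^\Omega f(x,t)\gtrsim t^{-n}$, so the level set condition $t^{-n}\gtrsim\lambda t^{\gamma}$ means $t\lesssim\lambda^{-1/(n+\gamma)}$. For $\gamma=-n$ this holds for all $t$ once $\lambda$ is small, and the measure $\int_{|x|\ge2}|x|^{\gamma}dx=\infty$. For $-n<\gamma<0$, the measure is $\int_2^{\lambda^{-1/(n+\gamma)}}r^{n-1+\gamma}dr\sim\lambda^{-1}$, which is only bounded. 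Then use $\gamma=0$ together with the log, plus bumps $f=\sum 2^{-k}\mathbf 1_{B(z_k,r_k)}$ chosen to make the suprema add.

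The main obstacle will be the intermediate range $\gamma\in(-n,0]$ at $p=1$, which needs a superposition construction rather than a single bump.
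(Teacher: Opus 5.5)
Your $\gamma=-n$ argument (one bump, $x$ in a cone of good directions, $t\sim|x|$) is essentially the paper's Case 2. The difference is that you use a density-point cone, which needs a uniform density radius, e.g.\ via Egorov, where the paper averages over $\mathbb S^{n-1}$ in Lemma~\ref{lem-far-field}. The other two cases are not proved.

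\textbf{The case $\gamma=0$ (for $p>1$ and equally for $p=1$).} Every configuration you try ties the size of the $x$-region to $t$: first $t\sim|x|$, then $|x|<t$, then $t\in(2|x|,1)$. All of them give finite $\nu_0$-measure, as your own computations show. The last attempt fails too, since $\int_{|x|<1/2}\log\frac{1}{|x|}\,dx<\infty$, and the closing ``sum of bumps'' is never specified. The missing observation is that the divergence comes from $t\to0^+$ at a fixed $x$ inside the support. Take $f=\mathbf 1_{B(\mathbf 0,2)}$, $x\in B(\mathbf 0,1)$ and $t\in(0,1]$. Then $B(x,t)\subset B(\mathbf 0,2)$, so by translation and dilation invariance, $\mathcal M^\Omega(f)(x,t)\ge\fint_{B(\mathbf 0,1)}|\Omega(z/|z|)|\,dz=c_\Omega>0$ uniformly in $t$. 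Hence for every $\lambda<c_\Omega$ the level set contains $B(\mathbf 0,1)\times(0,1]$, whose $\nu_0$-measure is $|B(\mathbf 0,1)|\int_0^1\frac{dt}{t}=\infty$, for every $p\in[1,\infty)$. Your directional set $S$ is not even needed here.

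\textbf{The case $p=1$, $\gamma\in(-n,0)$, which is the core of the theorem.} You correctly see that a single bump only gives $\lambda\nu_\gamma\lesssim 1$, but ``bumps chosen to make the suprema add'' is not an argument. The mass must be counted again at every coarser scale, which requires a nested, Cantor-type configuration of dimension $D=n+\gamma$.

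The paper builds it as follows. Take $n_k$ with $Dn_k\ge k$. Inside each generation-$(k-1)$ cube $Q$, place $q_k\in(2^{Dn_k}-1,2^{Dn_k}]$ dyadic subcubes of side $2^{-n_k}\ell(Q)$. Set $f=\sum_k k^{-3/2}2^{-\gamma S_k}\sum_{Q\in\mathcal C_k}\mathbf 1_Q$ with $S_k=n_1+\cdots+n_k$.

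Then $\sum_{Q\in\mathcal C_k}|Q|\ell(Q)^\gamma=\prod_{j\le k}q_j2^{-Dn_j}\sim1$ for every $k$. Consequently $\|f\|_{L^1}\lesssim\sum_k k^{-3/2}<\infty$. At the same time, the descendants of each $Q\in\mathcal C_k$ give $\fint_Q f\gtrsim k^{-1/2}\ell(Q)^\gamma$.

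Because $\Omega$ may vanish on most directions, this average must still be transferred to $\mathcal M^\Omega$ on a set of measure $\gtrsim|Q|$ at distance $\sim\ell(Q)$ from $Q$. This is Lemma~\ref{lem-good-set}: a set $G_Q\subset Q+A\ell(Q)e$, where $e$ is a Lebesgue point with $\Omega(e)>0$. Take $\lambda_N\sim N^{-1/2}$ and $I_Q=(R_Q/2,R_Q]$ with $R_Q\sim\ell(Q)$. The pairwise disjoint sets $G_Q\times I_Q$, for $Q\in\mathcal C_k$ and $k\le N$, lie in the level set $\{\mathcal M^\Omega f>\lambda_N t^\gamma\}$. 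This gives $\lambda_N\nu_\gamma$ of that level set $\gtrsim N^{-1/2}\cdot N\to\infty$.
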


To prove Theorem \ref{thm-sharpness}, the key point is to
establish the lower estimate of the integrals of
the kernel $\Omega$ on some cubes or balls. To be
precise, we need the following lemmas.
\begin{lemma}\label{lem-directional-lower}
Let \(\Omega\in L^1(\mathbb S^{n-1})\) be nonnegative with \(\Omega\not\equiv0\). Then
there exist \(e\in\mathbb S^{n-1}\), \(A\in(1,\infty)\), and
\(c_0\in(0,\infty)\), depending only on $\Omega$, such that,
for any dyadic cube \(Q\subset\mathbb R^n\) and any \(y\in Q\),
\begin{align}\label{eq-directional-lower}
\frac1{|Q|}
\int_{Q+A\ell(Q)e}
\Omega\left(\frac{x-y}{|x-y|}\right)\,dx
\ge c_0.
\end{align}
\end{lemma}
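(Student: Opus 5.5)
Everything in \eqref{eq-directional-lower} is invariant under translation and dilation, so the plan is to reduce to a single normalized cube and then control the direction field there. Write $x=y+z$. Then the left-hand side becomes
\[
\frac1{|Q|}\int_{Q+A\ell(Q)e-y}\Omega\left(\frac{z}{|z|}\right)dz .
\]
The set $Q-y$ is a cube of side $\ell(Q)$ containing the origin. After dilating by $\ell(Q)^{-1}$, which $\Omega(z/|z|)$ ignores, it suffices to show the following. For every unit cube $P$ with $\mathbf 0\in\overline P$,
\[
\int_{P+Ae}\Omega\left(\frac{z}{|z|}\right)dz\ge c_0 .
\]
Here $e$, $A$, and $c_0$ must be independent of $P$. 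The set of such $P$ is exactly $\{[0,1)^n-u:\ u\in[0,1]^n\}$, which is compact in the parameter $u$.

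First I choose $e$ as a Lebesgue-type point of $\Omega$. Since $\Omega\ge0$ and $\Omega\not\equiv0$, almost every point of $\{\Omega>0\}$ is a Lebesgue point of $\Omega$ on $\mathbb S^{n-1}$, and that set has positive measure. So there are $e$ and $\delta_0>0$ such that, for every $\delta\in(0,\delta_0]$, the cap $S_\delta:=\{\omega\in\mathbb S^{n-1}:|\omega-e|<\delta\}$ satisfies
\[
\int_{S_\delta}\Omega\,d\sigma\ge \tfrac12\Omega(e)\,\sigma(S_\delta)>0 .
\]
In fact I only need one fixed $\delta\le\delta_0$ with $\int_{S_\delta}\Omega\,d\sigma=:m>0$. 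Any $\delta$ works for this, because a positive-measure set meets some cap of every radius around a density point.

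Next comes the geometry. For $z\in P+Ae$ we have $z=Ae+w$ with $|w|\le\sqrt n$. Hence
\[
\left|\frac{z}{|z|}-e\right|\le\frac{2\sqrt n}{A}.
\]
For large $A$, all directions from the origin to $P+Ae$ therefore lie in a small cap around $e$, of radius at most $2\sqrt n/A$. The difficulty is the reverse inclusion: I need the directions of $P+Ae$ to cover a fixed cap $S_{\delta}$ in a quantitatively uniform way, so that the integral is bounded below by a multiple of $\int_{S_\delta}\Omega$. A single translated cube does not cover a whole cap of directions of radius $\sim1/A$; it only covers a cap of radius $\sim 1/(2A)$ around the direction of its center, and that center moves with $u$. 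This is the main obstacle. I would handle it by the following compactness argument instead.

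Consider the function
\[
F(u):=\int_{[0,1)^n-u+Ae}\Omega\left(\frac{z}{|z|}\right)dz .
\]
It is continuous in $u\in[0,1]^n$, since translations are continuous in $L^1_{\rm loc}$ and $\Omega(z/|z|)$ is locally integrable away from the origin. The box stays at distance at least $A-\sqrt n>0$ from the origin. It is therefore enough to find $A$ with $F(u)>0$ for every $u$, and then set $c_0:=\min_u F(u)>0$.

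For positivity, use polar coordinates. The box $[0,1)^n-u+Ae$ contains the ball $B(Ae+c_u,1/2)$, where $c_u:=(\tfrac12,\dots,\tfrac12)-u$ and $|c_u|\le\sqrt n$. The directions of this ball, taken at radii $\rho\in(A-\sqrt n-1,\,A+\sqrt n+1)$, contain the cap of radius $\sim 1/(4A)$ around $\omega_u:=(Ae+c_u)/|Ae+c_u|$. So
\[
F(u)\gtrsim A^{n-1}\int_{|\omega-\omega_u|<1/(4A)}\Omega\,d\sigma .
\]
This is positive provided $\Omega$ is not a.e. zero on that cap. To guarantee this for all $u$, pick $e$ a density point of $\{\Omega>0\}$. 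Then take $A$ so large that every cap of radius $1/(4A)$ centered within $2\sqrt n/A$ of $e$ meets $\{\Omega>0\}$ in positive measure. The density property gives relative measure of $\{\Omega>0\}$ in $S_{3\sqrt n/A}$ tending to $1$, and each such small cap occupies a fixed positive fraction, roughly $(12\sqrt n)^{-(n-1)}$, of $S_{3\sqrt n/A}$. Combining this positivity with continuity and compactness gives $c_0$. The constants depend only on $\Omega$ (and $n$), as required.
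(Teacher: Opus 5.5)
Your proof is correct, but it takes a different route from the paper.

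\textbf{The paper's route.} The paper takes $e$ to be a Lebesgue point of $\Omega$ itself with $\Omega(e)>0$. It never needs the directions of the translated cube to cover a cap. It uses only the containment you also note: all directions from $y$ to $Q_0+Ae$ lie in $B_{\mathbb S^{n-1}}(e,2\sqrt n/A)$, and the radial integral is bounded by $\int_{A-\sqrt n}^{A+\sqrt n}r^{n-1}\,dr\lesssim A^{n-1}$. This gives
\[
\sup_{y\in Q_0}\int_{Q_0}\left|\Omega\left(\frac{Ae+x-y}{|Ae+x-y|}\right)-\Omega(e)\right|dx\lesssim \frac{1}{\sigma(B_{\mathbb S^{n-1}}(e,\frac{2\sqrt n}{A}))}\int_{B_{\mathbb S^{n-1}}(e,\frac{2\sqrt n}{A})}|\Omega-\Omega(e)|\,d\sigma\to0 .
\]
So the integrand is $L^1$-close to the constant $\Omega(e)$ on the whole unit cube, uniformly in $y$. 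The triangle inequality then gives the explicit constant $c_0=\Omega(e)/2$ for every sufficiently large $A$. The ``main obstacle'' you identify disappears once you compare with $\Omega(e)$ in $L^1$ instead of bounding $\int\Omega$ from below over a covered cap. Your first paragraph already sets up exactly the Lebesgue point this needs.

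\textbf{Your route.} You use only that $e$ is a density point of $\{\Omega>0\}$. You bound $F(u)$ from below through the inscribed ball $B(Ae+c_u,1/2)$ and the cap of radius $\sim 1/(4A)$ around its centre direction $\omega_u$. Uniformity then comes from continuity of $F$ in $u$ and compactness of $[0,1]^n$. These steps all check out:
\begin{itemize}
\item continuity of $F$ follows from local integrability of $\Omega(z/|z|)$ and absolute continuity of the integral;
\item $|\omega_u-e|\le 2\sqrt n/A$, so each small cap sits inside $S_{3\sqrt n/A}$ and occupies a fixed fraction of it;
\item the density argument therefore shows each such cap meets $\{\Omega>0\}$ in positive measure once $A$ is large.
\end{itemize}

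\textbf{Comparison.} The paper's argument is shorter and quantitative. Yours is non-constructive: $c_0=\min_u F(u)$ has no explicit dependence on $\Omega$, and it needs an extra compactness layer. What yours buys is reliance only on the qualitative fact that $\{\Omega>0\}$ has a density point, rather than on the size of $\Omega$ at a Lebesgue point.
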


\begin{proof}
Since $\Omega\equiv0$, from
the Lebesgue differentiation theorem
on $\mathbb{S}^{n-1}$ (see, for instance, \cite[Lemma 2.3]{dlyyz22}),
it follows that there exists a Lebesgue point \(e\in\mathbb S^{n-1}\)
of \(\Omega\) satisfying $\Omega(e)>0.$
Let $Q_0:=[0,1)^n.$
We first claim that
\begin{align}\label{eq-uniform-directional-conv}
\lim_{A\to\infty}
\sup_{y\in Q_0}
\int_{Q_0}
\left|
\Omega\left(\frac{Ae+x-y}{|Ae+x-y|}\right)-\Omega(e)
\right|
\,dx=0.
\end{align}
Indeed, fix \(A\ge 2\sqrt n\) and \(y\in Q_0\).
Take a nonnegative measurable function $H$ on $\mathbb{R}^n$.
Since $A\ge 2\sqrt{n}$,
we deduce that $|Ae+x-y|\ge A - \sqrt{n}>0$ for any $x\in Q_0$.
By this, a change of variables, and polar coordinates,
we find that
\begin{align}\label{eq-hu-1}
\int_{Q_0}H\left(\frac{Ae+x-y}{|Ae+x-y|}\right)\,dx
&=\int_{Ae+Q_0-y} H\left(\frac z{|z|}\right)\,dz\nonumber\\
&=\int_{\mathbb S^{n-1}}H(\xi)
\int_0^\infty
\mathbf 1_{Ae+Q_0-y}(r\xi)r^{n-1}\,dr\,d\sigma(\xi).
\end{align}
Note that, if \(r\xi\in Ae+Q_0-y\), then \(|r-A|\le |r\xi-Ae|\le\sqrt n\).
Moreover,
\begin{align*}
|\xi-e|=\left|\frac{r\xi}{|r\xi|}-\frac{Ae}{|Ae|} \right|
\le\left|\frac{r\xi}{|r\xi|}-\frac{r\xi}{|Ae|}\right|
+\left|\frac{r\xi}{|Ae|}-\frac{Ae}{|Ae|}\right|\le
\frac{2\sqrt{n}}{A}.
\end{align*}
These, together with \eqref{eq-hu-1}, further imply that
\begin{align}\label{eq-hu}
\int_{Q_0}H\left(\frac{Ae+x-y}{|Ae+x-y|}\right)\,dx\lesssim A^{n-1}
\int_{B_{\mathbb S^{n-1}}(e,\frac{2\sqrt{n}}{A})}H(\xi)\,d\sigma(\xi),
\end{align}
where the implicit positive constant is independent of \(A\) and \(y\)
and, for any $\xi\in\mathbb{S}^{n-1}$ and $r\in(0,\infty)$,
\[B_{\mathbb S^{n-1}}(\xi,r):=\{\eta\in\mathbb{S}^{n-1}:\ |\eta-\xi|<r\}.\]
Applying \eqref{eq-hu} with
\(
H(\xi):=|\Omega(\xi)-\Omega(e)|,
\)
we find that
\begin{align*}
\int_{Q_0}
\left|
\Omega\left(\frac{Ae+x-y}{|Ae+x-y|}\right)-\Omega(e)
\right|
\,dx
&\lesssim A^{n-1}
\int_{B_{\mathbb S^{n-1}}(e,\frac{2\sqrt{n}}{A})}
|\Omega(\xi)-\Omega(e)|\,d\sigma(\xi)
\\
&\lesssim
\frac{1}{\sigma(B_{\mathbb S^{n-1}}(e,\frac{2\sqrt{n}}{A}))}
\int_{B_{\mathbb S^{n-1}}(e,\frac{2\sqrt{n}}{A})}
|\Omega(\xi)-\Omega(e)|\,d\sigma(\xi),
\end{align*}
where the implicit positive constants are independent of $A$ and $y$.
Since \(e\) is a Lebesgue point of \(\Omega\),
it follows that the last term tends to \(0\) as
\(A\to\infty\). This proves the aformentioned claim \eqref{eq-uniform-directional-conv}.

Now, choose \(A\in(1,\infty)\) sufficiently large so that
\begin{align*}
\sup_{y\in Q_0}
\int_{Q_0}
\left|
\Omega\left(\frac{Ae+x-y}{|Ae+x-y|}\right)-\Omega(e)
\right|
\,dx\le \frac{\Omega(e)}{2}.
\end{align*} Then, for any \(y\in Q_0\),
\[
\int_{Q_0}
\Omega\left(\frac{Ae+x-y}{|Ae+x-y|}\right)\,dx
\ge \frac {\Omega(e)}2 .
\]
Finally, let \(Q=x_Q+\ell(Q)Q_0\) be an arbitrary dyadic cube and write
\[
y=x_Q+\ell(Q)y'\quad
\text{and}\quad
x=x_Q+\ell(Q)x'+A\ell(Q)e,
\]
where $x_Q$ denotes the lower-left corner of $Q$ and \(x',y'\in Q_0\).
Then
\[
\frac1{|Q|}
\int_{Q+A\ell(Q)e}
\Omega\left(\frac{x-y}{|x-y|}\right)\,dx
=
\int_{Q_0}
\Omega\left(\frac{Ae+x'-y'}{|Ae+x'-y'|}\right)\,dx'
\ge \frac {\Omega(e)}2.
\]
This proves \eqref{eq-directional-lower} with \(c_0:=\frac{\Omega(e)}{2}\), which then
completes the proof of Lemma \ref{lem-directional-lower}.
\end{proof}

\begin{lemma}\label{lem-good-set}
Let \(\Omega,e,\) and \(A\) be as in Lemma \ref{lem-directional-lower}. Then,
for any dyadic cube \(Q\subset\mathbb R^n\) and any
nonnegative measurable function \(h\) on $\mathbb{R}^{n}$, there exists a measurable set
$G_Q\subset Q+A\ell(Q)e$
such that
\begin{align}\label{eq-GQ-size}
|G_Q|\ge c_1|Q|
\end{align}
and, for any \(x\in G_Q\),
\begin{align}\label{eq-GQ-lower}
\int_Q h(y)\Omega\left(\frac{x-y}{|x-y|}\right)\,dy
\ge
c_1\int_Q h(y)\,dy,
\end{align}
where the positive constant $c_1$ depends only on $\Omega$.
\end{lemma}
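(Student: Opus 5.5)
The plan is to deduce Lemma \ref{lem-good-set} from Lemma \ref{lem-directional-lower} by a Fubini argument followed by a Chebyshev-type selection. We may assume $\Omega\ge0$ (as in Lemma \ref{lem-directional-lower}) and $\int_Q h(y)\,dy\in(0,\infty)$. If the integral vanishes, \eqref{eq-GQ-lower} is trivial and we take $G_Q:=Q+A\ell(Q)e$. If it is infinite, we first truncate $h$, or simply note that the argument below works with $h$ replaced by $\min\{h,N\}$ and let $N\to\infty$ via monotone convergence. Write $\widetilde Q:=Q+A\ell(Q)e$ and define
\[
F(x):=\int_Q h(y)\,\Omega\left(\frac{x-y}{|x-y|}\right)dy,\qquad x\in\widetilde Q .
\]

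The first step is a lower bound for the average of $F$. By Tonelli's theorem and \eqref{eq-directional-lower},
\[
\int_{\widetilde Q}F(x)\,dx=\int_Q h(y)\int_{\widetilde Q}\Omega\left(\frac{x-y}{|x-y|}\right)dx\,dy\ge c_0|Q|\int_Q h(y)\,dy .
\]

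A lower bound on the average alone does not yield a large set where $F$ is big. We also need an upper bound, either on $F$ itself or on an $L^q$ norm of it. Since $A>1$, for $x\in\widetilde Q$ and $y\in Q$ we have $|x-y|\sim A\ell(Q)$, so the direction $\frac{x-y}{|x-y|}$ lies in a spherical cap around $e$ of radius $\sim \sqrt n/A$. By the same polar-coordinate computation as in \eqref{eq-hu}, now with $H=\Omega$, we get, uniformly in $y\in Q$,
\[
\int_{\widetilde Q}\Omega\left(\frac{x-y}{|x-y|}\right)dx\lesssim A^{n-1}\|\Omega\|_{L^1(\mathbb S^{n-1})}|Q|=:c_2|Q| .
\]
This is an $L^1$ bound in $x$, which Tonelli again turns into $\int_{\widetilde Q}F\le c_2|Q|\int_Q h$. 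But it only gives an $L^1$ upper bound, and a function with comparable lower and upper averages can still be concentrated on a tiny set. This is the main obstacle.

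To overcome it, I would pass from $\Omega$ to its truncation $\Omega_M:=\min\{\Omega,M\}$. We rerun Lemma \ref{lem-directional-lower} with $\Omega_M$ in place of $\Omega$. Take $M$ large, say $M\ge 2\Omega(e)$ with $e$ a Lebesgue point of both functions. Then $e$ is also a Lebesgue point of $\Omega_M$ with $\Omega_M(e)=\Omega(e)$, since $|\Omega_M-\Omega_M(e)|\le|\Omega-\Omega(e)|$. Hence \eqref{eq-directional-lower} holds for $\Omega_M$ with the same $e$, $A$ and $c_0$. Setting $F_M$ to be $F$ with $\Omega_M$ in place of $\Omega$, we have $F\ge F_M$ and the pointwise bound
\[
F_M(x)\le M\int_Q h ,
\]
while still $\int_{\widetilde Q}F_M\ge c_0|Q|\int_Q h$.

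Now let $G_Q:=\{x\in\widetilde Q:\ F_M(x)\ge \frac{c_0}{2}\int_Q h\}$. Splitting $\widetilde Q$ into $G_Q$ and its complement gives
\[
c_0|Q|\int_Q h\le \int_{\widetilde Q}F_M\le M|G_Q|\int_Q h+\frac{c_0}{2}|Q|\int_Q h ,
\]
so $|G_Q|\ge \frac{c_0}{2M}|Q|$. On $G_Q$ we have $F(x)\ge F_M(x)\ge \frac{c_0}{2}\int_Q h$. Both \eqref{eq-GQ-size} and \eqref{eq-GQ-lower} then follow with $c_1:=\min\{\frac{c_0}{2M},\frac{c_0}{2}\}$, which depends only on $\Omega$. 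Measurability of $G_Q$ is immediate because $F_M$ is measurable by Tonelli's theorem.
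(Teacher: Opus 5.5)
Your proof is correct and follows the paper's argument: you truncate $\Omega$ to get the pointwise bound $F_M\le M\int_Q h$, use Lemma \ref{lem-directional-lower} with Tonelli to bound $\int_{\widetilde Q}F_M$ from below, and take $G_Q$ to be the superlevel set at height $\frac{c_0}{2}\int_Q h$. Your requirement $M\ge\Omega(e)$, which gives $|\Omega_M-\Omega_M(e)|\le|\Omega-\Omega(e)|$ so that the same $e$, $A$, $c_0$ work for $\Omega_M$, supplies exactly the justification the paper omits when it applies Lemma \ref{lem-directional-lower} to $\Omega_N$ assuming only $\Omega_N\not\equiv0$.
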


\begin{proof}
Fix a dyadic cube \(Q\) and a nonnegative measurable
function $h$. Choose $N\in\mathbb{N}$ such that
$\Omega_N:=\min\{\Omega,N\}\not\equiv0$.
Let
$X_Q:=Q+A\ell(Q)e$ and, for any bounded set $E\subset\mathbb R^n$,
\begin{align}\label{eq-mq}
m_E(h):=\int_E h(y)\,dy.
\end{align}
If \(m_Q(h)=0\), the desired conclusion is trivial. Assume \(m_Q(h)>0\).
For any $x\in X_Q$, define
\[
F_Q(x):=
\int_Q h(y)\Omega_N\left(\frac{x-y}{|x-y|}\right)\,dy.
\]
By Lemma \ref{lem-directional-lower} and Tonelli's theorem,
we obtain
\begin{align}\label{eq-fl}
\int_{X_Q}F_Q(x)\,dx
=
\int_Q h(y)
\int_{X_Q}
\Omega_N\left(\frac{x-y}{|x-y|}\right)\,dx\,dy
\ge c_0 m_Q(h)|Q|,
\end{align}
where the positive constant $c_0$ is as in Lemma \ref{lem-directional-lower}.
On the other hand, since \(0\le\Omega_N\le N\), it follows that,
for any $x\in X_Q$,
\begin{align}\label{eq-fu}
0\le F_Q(x)\le N m_Q(h).
\end{align}
Let
\[
G_Q:=
\left\{x\in X_Q:F_Q(x)\ge \frac{c_0}{2}m_Q(h)\right\}.
\]
Then, using \eqref{eq-fu} and \eqref{eq-fl}, we conclude that
\[
c_0m_Q(h)|X_Q|
\le
\int_{X_Q}F_Q(x)\,dx
<
N m_Q(h)|G_Q|+\frac{c_0}{2}m_Q(h)|X_Q|.
\]
Thus,
$|G_Q|\ge \frac{c_0}{2N}|X_Q|=\frac{c_0}{2N}|Q|.$
Moreover, for any \(x\in G_Q\), since \(\Omega_N\le\Omega\), we infer that
\[
\int_Q h(y)\Omega\left(\frac{x-y}{|x-y|}\right)\,dy
\ge
F_Q(x)
\ge
\frac{c_0}{2}m_Q(h).
\]
Taking \(c_1:=\frac{c_0}{2N}\), we obtain
\eqref{eq-GQ-size} and \eqref{eq-GQ-lower}. This finishes the proof of Lemma
\ref{lem-good-set}.
\end{proof}

\begin{lemma}\label{lem-far-field}
Let \(\Omega\in L^1(\mathbb S^{n-1})\) be nonnegative with
\(\Omega\not\equiv0\), $R\in(0,\infty)$, and
\(h\) be nonnegative measureable on $\mathbb{R}^n$.
Then, for any \(\rho\ge4R\), there is
a measurable set \(E_\rho\subset\mathbb S^{n-1}\) satisfying that
\(\sigma(E_\rho)\ge c\) and, for any \(\xi\in E_\rho\),
\begin{align}\label{eq-farf}
\int_{B({\bf 0},R)}
h(y)\Omega\left(\frac{\rho\xi-y}{|\rho\xi-y|}\right)\,dy
\ge c\int_{B({\bf 0},R)}h(y)\,dy,
\end{align}
where the positive constant $c$
depends only on $n$ and $\Omega$.
\end{lemma}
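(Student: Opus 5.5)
We follow the template of Lemma \ref{lem-good-set}: truncate $\Omega$, get an averaged lower bound by Tonelli, then use boundedness of the truncated integrand to convert the average into a set of good directions of positive measure.

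Choose $N\in\mathbb N$ with $\Omega_N:=\min\{\Omega,N\}\not\equiv0$. Write $m:=\int_{B({\bf 0},R)}h(y)\,dy$; if $m=0$ there is nothing to prove, so assume $m\in(0,\infty)$. If $m=\infty$, the estimate for a truncation $h\mathbf 1_{\{h\le M\}}$ holds with constant independent of $M$, and we may let $M\to\infty$. For $\xi\in\mathbb S^{n-1}$ define
\[
F_\rho(\xi):=\int_{B({\bf 0},R)}h(y)\,\Omega_N\left(\frac{\rho\xi-y}{|\rho\xi-y|}\right)dy,
\]
so that $0\le F_\rho\le Nm$.

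\textbf{Averaged lower bound.} By Tonelli,
\[
\int_{\mathbb S^{n-1}}F_\rho\,d\sigma=\int_{B({\bf 0},R)}h(y)\,I_\rho(y)\,dy,\qquad
I_\rho(y):=\int_{\mathbb S^{n-1}}\Omega_N\left(\frac{\rho\xi-y}{|\rho\xi-y|}\right)d\sigma(\xi).
\]
It therefore suffices to show $I_\rho(y)\ge c_2:=\frac12\|\Omega_N\|_{L^1(\mathbb S^{n-1})}>0$ uniformly for $|y|<R\le\rho/4$. Setting $u:=y/\rho$, we have $|u|\le1/4$, and $I_\rho(y)=\int_{\mathbb S^{n-1}}\Omega_N(\phi_u(\xi))\,d\sigma(\xi)$ with $\phi_u(\xi):=\frac{\xi-u}{|\xi-u|}$.

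\textbf{Main step: uniform Jacobian bound.} We will show that for $|u|\le1/4$ the map $\phi_u$ is a diffeomorphism of $\mathbb S^{n-1}$ whose Jacobian is bounded above and below by constants depending only on $n$. It is a diffeomorphism because $\xi\mapsto\xi-u$ maps the sphere onto the sphere centered at $-u$, which contains the origin in its interior, and radial projection from an interior point is a diffeomorphism. The Jacobian bound can be checked by an explicit computation, or more simply by continuity and compactness in $(u,\xi)\in\overline{B({\bf 0},1/4)}\times\mathbb S^{n-1}$, since at $u={\bf 0}$ the map is the identity. By change of variables,
\[
I_\rho(y)\ge c_n\|\Omega_N\|_{L^1(\mathbb S^{n-1})}=:c_2>0.
\]
(The statement presumably allows $c$ to depend on $\Omega$ through $\|\Omega_N\|_{L^1}$.) This gives
\[
\int_{\mathbb S^{n-1}}F_\rho\,d\sigma\ge c_2\,m .
\]

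\textbf{Chebyshev-type extraction.} Let $E_\rho:=\{\xi\in\mathbb S^{n-1}:F_\rho(\xi)\ge c_2m/(2\sigma(\mathbb S^{n-1}))\}$. Splitting the integral over $E_\rho$ and its complement,
\[
c_2m\le Nm\,\sigma(E_\rho)+\frac{c_2m}{2},
\]
so $\sigma(E_\rho)\ge c_2/(2N)$. Since $\Omega\ge\Omega_N$, every $\xi\in E_\rho$ satisfies \eqref{eq-farf} with constant $c_2/(2\sigma(\mathbb S^{n-1}))$. Taking $c$ to be the minimum of the two constants finishes the argument.

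The only nontrivial point is the uniform Jacobian bound for $\phi_u$. The tightness of $\rho\ge4R$ is irrelevant; any fixed ratio below $1$ between $R$ and $\rho$ suffices.
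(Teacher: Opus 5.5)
Your proposal is correct and follows the paper's proof essentially step by step. You truncate to $\Omega_N$, use Tonelli together with the uniform two-sided Jacobian bound for $\xi\mapsto(\rho\xi-y)/|\rho\xi-y|$ (the paper computes this Jacobian explicitly as $(\rho/|\rho\xi-y|)^{n-1}|\rho-\xi\cdot y|/|\rho\xi-y|$, where you also offer a compactness argument), and then extract $E_\rho$ by the same Chebyshev-type splitting using $\Omega_N\le N$. The deviations are only cosmetic: your provisional constant $\frac12\|\Omega_N\|_{L^1(\mathbb S^{n-1})}$ should simply be $c_n\|\Omega_N\|_{L^1(\mathbb S^{n-1})}$, as you later write, and your remark on the case $\int_{B({\bf 0},R)}h=\infty$ handles an edge case the paper tacitly excludes.
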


\begin{proof}
Without loss of generality, we may assume \(m_{B_R}(h)>0\),
where $B_R:=B({\bf 0},R)$ and $m_{B_R}(h)$ is as in \eqref{eq-mq}.
Choose \(N\in\mathbb N\) such that
$\Omega_N:=\min\{\Omega,N\}\not\equiv0$. Fix \(\rho\ge4R\) and
\(y\in B({\bf 0},R)\), and define
\[
\Phi_{\rho,y}:\mathbb S^{n-1}\to\mathbb S^{n-1},
\qquad
\Phi_{\rho,y}(\xi):=
\frac{\rho\xi-y}{|\rho\xi-y|}.
\]
Clearly, for any $\xi\in\mathbb{S}^{n-1}$,
$\rho\xi\neq y$ and, moreover,
\(\Phi_{\rho,y}\) is a diffeomorphism, by which we mean that it is a
one-to-one \(C^1\) mapping whose inverse is also \(C^1\). To see this, if
\(\eta=\Phi_{\rho,y}(\xi)\), then there exists a unique \(s>0\) such that
$\rho\xi-y=s\eta$, which yields that
\[
s(\eta)=
-y\cdot\eta+
\left[\rho^2-|y|^2+(y\cdot\eta)^2\right]^{\frac12},
\]
and hence $
\xi=\frac{y+s(\eta)\eta}{\rho}$
gives the inverse mapping of $\Phi_{\rho,y}$, which is a $C^1$ mapping clearly.
A basic computation of the Jacobian $J_{\rho,y}$ of the mapping $\Phi_{\rho,y}$
yields that, for any $\xi\in\mathbb S^{n-1}$,
\[
J_{\rho,y}(\xi)
=\left(\frac{\rho}{|\rho\xi-y|}\right)^{n-1}
\frac{|\rho-\xi\cdot y|}{|\rho\xi-y|}.
\]
Since \(|y|\le\frac\rho 4\), we deduce that
$\frac{3\rho}{4}\le|\rho\xi-y|\le\frac{5\rho}{4}$ and
$\rho-\xi\cdot y\ge\frac{3\rho}{4}.$
Therefore, $J_{\rho,y}(\xi)\sim 1,$
where the positive equivalence constants
depend only on $n$. This, combined with the
change of variables \(\eta=\Phi_{\rho,y}(\xi)\) and
Tonelli's theorem, further implies that
\begin{align}\label{eq-imh}
&\int_{\mathbb S^{n-1}}\int_{B({\bf 0},R)}
h(y)\Omega_N\left(\frac{\rho\xi-y}{|\rho\xi-y|}\right)\,dy\,d\sigma(\xi)\nonumber\\
&\quad\ge \int_{B({\bf 0},R)}h(y)\int_{\mathbb S^{n-1}}
\Omega_N(\Phi_{\rho,y}(\xi))\,d\sigma(\xi)\,dy\nonumber\\
&\quad=\int_{B({\bf 0},R)}h(y)\int_{\mathbb S^{n-1}}
\frac{\Omega_N(\eta)}{J_{\rho,y}(\Phi_{\rho,y}^{-1}(\eta))}
\,d\sigma(\eta)\,dy\nonumber\\
&\quad\ge c_n\|\Omega_N\|_{L^1(\mathbb{S}^{n-1})}m_{B_R}(h),
\end{align}
where the positive constant $c_n$ depends only on $n$.
Write
\(
A_\Omega:=c_n\|\Omega_N\|_{L^1(\mathbb S^{n-1})}>0
\)
and define
\[
E_\rho:=
\left\{
\xi\in\mathbb S^{n-1}:
\int_{B({\bf 0},R)}
h(y)\Omega_N\left(\frac{\rho\xi-y}{|\rho\xi-y|}\right)\,dy\ge
\frac{A_\Omega}{2\sigma(\mathbb S^{n-1})}m_{B_R}(h)
\right\}.
\]
Since $\Omega_N\le N$, from \eqref{eq-imh} and the definition of
$E_\rho$, it follows that
\[
\frac{A_\Omega}{2}m_{B_R}(h)
\le
\int_{E_\rho}I_\rho(\xi)\,d\sigma(\xi)
\le
N\sigma(E_\rho)m_{B_R}(h)
\]
and hence $\sigma(E_\rho)\ge\frac{A_\Omega}{2N}.$
Taking
$c:=\min\{
\frac{\sigma(\mathbb S^{n-1})}{2},
\frac{A_\Omega}{2N},
\frac{A_\Omega}{2\sigma(\mathbb S^{n-1})}\}$,
we obtain \eqref{eq-farf}.
This finishes the proof of Lemma \ref{lem-far-field}.
\end{proof}

Now, we are ready to prove Theorem \ref{thm-sharpness}.
\begin{proof}[Proof of Theorem \ref{thm-sharpness}]
The proof mainly follows along the same lines of the one of \cite[Theorem 2.6]{dlyyz26}.
The key difference is to estimate the lower bound of the kernel $\Omega$ by
Lemmas \ref{lem-good-set} and \ref{lem-far-field}. Thus, we only sketch the proof
and highlight the main differences from the proof of \cite[Theorem 2.6]{dlyyz26}.
Without loss of generality, we may assume that $\Omega$ is nonnegative.

Now, we consider the following three cases.

\emph{Case 1: \(p\in[1,\infty)\) and \(\gamma=0\).}
Let $f:=\mathbf 1_{B({\bf 0},2)}.$
If \(x\in B({\bf 0},1)\) and \(t\in(0,1]\), then \(B(x,t)\subset B({\bf 0},2)\). Therefore,
\[
\begin{aligned}
\mathcal{M}^\Omega(f)(x,t)
&\ge
\frac1{|B(x,t)|}
\int_{B(x,t)}
\Omega\left(\frac{x-y}{|x-y|}\right)\,dy
\\
&=
\frac1{|B({\bf 0},1)|}
\int_{B({\bf 0},1)}
\Omega\left(\frac z{|z|}\right)\,dz=:c_\Omega>0.
\end{aligned}
\]
Thus, for any \(\lambda\in(0,c_\Omega)\),
\[
B({\bf 0},1)\times(0,1]
\subset
\left\{(x,t)\in\mathbb R^{n+1}_+:\mathcal{M}^\Omega(f)(x,t)>\lambda\right\},
\]
which further implies that
\[
\lambda^p
\underset{\mathcal{M}^\Omega(f)(x,t)>\lambda}
{\int_{\mathbb R^n}\int_0^\infty}
\frac{dt}{t}\,dx
\gtrsim
\lambda^p\int_0^1\frac{dt}{t}
=\infty .
\]
This proves the desired conclusion in this case.

\emph{Case 2: \(p=1\) and \(\gamma=-n\).}
Again let $f:=\mathbf 1_{B({\bf 0},2)}.$
Applying Lemma \ref{lem-far-field} with \(R=2\) and \(h=f\),
we conclude that, for any \(\rho\ge 8\),  there exists
$E_\rho\subset \mathbb{S}^{n-1}$ such that, for any $\xi\in E_\rho$,
\[
\int_{B({\bf 0},2)}
\Omega\left(\frac{\rho\xi-y}{|\rho\xi-y|}\right)\,dy
\ge c|B({\bf 0},2)|
\]
and \(\sigma(E_\rho)\ge c\), where the positive constant $c$ depends only on
$n$ and $\Omega$.
For any \(m\in\mathbb N\) with \(m\ge 3\), define
$X_m:=\{\rho\xi:2^m<\rho\le2^{m+1},\ \xi\in E_\rho\}$
and
\(
r_m:=2^{m+2}.
\)
Then, for any \(x\in X_m\) and \(t\in(\frac{r_m}{2},r_m]\),
$B({\bf 0},2)\subset B(x,r_m)$ and
\begin{align*}
\mathcal{M}^\Omega(f)(x,t)
\ge
\frac1{|B(x,r_m)|}
\int_{B({\bf 0},2)}
\Omega\left(\frac{x-y}{|x-y|}\right)\,dy
\gtrsim t^{-n}.
\end{align*}
This implies that there exists a positive constant $\lambda_0$,
depending only on $n$ and $\Omega$, such that
\[
\bigcup_{m=3}^{\infty}X_m\times\left(\frac{r_m}{2},r_m\right]
\subset
\left\{(x,t)\in\mathbb R^{n+1}_+:
\mathcal{M}^\Omega(f)(x,t)>\lambda_0t^{-n}\right\}.
\]
Consequently,
\begin{align*}
\underset{\mathcal{M}^\Omega(f)(x,t)>\lambda_0t^{-n}}
{\int_{\mathbb R^n}\int_0^\infty}
t^{-n-1}\,dt\,dx
&
\ge
\sum_{m=3}^\infty
|X_m|
\int_{\frac{r_m}{2}}^{r_m}t^{-n-1}\,dt
\\
&
\gtrsim
\sum_{m=3}^\infty
2^{mn}2^{-mn}
=
\infty .
\end{align*}
This proves the desired conclusion when \(p=1\) and \(\gamma=-n\).

\emph{Case 3: \(p=1\) and \(\gamma\in(-n,0)\).}
Let
\(
D:=n+\gamma\in(0,n).
\)
Choose an increasing sequence \(\{n_k\}_{k=1}^\infty\subset\mathbb N\) such that,
for any $k\in\mathbb{N}$,
\(
Dn_k\ge k.
\)
For each \(k\in\mathbb N\), let \(q_k\in\mathbb N\) be chosen so that
\begin{align*}
2^{Dn_k}-1<q_k\le 2^{Dn_k}.
\end{align*}
Since \(D<n\), one has \(q_k< 2^{nn_k}\).

Let
\(
Q_0:=[0,1)^n.
\)
We construct a sequence of finite collections of dyadic cubes
\(\{\mathcal C_k\}_{k=1}^\infty\) as follows. Let \(\mathcal C_1\) consist of \(q_1\)
dyadic subcubes of \(Q_0\) with edge length \(2^{-n_1}\). Suppose that \(\mathcal C_k\)
has been constructed. For each \(Q\in\mathcal C_k\), choose \(q_{k+1}\) dyadic
subcubes of \(Q\) with edge length \(2^{-n_{k+1}}\ell(Q)\), and denote the resulting
collection by \(\mathcal C_{k+1}(Q)\). Let
\[
\mathcal C_{k+1}:=\bigcup_{Q\in\mathcal C_k}\mathcal C_{k+1}(Q).
\]
Then the cubes in each \(\mathcal C_k\)
are pairwise disjoint,
\begin{align}\label{eq-Ck-data}
\sharp\mathcal C_k=q_1\cdots q_k,
\quad\text{and}\quad
\ell(Q)=2^{-(n_1+\cdots+n_k)}
\quad\text{for all }Q\in\mathcal C_k.
\end{align}
For simplicity, write
\(
S_k:=n_1+\cdots+n_k.
\)
Define
\begin{align}\label{eq-counterexample-f}
f:=
\sum_{k=1}^\infty
\frac{2^{-\gamma S_k}}{k^{\frac{3}{2}}}
\sum_{Q\in\mathcal C_k}\mathbf 1_Q.
\end{align}
From the estimates in \cite[p.\,19]{dlyyz26},
we infer \(f\in L^1(\mathbb R^n)\).

Now, fix \(k\in\mathbb N\) and \(Q\in\mathcal C_k\). For
any \(i\in\mathbb N\), write
\(
\mathcal C_{k+i}(Q):=
\{P\in\mathcal C_{k+i}:P\subset Q\}.
\)
Define
\[
h_Q:=
\sum_{i=1}^\infty
\frac{2^{-\gamma S_{k+i}}}{(k+i)^{\frac{3}{2}}}
\sum_{P\in\mathcal C_{k+i}(Q)}\mathbf 1_P.
\]
Note that \(0\le h_Q\le f\mathbf 1_Q\) and $h_Q$ is precisely
the right-hand side of \cite[(2.20)]{dlyyz26}. Thus, using
\cite[(2.26)]{dlyyz26}, we find that
\begin{align*}
\fint_Q h_Q(y)\,dy
\gtrsim
k^{-\frac{1}{2}}\ell(Q)^\gamma .
\end{align*}
From this and Lemma \ref{lem-good-set} with \(h=h_Q\),
we infer a measurable set
$G_Q\subset Q+A\ell(Q)e$
such that
\(
|G_Q|\gtrsim |Q|
\)
and, for any \(x\in G_Q\),
\begin{align}\label{eq-rough-tail-lower}
\fint_Q h_Q(y)\Omega\left(\frac{x-y}{|x-y|}\right)\,dy
\gtrsim
k^{-\frac{1}{2}}\ell(Q)^\gamma,
\end{align}
where $A$ and $e$ are as in Lemma \ref{lem-directional-lower}.
Let $R_Q:=(A+2\sqrt n)\ell(Q)$ and $I_Q:=(\frac{R_Q}{2},R_Q].$
Note that, if \(x\in G_Q\), then \(Q\subset B(x,R_Q)\).
Using this,
\eqref{eq-counterexample-f}, and \eqref{eq-rough-tail-lower},
we conclude that, for any $x\in G_Q$ and $t\in I_Q$,
\begin{align*}
\mathcal{M}^\Omega(f)(x,t)
&\ge
\frac1{|B(x,R_Q)|}
\int_Q
h_Q(y)\Omega\left(\frac{x-y}{|x-y|}\right)\,dy
\gtrsim
R_Q^{-n}k^{-\frac{1}{2}}|Q|\ell(Q)^\gamma
\gtrsim
k^{-\frac{1}{2}}\ell(Q)^\gamma .
\end{align*}
Since \(t\sim\ell(Q)\) for \(t\in I_Q\), we deduce that there exists a
positive constant
\(c_3\), depending only on $n,\gamma$, and $\Omega$, such that,
for any \(x\in G_Q\) and \(t\in I_Q\),
\begin{align}\label{eq-max-lower-cantor}
\mathcal{M}^\Omega(f)(x,t)\ge c_3 k^{-\frac{1}{2}}t^\gamma.
\end{align}
For any \(N\in\mathbb N\), let
\(
\lambda_N:=\frac{c_3}{2}N^{-\frac{1}{2}}.
\)
Then, by \eqref{eq-max-lower-cantor}, for any \(1\le k\le N\) and
\(Q\in\mathcal C_k\),
\[
G_Q\times I_Q
\subset
\left\{(x,t)\in\mathbb R^{n+1}_+:\mathcal{M}^\Omega(f)(x,t)>\lambda_N t^\gamma\right\}.
\]
Note that the sets \(G_Q\times I_Q\), with \(1\le k\le N\) and \(Q\in\mathcal C_k\), are pairwise
disjoint.
Thus, using \eqref{eq-Ck-data} and \cite[(2.25)]{dlyyz26}, we find that, for any $N\in\mathbb N$,
\begin{align*}
&\lambda_N
\underset{\mathcal{M}^\Omega(f)(x,t)>\lambda_Nt^\gamma}
{\int_{\mathbb R^n}\int_0^\infty}
t^{\gamma-1}\,dt\,dx
\\
&\quad
\ge
\lambda_N
\sum_{k=1}^N
\sum_{Q\in\mathcal C_k}
|G_Q|
\int_{I_Q}t^{\gamma-1}\,dt
\gtrsim
N^{-\frac{1}{2}}
\sum_{k=1}^N
\sum_{Q\in\mathcal C_k}
|Q|\ell(Q)^\gamma
\\
&\quad
=
N^{-\frac{1}{2}}
\sum_{k=1}^N
\prod_{j=1}^k\frac{q_j}{2^{Dn_j}}
\gtrsim
N^{-\frac{1}{2}}
\sum_{k=1}^N 1
\gtrsim
N^{\frac{1}{2}}.
\end{align*}
Letting \(N\to\infty\), we obtain the desired conclusion when \(p=1\) and \(\gamma\in(-n,0)\).

Combining the above three cases
then completes the proof of
Theorem \ref{thm-sharpness}.
\end{proof}

\section{Applications}\label{s3}
In this section, we provide several
applications of Theorem \ref{thm-diag} by three subsections.
In Subsection \ref{s3.1},
we obtain weak type $(1,1)$ estimates for some generalized Poisson integrals
whose kernels do not need additional logarithmic integrability.
Subsection \ref{s3.2} is devoted to strengthening the upper estimate in
Theorem \ref{thm-up-p=1} by the lifted operator $\mathcal M^\ast_\Omega$.
In Subsection \ref{s3.3}, we establish a new weak-type representation for
truncated rough singular integrals, which further gives a characterization
of Hardy spaces.

\subsection{Weak Type $(1,1)$ Estimates for Generalized Poisson Integrals}\label{s3.1}

In this subsection, we aim to answer the question posed by
Sj\"ogren and Soria \cite[p.\,228]{ss96} regarding the weak-type estimate \eqref{eq-io}
of generalized Poisson integrals and extend it to
the higher-dimensional case.

Precisely speaking,
we aim to determine whether the estimate \eqref{eq-io}
holds for $\alpha<0$ and general $h$ without
additional logarithmic integrability. Indeed,
by the change of variables $y=t^{-1}$ and $\gamma=-\alpha > 0$,
the estimate \eqref{eq-io} becomes,
with the implicit positive constant independent
of $f$ and $h$,
\begin{align}\label{eq-im}
\sup_{\lambda\in(0,\infty)}\lambda
\int_{0}^{\infty}\left|\left\{x\in\mathbb{R}:
t^{-(1+\gamma)}\int_{\mathbb R}f(y)h\left(\frac{x-y}{t}\right)>\lambda \right\} \right|
t^{\gamma-1}\,dt\,dx
\lesssim \|h\|_{L^1(\mathbb R)}\|f\|_{L^1(\mathbb R)},
\end{align}
which is a one-dimensional case of the following conclusion.
\begin{theorem}\label{thm:rough_application}
Let $n\in\mathbb N\cap[2,\infty)$, $\gamma\in(0,\infty)$, and
$\Omega\in L(\log L)(\mathbb{S}^{n-1})$ be
a nonnegative function.
Suppose that, for any $x \in \mathbb{R}^n \setminus \{{\bf 0}\}$,
\begin{equation*}
h(x):= \Omega\left(\frac{x}{|x|}\right) \phi(|x|),
\end{equation*}
where $\phi: [0, \infty) \to [0, \infty)$ is a nonincreasing
$L^1$-function. Define the integral operator $T_h$ by setting,
for any $f \in L^1_{\rm loc}(\mathbb{R}^n)$ and $(x,t) \in \mathbb{R}_{+}^{n+1}$,
\begin{equation*}
T_h(f)(x,t) := t^{-(n+\gamma)} \int_{\mathbb{R}^n} f(y)
h\left(\frac{x-y}{t} \right)\,dy.
\end{equation*}
Then there exists a positive constant $C$ such that, for any $f\in L^1(\mathbb R^{n})$,
\begin{align}\label{eq:weak_bound}
&\sup_{\lambda \in(0,\infty)} \lambda\nu_\gamma\left(\left\{ (x,t) \in \mathbb{R}_{+}^{n+1} :
|T_h(f)(x,t)| > \lambda \right\}\right) \nonumber\\
&\quad\le C \|\phi(|\cdot|)\|_{L^1(\mathbb{R}^n)} \|f\|_{L^1(\mathbb{R}^n)},
\end{align}
where $\nu_\gamma$ is as in \eqref{dv} and
the constant $C$ depends only on $n$, $\gamma$, and $|\Omega|_{L(\log L)(\mathbb S^{n-1})}$.
Moreover, \eqref{eq:weak_bound} also holds for $n=1$ and $\Omega\equiv 1$.
\end{theorem}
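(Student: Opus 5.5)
The plan is to dominate $T_h$ by a superposition of the operators $T$ already handled in the proof of Theorem \ref{thm-up-p=1}, and then to add up the resulting weak-type bounds using the Stein--N. Weiss adding-up lemma, as announced in the introduction. We may assume $f\ge0$. Since $\phi$ is nonincreasing, we use the layer-cake representation
\[
\phi(s)=\int_0^\infty \mathbf 1_{[0,r)}(s)\,d\mu(r)
\]
(up to the value at jumps, irrelevant a.e.), with $\mu:=-d\phi$ a nonnegative measure on $(0,\infty)$, assuming $\phi(\infty)=0$, which holds since $\phi\in L^1$. Then
\[
T_h f(x,t)=\int_0^\infty t^{-(n+\gamma)}\int_{|x-y|<rt} f(y)\,\Omega\Big(\frac{x-y}{|x-y|}\Big)dy\,d\mu(r),
\]
and the inner quantity equals $(r/2)^{D}\,Tf(x,rt/2)$, where $D=n+\gamma$ and $T$ is the operator from the proof of Theorem \ref{thm-up-p=1}. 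Moreover $\|\phi(|\cdot|)\|_{L^1}=c_n\int_0^\infty r^n\,d\mu(r)$.

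Next we compute the weak norm of each piece. Let $S_r f(x,t):=r^{D}\,Tf(x,rt/2)$. By the change of variables $s=rt/2$ and $d\nu_\gamma$-homogeneity (the measure scales as $t^\gamma$), we get
\[
\nu_\gamma(\{S_rf>\lambda\})=(2/r)^\gamma\,\nu_\gamma(\{Tf>\lambda r^{-D}\}).
\]
By \eqref{1}, this is at most $C(1+|\Omega|_{L\log L})\,(2/r)^\gamma r^{D}\|f\|_1/\lambda\sim r^n\|f\|_1/\lambda$. Thus each $S_r$ is weak type $(1,1)$ from $L^1(\mathbb R^n)$ to $L^{1,\infty}(\mathbb R^{n+1}_+,\nu_\gamma)$, with quasi-norm $\lesssim r^n$. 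This is exactly the weight that integrates against $\mu$ to give $\|\phi(|\cdot|)\|_{L^1}$.

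The main obstacle is summing the pieces, because $L^{1,\infty}$ is not normed; a naive superposition would lose a logarithm. Here I would invoke the Stein--N. Weiss adding-up lemma, which applies because each $S_rf$ is nonnegative and the quasi-norms are controlled. First, discretize $\mu$ dyadically, writing $r\in[2^j,2^{j+1})$ and dominating $S_r$ by $2^{D}S_{2^{j+1}}$-type operators. This makes $T_hf\lesssim\sum_j a_j\,\tilde S_j f$ with $\sum_j a_j 2^{jn}\sim\int r^n\,d\mu$. Second, apply the lemma to $g_j:=a_j2^{jn}\cdot 2^{-jn}\tilde S_jf$, each of which has $L^{1,\infty}$ norm $\lesssim a_j2^{jn}\|f\|_1$. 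The lemma requires a logarithmic entropy factor $\sum c_j\log(\sum c_k/c_j)$. To avoid it, I would instead use the version of the adding-up lemma for functions with uniformly controlled $L^1$-\emph{averages} on level sets, or use the specific structure: $\tilde S_j f$ are all dominated by one rescaled maximal operator.

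More simply, observe that $T_hf(x,t)\lesssim \|\phi(|\cdot|)\|_{L^1}\cdot t^{-\gamma}\sup_{s}\,[\cdots]$ does not work directly for rough $\Omega$. So the fallback is to rerun the proof of Theorem \ref{thm-up-p=1} with the kernel $h$ in place of $\mathbf 1_{B(0,2)}\Omega$. The bad-cube decomposition at scale $\eta\lambda$ is unchanged. Lemma \ref{lem-main}(ii) is applied to each dyadic annulus of $\phi$ with $\beta$ scaled by the annulus weight, and the exceptional sets $\mathcal B$ are enlarged to $Q^*_r$ with measure $r^n\ell(Q)^D$. Summing the exceptional-set bounds in $r$ is then linear, since they are genuine measures. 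The remaining $L^1(\nu_\gamma)$ Chebyshev bound, as in \eqref{15}, is also linear in $\int r^n d\mu$.

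For $n=1$ and $\Omega\equiv1$, the same argument works verbatim: Lemma \ref{lem-main} and Lemma \ref{lem-cz-decomposition-positive} never used $n\ge2$, and the $L\log L$ factor becomes a constant.
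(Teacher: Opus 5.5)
Your layer-cake reduction and scaling computation are correct, and they match the paper's setup (its $R(\lambda)$, blocks $E_k$, and operator $A^\Omega$). The layer of radius $r$ is a dilate of the operator $T$ from the proof of Theorem~\ref{thm-up-p=1}, with $L^{1,\infty}(\mathbb R^{n+1}_+,\nu_\gamma)$ quasi-norm $\lesssim r^n\|f\|_{L^1(\mathbb R^n)}$. The gap is the step you flag yourself, adding up the layers, and none of your three suggestions closes it.

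\emph{The first two suggestions.} They are not formulated. The natural common majorant also fails, even for $\Omega\equiv1$: $\sup_r r^{\gamma}Tf(x,rt/2)\lesssim t^{-\gamma}M^\Omega(f)(x)$, but $\nu_\gamma(\{(x,t):t^{-\gamma}M^\Omega(f)(x)>\lambda\})=\|M^\Omega(f)\|_{L^1(\mathbb R^n)}/(\gamma\lambda)=\infty$ for $f\not\equiv0$.

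\emph{The fallback.} It breaks at ``summing the exceptional-set bounds in $r$ is then linear, since they are genuine measures''. The exceptional region is a union of sets, and its indicator does not carry the layer weight $d\mu(r)$.
For $r\ge1$, the sets $Q^*_r\times(0,\ell(Q))$ increase with $r$. Their union is therefore only bounded by $r_{\max}^n\sum_{Q\in\mathcal Q}\ell(Q)^D$, where $r_{\max}$ is the largest radius charged by $\mu$. This is not controlled by $\int r^n\,d\mu(r)$, and it is infinite when $\phi$ has unbounded support.
For $r<1$, the size $r^n\ell(Q)^D$ is not the relevant one. A cube with $rt<\ell(Q)\le t$ meeting $B(x,rt)$ can contribute $\sim1$, not $O(r^n)$, to the part with $\Omega\le\beta$. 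You must then either remove $\{(x,t):\operatorname{dist}(x,Q)<rt,\ t<\ell(Q)/r\}$, of measure $\sim r^{-\gamma}\ell(Q)^D$, or put these cubes into the Chebyshev part \eqref{15}. The latter costs $\sim\int_{\ell(Q)}^{\ell(Q)/r}r^n\,\frac{dt}{t}=r^n\log\frac1r$ per unit mass, which is exactly the condition $\int h(x)|\log|x||\,dx<\infty$ quoted in the introduction.
Dilation can normalize $r_{\max}$ or $r_{\min}$, but not their ratio. Making each layer's exceptional set proportional to its weight forces layer-dependent heights $\lambda_j$ with $\sum_j\lambda_j\le\lambda$, which brings back a logarithmic (or worse) loss. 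So the fallback only covers $\phi$ whose layers occupy boundedly many dyadic scales.

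The paper closes this step with Lemma~\ref{lem:stein_weiss}. It writes $T_h(f)\le2^{n+\gamma}\sum_kc_kA^\Omega(f)(x,2^{-k}t)$ with $c_k=\int_{E_k}R(\lambda)^{n+\gamma}\,d\lambda$, and checks $\sum_kc_k\log(1/c_k)<\infty$ via $c_k\le2^{-k\gamma}x_k$. Your own scaling identity shows that this check does not answer your objection. Indeed, $\|A^\Omega(f)(\cdot,2^{-k}\cdot)\|_{L^{1,\infty}(\mathbb R^{n+1}_+,\nu_\gamma)}=2^{k\gamma}\|A^\Omega(f)\|_{L^{1,\infty}(\mathbb R^{n+1}_+,\nu_\gamma)}$. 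To meet the normalization $\sup_s s\,\nu_\gamma(\{g_k>s\})\le1$ required in Lemma~\ref{lem:stein_weiss}, the coefficients must therefore be $2^{k\gamma}c_k\in[2^{-\gamma}x_k,x_k]$. The decay $2^{-k\gamma}$ that makes the paper's sum converge cancels, and the relevant entropy is comparable to $\sum_kx_k\log(1/x_k)$. This diverges, for instance, when $x_k\sim1/(k\log^2k)$, which already happens for $n=1$ and $\Omega\equiv1$. It is a condition of the same kind as the $\log|\log|x||$-hypothesis recalled in the introduction.

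Hence the paper's last step, as written, cannot be copied to repair your proof. The adding-up route, yours or the paper's, gives \eqref{eq:weak_bound} only under an entropy assumption on the dyadic layer masses $x_k$ of $\phi(|\cdot|)$. The general case needs a further idea that exploits the special structure of the dilated pieces.
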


\begin{remark}\label{rem:open_question}
Theorem \ref{thm:rough_application} when $n=1$ and $\Omega\equiv1$
implies \eqref{eq-im} and hence \eqref{eq-io}. Thus, it gives an
affirmative answer to the question posed in \cite[p.\,228]{ss96}.
Moreover, Theorem \ref{thm:rough_application} also provides a higher-dimensional
generalization of the aformentioned question.
\end{remark}

To prove Theorem \ref{thm:rough_application},
we need the following lemma; see \cite[Lemma 2.3]{snw69}.
\begin{lemma}[Stein--N. Weiss Adding-Up Lemma]\label{lem:stein_weiss}
Let $(X, \mu)$ be a measure space and $\{g_j\}_{j=1}^\infty$ be a sequence of
nonnegative measurable functions on $X$ satisfying that, for any $j \in \mathbb{N}$,
\begin{equation*}
\sup_{s\in(0,\infty)}s\mu(\{x \in X : g_j(x) > s\}) \le 1.
\end{equation*}
Let $\{c_j\}_{j=1}^\infty$ be a sequence of positive numbers
such that $\sum_{j=1}^\infty c_j = 1$, and define the entropy constant
\begin{equation*}
K := \sum_{j=1}^\infty c_j \log\left(\frac{1}{c_j}\right).
\end{equation*}
Then, for any $s \in (0,\infty)$,
\begin{equation*}
\mu\left(\left\{x \in X : \sum_{j=1}^\infty c_j g_j(x) > s \right\}\right) \le \frac{2(K+2)}{s}.
\end{equation*}
\end{lemma}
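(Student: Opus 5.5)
The plan is a truncation argument at levels adapted to each $c_j$, followed by Chebyshev's inequality on a middle-range piece whose $L^1$ norm is controlled by the entropy. Fix $s\in(0,\infty)$ and write $F:=\sum_{j=1}^\infty c_jg_j$. Each $g_j$ is split into three height ranges: low values $g_j\le as$, intermediate values $as<g_j\le bs/c_j$, and high values $g_j>bs/c_j$. Here $a\in(0,1)$ and $b\in(0,\infty)$ are absolute constants, to be tuned at the end; the first choice I would try is $a=b=\frac12$.

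\textbf{Step 1 (exceptional set).} Let $E:=\bigcup_j\{x\in X: g_j(x)>bs/c_j\}$. By the weak-type hypothesis on each $g_j$,
\[
\mu(E)\le\sum_{j}\frac{c_j}{bs}=\frac{1}{bs},
\]
using $\sum_j c_j=1$.

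\textbf{Step 2 (low part).} For every $x$,
\[
\sum_j c_jg_j(x)\mathbf 1_{\{g_j\le as\}}(x)\le as\sum_jc_j=as.
\]
Hence, for $x\notin E$, the inequality $F(x)>s$ forces
\[
G(x):=\sum_j c_jg_j(x)\mathbf 1_{\{as<g_j\le bs/c_j\}}(x)>(1-a)s.
\]

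\textbf{Step 3 (middle part, where the entropy enters).} By the layer-cake formula and the weak-type bound $\mu(\{g_j>u\})\le 1/u$,
\[
\int_X g_j\mathbf 1_{\{as<g_j\le bs/c_j\}}\,d\mu
\le as\,\mu(\{g_j>as\})+\int_{as}^{bs/c_j}\mu(\{g_j>u\})\,du
\le 1+\log\frac{b}{ac_j}.
\]
Chebyshev's inequality then gives
\[
\mu(\{G>(1-a)s\})\le\frac{1}{(1-a)s}\sum_jc_j\left[1+\log\frac{b}{a}+\log\frac1{c_j}\right]
=\frac{K+1+\log(b/a)}{(1-a)s}.
\]
If some $c_j>b/a$, the middle range is empty and that term is simply dropped; since $c_j\le 1$, with $b\ge a$ this never happens. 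Adding Step 1 yields
\[
\mu(\{F>s\})\le\frac{1}{s}\left[\frac1b+\frac{K+1+\log(b/a)}{1-a}\right],
\]
which is of the required form $C(K+1)/s$.

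\textbf{Main obstacle.} The delicate point is only the numerical bookkeeping needed to reach exactly $\frac{2(K+2)}{s}$. The factor $\frac{1}{1-a}$ must equal $2$, which forces $a=\frac12$. The remaining constant is then $\frac1b+2+2\log(2b)$. Minimizing over $b$ (at $b=\frac12$) gives $4$, which is precisely the required bound. So with $a=b=\frac12$ the crude estimate lands exactly at $2K+4$.

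If some step is lossier than expected, I would sharpen the layer-cake step by keeping the negative boundary term $-\frac{bs}{c_j}\,\mu(\{g_j>bs/c_j\})$. Alternatively, I would merge Step 1 into Step 3 by truncating at $bs/c_j$ rather than discarding $E$. Since the lemma is quoted from \cite[Lemma 2.3]{snw69}, matching the stated constant is the only real difficulty.
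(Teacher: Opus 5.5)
Your argument is correct: with $a=b=\frac12$, the exceptional set contributes $\frac{2}{s}$ and Chebyshev on the middle range contributes $\frac{2(K+1)}{s}$, which gives exactly $\frac{2(K+2)}{s}$. The paper has no proof of its own (it quotes the lemma from Stein--N.~Weiss), and your three-range truncation at heights $\frac{s}{2}$ and $\frac{s}{2c_j}$ is essentially the standard proof from that source.
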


Next, we show Theorem \ref{thm:rough_application}.
\begin{proof}[Proof of Theorem \ref{thm:rough_application}]
Without loss of generality, we may assume that $f \ge 0$ and
$\|\phi(|\cdot|)\|_{L^1(\mathbb{R}^n)} = 1$.
By a standard limiting argument, we may also assume that $\mathrm{supp}(\phi) \subset [0, 1]$.

First, we define the linear operator $A^\Omega$
associated with the rough kernel $\Omega$ at the scale $t\in(0,\infty)$ by setting, for any
$f\in L^1_{\rm loc}(\mathbb R^n)$ and $(x,t)\in\mathbb R^{n+1}_+$,
\begin{equation*}
A^{\Omega}(f)(x,t) := t^{-(n+\gamma)} \int_{|x-y| < t}
f(y) \Omega\left(\frac{x-y}{|x-y|}\right) dy.
\end{equation*}
Observe that, for any $(x,t) \in \mathbb{R}_{+}^{n+1}$,
$A^{\Omega}(f)(x,t) \le t^{-\gamma} \mathcal{M}^\Omega(f)(x,t)$, where $\mathcal{M}^\Omega$ is
as in \eqref{eq-M}. Since $\gamma > 0$, from Theorem \ref{thm-diag}(ii),
it follows that
\begin{equation}\label{eq:base_weak_bound}
\left\|A^{\Omega}(f)\right\|_{L^{1,\infty}(\mathbb{R}_{+}^{n+1},\nu_\gamma)} \lesssim
\|f\|_{L^1(\mathbb{R}^n)}.
\end{equation}

Define the radius function $R(\lambda) := \sup(\{r \ge 0 : \phi(r) > \lambda\}\cup\{0\})$.
Clearly, for any $\lambda\in(0,\infty)$, $R(\lambda)\in[0,1]$
because $\operatorname{supp}(\phi)\subset [0,1]$.
By the assumption that $\phi$ is nonnegative and nonincreasing,
we find that, for almost every $x\in\mathbb R^n$,
\begin{equation}\label{eq-px}
\phi(|x|) = \int_0^\infty {\bf 1}_{\{y\in\mathbb{R}^n : \phi(|y|) > \lambda\}}(x) \,d\lambda
= \int_0^\infty {\bf 1}_{B(0, R(\lambda))}(x) \,d\lambda,
\end{equation}
where $d\lambda$ denotes the standard one-dimensional Lebesgue measure
and $B(0,R(\lambda)):=\emptyset$ if $R(\lambda)=0$.
Using Tonelli's theorem, we conclude that
\begin{equation}\label{eq-1r}
1=\|\phi(|\cdot|)\|_{L^1(\mathbb{R}^n)} = \int_0^\infty |B(0, R(\lambda))| \,d\lambda
= v_n \int_0^\infty R(\lambda)^n \,d\lambda,
\end{equation}
where $v_n$ is the volume of the unit ball in $\mathbb{R}^n$.
Substituting \eqref{eq-px} into the definition of $T_h(f)$ and applying Tonelli's theorem,
we find that, for any $(x,t)\in\mathbb R^{n+1}_+$,
\begin{align}\label{eq-Th}
T_h(f)(x,t)
&= t^{-(n+\gamma)} \int_0^\infty  \int_{|x-y| < R(\lambda)t} f(y) \Omega
\left(\frac{x-y}{|x-y|}\right)\, dy\, d\lambda \nonumber\\
&= \int_0^\infty R(\lambda)^{n+\gamma} A^{\Omega}(f)(x, R(\lambda)t) \,d\lambda.
\end{align}
Now, for any $k\in\mathbb Z_+$, define
$E_k := \{\lambda \in(0,\infty) : 2^{-k-1} < R(\lambda) \le 2^{-k}\}.$
For any $\lambda \in E_k$, we have
\begin{align*}
A^{\Omega}(f)(x, R(\lambda)t)
&\le (2^{-k-1}t)^{-(n+\gamma)} \int_{|x-y| < 2^{-k}t} f(y)
\Omega\left(\frac{x-y}{|x-y|}\right) dy \\
&= 2^{n+\gamma} A^{\Omega}(f)(x, 2^{-k}t).
\end{align*}
From this and \eqref{eq-Th}, we infer that, for any $(x,t)\in\mathbb R^{n+1}_+$,
\begin{equation}\label{eq-ta}
T_h(f)(x,t) \le 2^{n+\gamma} \sum_{k=0}^\infty c_k A^{\Omega}(f)(x, 2^{-k}t),
\end{equation}
where $c_k := \int_{E_k} R(\lambda)^{n+\gamma} \,d\lambda$.
To apply the Stein--N. Weiss adding-up lemma, we verify
the entropy condition $\sum_{k=0}^\infty c_k \log(1/c_k) < \infty$.
Indeed, for any $k\in\mathbb Z_+$,
define $x_k := \int_{E_k} R(\lambda)^n \,d\lambda$.
Using \eqref{eq-1r}, we find that $\sum_{k=0}^\infty x_k = v_n^{-1} \le 1$ and, for any $k\in\mathbb Z_+$,
\begin{equation*}
c_k = \int_{E_k} R(\lambda)^\gamma R(\lambda)^n \,d\lambda
\le 2^{-k\gamma} \int_{E_k} R(\lambda)^n \,d\lambda = 2^{-k\gamma} x_k.
\end{equation*}
Using this estimate, the entropy condition is bounded by
\begin{align*}
\sum_{k=0}^\infty c_k \log\left(\frac{1}{c_k}\right)
&\le \sum_{k=0}^\infty 2^{-k\gamma} x_k \log\left(\frac{1}{2^{(-k-1)\gamma} x_k}\right) \\
&\sim \gamma \sum_{k=0}^\infty (k+1) 2^{-k\gamma} x_k + \sum_{k=0}^\infty 2^{-k\gamma}
\left( x_k \log\frac{1}{x_k} \right)<\infty,
\end{align*}
where the last step used $\gamma > 0$.
Therefore, the entropy condition is satisfied, which, combined with Lemma \ref{lem:stein_weiss},
\eqref{eq-ta}, and \eqref{eq:base_weak_bound}, further implies the desired conclusion.

Finally, by applying \eqref{eq-lifted-hl-weak} instead of Theorem \ref{thm-diag}(ii)
in \eqref{eq:base_weak_bound}, we easily find that
the present theorem also holds for $n=1$ and $\Omega \equiv 1$.
This finishes the proof of Theorem \ref{thm:rough_application}.
\end{proof}

\subsection{Weak Type $(1,1)$ Estimates for Lifted
Operator $\mathcal{M}^\ast_{\Omega}$}\label{s3.2}
In this subsection, our target is to
introduce a lifted version of the operator $M^\ast_\Omega$
defined in \eqref{eq-Ms} and establish its weak type $(1,1)$ estimates.

Given $\omega\in\mathbb S^{n-1}$, the \emph{lifted directional maximal operator
$\mathcal{M}_{\omega}$} is defined by setting, for any
$f \in L_{\rm loc}^1(\mathbb{R}^n)$ and $(x,t) \in \mathbb{R}_{+}^{n+1}$,
\begin{equation*}
\mathcal{M}_{\omega}(f)(x,t) := \sup_{r\in[t,2t)} \frac{1}{2r} \int_{-r}^r |f(x-s\omega)| \,ds.
\end{equation*}
Given $\Omega\in L^1(\mathbb S^{n-1})$,
the \emph{lifted operator $\mathcal M^\ast_\Omega$} is defined
by setting, for any $f \in L_{\rm loc}^1(\mathbb{R}^n)$ and $(x,t) \in \mathbb{R}_{+}^{n+1}$,
\begin{equation}\label{eq-lMw}
\mathcal{M}_{\Omega}^\ast(f)(x,t) := \int_{\mathbb{S}^{n-1}} |\Omega(\omega)|
\mathcal{M}_{\omega}(f)(x,t) \,d\sigma(\omega).
\end{equation}
Using polar coordinates and Tonelli's theorem, we easily find that
the lifted rough maximal operator $\mathcal{M}^\Omega$
is dominated by the operator $\mathcal{M}^\ast_\Omega$; that is,
for any $f\in L^1_{\rm loc}(\mathbb R^n)$ and $(x,t) \in \mathbb{R}_{+}^{n+1}$,
\begin{align*}
\mathcal{M}^\Omega(f)(x,t)\le \mathcal{M}^\ast_\Omega(f)(x,t),
\end{align*}
where $\mathcal{M}^\Omega$ is as in \eqref{eq-M}.

For the lifted
operator $\mathcal{M}_{\Omega}^\ast$, we can establish its weak type
$(1,1)$ estimates as follows.
\begin{theorem}\label{thm-stronger-2.5}
Let $n\in\mathbb{N}\cap[2,\infty)$,
$\gamma\in(-\infty,-n)\cup(0,\infty)$,
and $\Omega\in L(\log L)(\mathbb{S}^{n-1})$.
Then there exists a positive constant $C$, depending only on $n$ and $\gamma$,
such that, for any $f\in L^1(\mathbb{R}^n)$,
\eqref{eq-strong-up-p=1} holds.
\end{theorem}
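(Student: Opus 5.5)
The plan is to reduce $\mathcal M^\ast_\Omega$ to a rough average of the form already handled in Theorem \ref{thm-up-p=1}, at a sequence of dyadically shrunk scales, and then sum these pieces with the Stein--N. Weiss adding-up lemma (Lemma \ref{lem:stein_weiss}). The point is that the one-dimensional averages $\mathcal M_\omega$ carry the extra singular weight $|z|^{1-n}$ in polar coordinates, and this weight is what the dyadic decomposition absorbs. Without loss of generality, $f\ge0$ and $\Omega\ge0$.

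\emph{Step 1 (pointwise domination).} Since $r\in[t,2t)$, we have
\[
\mathcal M_\omega(f)(x,t)\le \frac1{2t}\int_{-2t}^{2t}f(x-s\omega)\,ds .
\]
Integrating against $\Omega(\omega)\,d\sigma(\omega)$ and passing to polar coordinates $z=s\omega$ (splitting $s>0$ and $s<0$) gives
\[
t^{-\gamma}\mathcal M^\ast_\Omega(f)(x,t)\le \frac{t^{-1-\gamma}}2\int_{|z|<2t}f(x-z)\,\widetilde\Omega\left(\frac z{|z|}\right)|z|^{1-n}\,dz,
\]
where $\widetilde\Omega(\xi):=\Omega(\xi)+\Omega(-\xi)$. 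Clearly $|\widetilde\Omega|_{L(\log L)(\mathbb S^{n-1})}\lesssim|\Omega|_{L(\log L)(\mathbb S^{n-1})}$.

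\emph{Step 2 (dyadic decomposition of the weight).} Split $\{|z|<2t\}$ into the annuli $\{2^{-k}t\le|z|<2^{1-k}t\}$, $k\in\mathbb Z_+$. On the $k$-th annulus we have $|z|^{1-n}\le 2^{k(n-1)}t^{1-n}$. With $D:=n+\gamma$ and
\[
Tf(x,s):=s^{-D}\int_{|x-y|<2s}f(y)\,\widetilde\Omega\left(\frac{x-y}{|x-y|}\right)dy
\]
as in the proof of Theorem \ref{thm-up-p=1}, this yields
\[
t^{-\gamma}\mathcal M^\ast_\Omega(f)(x,t)\lesssim\sum_{k=0}^\infty 2^{-k(1+\gamma)}\,Tf(x,2^{-k}t).
\]

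\emph{Step 3 (weak bounds of the pieces via scaling).} The proof of Theorem \ref{thm-up-p=1}, run with $\widetilde\Omega$ in place of $\Omega$, gives
\[
\|Tf\|_{L^{1,\infty}(\nu_\gamma)}\le C_0:=C\bigl(1+|\Omega|_{L(\log L)(\mathbb S^{n-1})}\bigr)\|f\|_{L^1(\mathbb R^n)}.
\]
For $g_k(x,t):=Tf(x,2^{-k}t)$, the change of variables $t'=2^{-k}t$ gives
\[
\nu_\gamma(\{g_k>\lambda\})=2^{k\gamma}\,\nu_\gamma(\{Tf>\lambda\}).
\]
Hence $2^{-k(1+\gamma)}g_k$ has weak-$L^1(\nu_\gamma)$ quasi-norm at most $2^{-k}C_0$, and this holds for both $\gamma>0$ and $\gamma<-n$. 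This is where the growth of the factor $2^{-k(1+\gamma)}$ when $\gamma<-n$ is exactly compensated by the scaling of $\nu_\gamma$.

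\emph{Step 4 (summation).} Set $c_k:=2^{-k-1}$, so $\sum_k c_k=1$ and $K=\sum_k c_k\log(1/c_k)<\infty$. Set $G_k:=2^{-k\gamma}g_k/(2C_0)$, so that $\sup_s s\,\nu_\gamma(\{G_k>s\})\le1$. Then
\[
t^{-\gamma}\mathcal M^\ast_\Omega(f)\lesssim C_0\sum_k c_kG_k ,
\]
and Lemma \ref{lem:stein_weiss} gives $\nu_\gamma(\{t^{-\gamma}\mathcal M^\ast_\Omega f>\lambda\})\lesssim C_0/\lambda$, which is \eqref{eq-strong-up-p=1}.

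The main obstacle is Step 3. One has to check that the scaling exactly cancels the factor $2^{-k(1+\gamma)}$, and that Theorem \ref{thm-up-p=1} indeed applies to the operator $T$ with $\widetilde\Omega$, not only to $\mathcal M^\Omega$; its proof bounds exactly this $T$, so I would cite that proof directly. The rest is bookkeeping.
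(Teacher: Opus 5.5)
Your proof is correct, and its first half coincides with the paper's. The paper also dominates $\mathcal M_\omega$ by the average over $[-2t,2t]$, symmetrizes $\Omega$, and splits the radial variable into $(2^{-j}t,2^{-j+1}t]$. It obtains $\mathcal M^\ast_\Omega(f)(x,t)\lesssim\sum_{j\ge0}2^{-j}\mathcal M^{\widetilde\Omega}(f)(x,2^{-j+1}t)$, which is your Step 2 with each piece normalized by $t^{-\gamma}$ instead of $(2^{-k}t)^{-D}$.

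The two routes differ in how the pieces are bounded and summed.

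\begin{itemize}
\item \emph{The paper} reads the $j$-th piece as $\mathcal M^{\widetilde\Omega}_\theta(f)(x,t)$ with $\theta=2^{-j+1}$ and invokes Theorem~\ref{thm-diag}(ii). That bound is uniform in $\theta$, and this uniformity is exactly the dilation invariance you verify by hand. It then sums via the Aoki--Rolewicz theorem: up to equivalence, $\|\cdot\|^\varrho_{L^{1,\infty}(\mathbb R^{n+1}_+,\nu_\gamma)}$ is $\varrho$-subadditive for some $\varrho\in(0,1)$, so the geometric weights give $\sum_j2^{-j\varrho}<\infty$.
\item \emph{You} compute the scaling $\nu_\gamma(\{g_k>\lambda\})=2^{k\gamma}\nu_\gamma(\{Tf>\lambda\})$ explicitly. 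This makes transparent why the growing factors $2^{-k(1+\gamma)}$ are harmless when $\gamma<-n$. You then sum with the Stein--N. Weiss lemma (Lemma~\ref{lem:stein_weiss}), already used for Theorem~\ref{thm:rough_application}.
\end{itemize}

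Your route gives an explicit absolute summation constant. It also rests on a principle needing only finite entropy $\sum_kc_k\log(1/c_k)<\infty$, which is weaker than the condition $\sum_kc_k^\varrho<\infty$ behind the Aoki--Rolewicz argument; for the geometric weights here, both are available.

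The paper's route is shorter and uses only the statement of Theorem~\ref{thm-diag}(ii). You instead cite estimate \eqref{1} from inside the proof of Theorem~\ref{thm-up-p=1}. If you prefer to avoid that, note that $Tf(x,s)\lesssim s^{-\gamma}\mathcal M^{\widetilde\Omega}(f)(x,2s)$. This reduces your Step 3 to Theorem~\ref{thm-diag}(ii) with $\theta=2$.
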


\begin{proof}
Fix $f\in L^1(\mathbb{R}^n)$. Without loss of generality,
we may assume that $f$ is nonnegative.
For any given $(x,t)\in\mathbb{R}_{+}^{n+1}$, by the definitions
of $\mathcal{M}_\omega$ and $\mathcal{M}_{\Omega}^\ast$, we find that
\begin{align*}
\mathcal{M}_{\Omega}^\ast(f)(x,t)
&\le \frac{1}{2t}\int_{\mathbb{S}^{n-1}}|\Omega(\omega)|
\int_{-2t}^{2t}f(x-s\omega)\,ds\,d\sigma(\omega) \\
&= \frac{1}{2t}\int_{\mathbb{S}^{n-1}}|\Omega(\omega)|
\left[\int_{0}^{2t}f(x-s\omega)\,ds + \int_{0}^{2t}
f(x+s\omega)\,ds\right]\,d\sigma(\omega).
\end{align*}
Using the change of variables $\omega \mapsto -\omega$
on $\mathbb{S}^{n-1}$ for the second integral,
we obtain
\begin{equation*}
\int_{\mathbb{S}^{n-1}}|\Omega(\omega)|\int_{0}^{2t}f(x+s\omega)\,ds\,d\sigma(\omega)
= \int_{\mathbb{S}^{n-1}}|\Omega(-\omega)|\int_{0}^{2t}f(x-s\omega)\,ds\,d\sigma(\omega).
\end{equation*}
Define the symmetrized kernel $\widetilde{\Omega}(\omega) :=
\frac{1}{2}[|\Omega(\omega)| + |\Omega(-\omega)|]$.
Then the upper bound can be rewritten as
\begin{equation*}
\mathcal{M}_{\Omega}^\ast(f)(x,t) \le \frac{1}{t}\int_{\mathbb{S}^{n-1}}
\widetilde{\Omega}(\omega)\int_{0}^{2t}f(x-s\omega)\,ds\,d\sigma(\omega).
\end{equation*}
We decompose the radial integral over the interval $(0,2t]$ into a
disjoint union of dyadic intervals
$\bigcup_{j=0}^{\infty}(2^{-j}t, 2^{-j+1}t]$. Then
\begin{align}\label{eq-mms}
\mathcal{M}_{\Omega}^\ast(f)(x,t)
&\le \frac{1}{t}\sum_{j=0}^{\infty}\int_{\mathbb{S}^{n-1}}
\int_{2^{-j}t}^{2^{-j+1}t}f(x-s\omega)
\widetilde{\Omega}(\omega)\frac{s^{n-1}}{s^{n-1}}\,ds\,d\sigma(\omega) \nonumber\\
&\le \frac{1}{t}\sum_{j=0}^{\infty}\frac{1}{(2^{-j}t)^{n-1}}
\int_{\mathbb{S}^{n-1}}\int_{0}^{2^{-j+1}t}f(x-s\omega)
\widetilde{\Omega}(\omega)s^{n-1}\,ds\,d\sigma(\omega).
\end{align}
Using polar coordinates, we rewrite the inner integral over $\mathbb{R}^n$ as
\begin{equation*}
\int_{\mathbb{S}^{n-1}}\int_{0}^{2^{-j+1}t}f(x-s\omega)\widetilde{\Omega}(\omega)s^{n-1}
\,ds\,d\sigma(\omega) = \int_{B(x,2^{-j+1}t)}f(y)\left|
\widetilde{\Omega}\left(\frac{x-y}{|x-y|}\right)\right|dy.
\end{equation*}
Substituting this into \eqref{eq-mms}, we obtain
\begin{align*}
\mathcal{M}_{\Omega}^\ast(f)(x,t)
&\lesssim\sum_{j=0}^{\infty}2^{-j}\fint_{B(x,2^{-j+1}t)}f(y)
\left|\widetilde{\Omega}\left(\frac{x-y}{|x-y|}\right)\right|dy\\
&\le \sum_{j=0}^{\infty}2^{-j}\mathcal{M}^{\widetilde{\Omega}}(f)(x,2^{-j+1}t).
\end{align*}
Applying this, the Aoki--Rolewicz theorem,
$\widetilde{\Omega} \in L(\log L)(\mathbb{S}^{n-1})$ with
$|\widetilde{\Omega}|_{L(\log L)} \le |\Omega|_{L(\log L)}$, and Theorem \ref{thm-diag}(ii)
, we conclude that
there exists $\varrho\in(0,1)$ such that
\begin{align*}
\left\|\frac{\mathcal{M}_{\Omega}^\ast(f)(x,t)}{t^\gamma}\right\|_{L^{1,\infty}(\mathbb R^{n+1}_+,
\nu_\gamma)}^\varrho
&\le\sum_{j=0}^{\infty}2^{-j\varrho}
\left\|\frac{\mathcal{M}^{\widetilde{\Omega}}
(f)(x,2^{-j+1}t)}{t^\gamma}\right\|_{L^{1,\infty}(\mathbb R^{n+1}_+,
\nu_\gamma)}^\varrho\\
&\lesssim\sum_{j=0}^{\infty}2^{-j\varrho}\|f\|_{L^1(\mathbb R^n)}^\varrho\sim
\|f\|_{L^1(\mathbb R^n)}^\varrho.
\end{align*}
This finishes the proof of Theorem \ref{thm-stronger-2.5}.
\end{proof}

\subsection{$H^1$ Characterization in Terms of Truncated Rough Singular Integrals}
\label{s3.3}

In this subsection, we give an application of Theorem \ref{thm-diag}
to establish a weak-type representation of truncated rough
singular integrals  and a new characterization of Hardy spaces
in terms of them.

Let $n\in\mathbb N\cap[2,\infty)$ and $\Omega\in L^1(\mathbb S^{n-1})$.
For any $t\in(0,\infty)$ and $f\in \bigcup_{p\in[1,\infty)}L^p(\mathbb{R}^n)$,
the \emph{truncated rough singular integral $T_{\Omega,t}$} of $f$ is defined by setting,
for any $x\in\mathbb{R}^n$,
$$T_{\Omega,t}(f)(x)
:=\int_{|x-y|>t}K_\Omega(x-y)f(y)\,dy,$$
where $K_{\Omega}(x):=\frac{\Omega(x/|x|)}{|x|^n}$.
We assume that $\Omega$ satisfies the cancellation condition
\begin{equation}\label{5.1}
\int_{\mathbb S^{n-1}}\Omega(\theta)\,d\sigma(\theta)=0.
\end{equation}
Then the \emph{rough singular integral $T_\Omega$} is defined by setting,
for any $f\in C_{\rm c}^\infty(\mathbb R^n)$,
$$
T_{\Omega}(f)(\cdot):=\operatorname*{p.\!v.}
\int_{\mathbb R^n}K_\Omega(\cdot-y)f(y)\,dy.
$$
Here the \emph{symbol $C_{\rm c}^\infty(\mathbb R^n)$} denotes the set of
all infinitely differentiable functions on $\mathbb R^n$ with
compact support.
It is known that, if the even part of $\Omega$ belongs to
$L(\log L)(\mathbb S^{n-1})$, then $T$ extends to a bounded operator
on $L^p(\mathbb R^n)$ for $p\in(1,\infty)$; see \cite{cz56}. If
further assume that $\Omega\in L(\log L)(\mathbb S^{n-1})$,
then Seeger \cite{s96} proved that $T$ extends to an operator
of weak type $(1,1)$.

For simplicity, let $\mathcal L:=L(\log L)(\mathbb S^{n-1})$ and we denote by
$\|\cdot\|_{\mathcal L}$ its \emph{Luxemburg norm}; that is, with
$\Phi(s):=s(1+\log^+s)$ for $s\in(0,\infty)$,
$$\|h\|_{\mathcal L}:=\inf\left\{A\in(0,\infty):
\fint_{\mathbb S^{n-1}}\Phi\left(\frac{|h(\theta)|}{A}\right)\,d\sigma(\theta)
\le1\right\}.$$
For any $\delta\in (0,1)$, define the \emph{translation modulus}
\begin{align*}
\omega_{\mathcal L}(\Omega,\delta)
:=
\sup_{h\in\mathbb R^n,\, |h|\le\delta}
\|\Omega(\cdot+h)-\Omega(\cdot)\|_{\mathcal L},
\end{align*}
where $\Omega$ is understood as a homogeneous function of degree zero on
$\mathbb R^n\setminus\{{\bf0}\}$ with ${\bf 0}$ denoting
the origin of $\mathbb R^n$.
For any $\varrho\in(0,1)$, we say that $\Omega$ satisfies
the \emph{$\varrho$-$L(\log L)$-Dini condition} if
\begin{align*}
\mathfrak D_{\mathcal L,\varrho}(\Omega)
:=
\|\Omega\|_{\mathcal L}
+
\left[
\int_0^{1}\omega_{\mathcal L}(\Omega,\delta)^\varrho
\frac{d\delta}{\delta}
\right]^{1/\varrho}
<\infty.
\end{align*}
\begin{remark}
We note that the $L^1$-Dini condition is sometimes defined in the literature
via a rotational modulus of continuity, which is defined as
$\mathfrak{D}_{\mathcal{L},1}$ with
$\mathcal{L} = L^1(\mathbb S^{n-1})$ and $\omega_{\mathcal L}(\Omega,\delta)$ replaced by
$$\sup_{\rho \in {\rm SO}(n), \|\rho - I\| \le \delta} \|\Omega(\rho\cdot) -
\Omega(\cdot)\|_{L^1(\mathbb S^{n-1})},$$
where $\|\cdot\|$ denotes the matrix norm
and ${\rm SO}(n)$
denotes the special orthogonal group.
Since it is well known that
this rotational definition is equivalent to the translational
one in the $L^1$-case (see \cite[Theorem 2.5]{dl17-2}),
it is an interesting question whether this equivalence still holds for the
$\varrho$-$L(\log L)$-Dini condition.
We do not address this question here, as it does not
affect our present results.
\end{remark}
In what follows, we always fix
$\varrho\in(0,1)$ such that $\|\cdot\|_{L^{1,\infty}(\mathbb R^{n+1}_+,\nu_\gamma)}^\varrho$
satisfies the triangle inequality and simply say that $\Omega$
satisfies the $L(\log L)$-Dini condition, denoted
simply by $\mathfrak{D}_{\mathcal L}(\Omega)<\infty$.
The existence of such $\varrho$
is due to the Aoki--Rolewicz theorem.

For any $k\in\mathbb Z_+$,  the
\emph{angular oscillation of $\Omega$ at scale $2^{-k}$} is defined by
setting, for any $\xi\in\mathbb S^{n-1},$
\begin{align*}
\Omega_k^\sharp(\xi)
:=
2^{k(n-1)}
\int_{\{\eta\in\mathbb S^{n-1}:|\eta-\xi|<2^{-k}\}}
|\Omega(\xi)-\Omega(\eta)|\,d\sigma(\eta).
\end{align*}

\begin{lemma}
\label{lem-implies-angular-sum}
Assume that $\Omega$ satisfies the $L(\log L)$-Dini condition.
Then
\[
\left[
\sum_{k=0}^\infty
\left\|\Omega_k^\sharp\right\|_{\mathcal L}^{\varrho}
\right]^{\frac1\varrho}
\lesssim
\mathfrak D_{\mathcal L}(\Omega).
\]
\end{lemma}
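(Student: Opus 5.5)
Our plan is to compare the angular oscillation $\Omega_k^\sharp$ with the translation modulus $\omega_{\mathcal L}(\Omega,\delta)$ at the scale $\delta\sim 2^{-k}$. After that, the sum over $k$ is a dyadic discretization of the integral $\int_0^1\omega_{\mathcal L}(\Omega,\delta)^\varrho\,\frac{d\delta}{\delta}$.

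\emph{Step 1: pointwise bound.} Fix $k\in\mathbb Z_+$ and $\xi\in\mathbb S^{n-1}$. We parametrize the cap $\{\eta\in\mathbb S^{n-1}:|\eta-\xi|<2^{-k}\}$ by translations: write $\eta=(\xi+h)/|\xi+h|$ with $h$ ranging over a ball $B(\mathbf 0,c2^{-k})$. By the degree-zero homogeneity, $\Omega(\eta)=\Omega(\xi+h)$. The Jacobian of $h\mapsto(\xi+h)/|\xi+h|$, restricted to the tangent directions, is comparable to $1$ uniformly in $\xi$. To avoid the degenerate radial direction, we average over the full $n$-dimensional ball $|h|<c2^{-k}$ instead of over tangent vectors only; this uses the fact that every $x$ with $|x|\sim1$ and $x/|x|$ in the cap arises in this way. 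We expect the estimate
\[
\Omega_k^\sharp(\xi)\lesssim 2^{kn}\int_{|h|<C2^{-k}}|\Omega(\xi+h)-\Omega(\xi)|\,dh .
\]
This is an average over $h$ of the functions $\xi\mapsto|\Omega(\xi+h)-\Omega(\xi)|$.

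\emph{Step 2: Luxemburg norm.} Since $\|\cdot\|_{\mathcal L}$ is a norm, Minkowski's integral inequality for Banach function norms (equivalently, Jensen's inequality with the convex function $\Phi$) gives
\[
\|\Omega_k^\sharp\|_{\mathcal L}\lesssim \sup_{|h|\le C2^{-k}}\|\Omega(\cdot+h)-\Omega(\cdot)\|_{\mathcal L}=\omega_{\mathcal L}(\Omega,C2^{-k}).
\]
For the finitely many $k$ with $C2^{-k}\ge1$, we instead use the trivial bound $\|\Omega_k^\sharp\|_{\mathcal L}\lesssim\|\Omega\|_{\mathcal L}$. This follows from $\Omega_k^\sharp\le |\Omega(\xi)|\,2^{k(n-1)}\sigma(\text{cap})+2^{k(n-1)}\int_{\text{cap}}|\Omega|$, where the second term is a cap-average operator, bounded on $\mathcal L$ by interpolation between $L^1$ and $L^\infty$.

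\emph{Step 3: summation.} Since $\omega_{\mathcal L}(\Omega,\cdot)$ is nondecreasing in $\delta$, we have
\[
\omega_{\mathcal L}(\Omega,C2^{-k})^\varrho\lesssim\int_{C2^{-k}}^{C2^{-k+1}}\omega_{\mathcal L}(\Omega,\delta)^\varrho\,\frac{d\delta}{\delta}.
\]
Summing over $k$ bounds the sum by the Dini integral plus finitely many terms, each at most $\|\Omega\|_{\mathcal L}^\varrho$. Taking $\varrho$-th roots then gives the claimed bound by $\mathfrak D_{\mathcal L}(\Omega)$.

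The main obstacle is Step 1: carrying out the change of variables between the spherical cap and the translated points rigorously and uniformly in $\xi$, while keeping the Luxemburg norm (which is not an $L^1$ norm) under control in Step 2 via a Minkowski-type inequality. We would first try to write $\Omega_k^\sharp$ exactly as an average over translations using polar coordinates in $h$.
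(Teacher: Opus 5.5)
Your proposal is correct and follows essentially the same route as the paper. The paper proves the pointwise comparison $\delta^{-(n-1)}\int_{|\eta-\xi|<\delta}H\,d\sigma\lesssim\fint_{|h|<2\delta}H(\frac{\xi+h}{|\xi+h|})\,dh$ by polar coordinates over the thin shell $1<r<1+\delta$, applies Minkowski's integral inequality in $\mathcal L$, bounds the coarsest scale trivially by $\|\Omega\|_{\mathcal L}$, and sums dyadically using the monotonicity of $\omega_{\mathcal L}(\Omega,\cdot)$. Two small remarks: your phrase ``$|x|\sim1$'' should really be $1<|x|<1+2^{-k}$, since only a thin shell gives $|h|\lesssim 2^{-k}$; and your explicit separation of the finitely many scales with $C2^{-k}$ outside $(0,1)$ is, if anything, slightly more careful than the paper's handling of the $k=1$ term.
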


\begin{proof}
Note that, for any $\delta\in(0,1]$, any
$\xi\in\mathbb S^{n-1}$, and any nonnegative
measurable functions $H$ on $\mathbb S^{n-1}$,
\begin{equation}\label{eq5.6}
\delta^{-(n-1)}
\int_{\{\eta\in\mathbb S^{n-1}:|\eta-\xi|<\delta\}}
H(\eta)\,d\sigma(\eta)
\lesssim
\fint_{\{h\in\mathbb R^n:|h|<2\delta\}}
H\left(\frac{\xi+h}{|\xi+h|}\right)\,dh,
\end{equation}
where the implicit positive constant depends only on $n$.
Indeed, for any $\xi\in\mathbb{S}^{n-1}$, using the change of variables $z=\xi+h$
and then using polar coordinates $z=r\eta$,
we find that
\[
\begin{aligned}
\int_{|h|<2\delta}
H\left(\frac{\xi+h}{|\xi+h|}\right)\,dh
&=
\int_{|z-\xi|<2\delta}H(z')\,dz\\
&=
\int_{\mathbb S^{n-1}}H(\eta)
\int_0^\infty
\mathbf 1_{\{|r\eta-\xi|<2\delta\}}
r^{n-1}\,dr\,d\sigma(\eta).
\end{aligned}
\]
If $|\eta-\xi|<\delta$ and $1<r<1+\delta$, then
\[
|r\eta-\xi|
\le |r\eta-\eta|+|\eta-\xi|
\le 2\delta.
\]
Thus, for any $\xi\in\mathbb{S}^{n-1}$,
\[
\int_{|h|<2\delta}
H\left(\frac{\xi+h}{|\xi+h|}\right)\,dh
\ge
\delta
\int_{|\eta-\xi|<\delta}H(\eta)\,d\sigma(\eta).
\]
Then dividing by $\delta^n$
in both sides proves \eqref{eq5.6}.

Now, applying \eqref{eq5.6} with $\delta=2^{-k}$ for any $k\in\mathbb{Z}_+$ and
\(
H(\eta):=|\Omega(\xi)-\Omega(\eta)|,
\)
we conclude that, for any $\xi\in\mathbb S^{n-1}$,
\begin{align*}
\Omega_k^\sharp(\xi)
\lesssim
\fint_{|h|<2^{-k+1}}
|\Omega(\xi)-\Omega(\xi+h)|\,dh.
\end{align*}
Here $\Omega$ is understood as a homogeneous function of degree zero on
$\mathbb R^n\setminus\{{\bf 0}\}$.
Taking the Orlicz norm $\mathcal{L}$ and using Minkowski's integral inequality
in $\mathcal L$ (see, for instance, \cite[Proposition 2.1]{s95}), we obtain, for any
$k\in\mathbb{N}$,
\begin{align*}
\left\|\Omega_k^\sharp\right\|_{\mathcal L}
\lesssim
\fint_{|h|<2^{-k+1}}
\|\Omega(\cdot)-\Omega(\cdot+h)\|_{\mathcal L}\,dh\le
\omega_{\mathcal L}(\Omega,2^{-k+1}).
\end{align*}
For $k=0$, we simply use
\[
\Omega_0^\sharp(\xi)
\le
|\Omega(\xi)|
+
\int_{\mathbb S^{n-1}}|\Omega(\eta)|\,d\sigma(\eta),
\]
which further implies that
\(
\|\Omega_0^\sharp\|_{\mathcal L}
\lesssim
\|\Omega\|_{\mathcal L}.
\)
Since $\omega_{\mathcal L}(\Omega,\delta)$ is nondecreasing in $\delta$,
it follows that
\begin{align*}
\sum_{k=0}^\infty
\left\|\Omega_k^\sharp\right\|_{\mathcal L}^{\varrho}
&\lesssim
\|\Omega\|_{\mathcal L}^{\varrho}
+
\sum_{k\in\mathbb{N}}
\omega_{\mathcal L}(\Omega,2^{-k+1})^\varrho             \\
&\lesssim
\|\Omega\|_{\mathcal L}^{\varrho}
+
\int_0^{1}
\omega_{\mathcal L}(\Omega,\delta)^\varrho
\frac{d\delta}{\delta}.
\end{align*}
Taking the power $\frac1\varrho$
in both sides yields the desired estimate.
This finishes the proof of Lemma \ref{lem-implies-angular-sum}.
\end{proof}

We now prove the following Cotlar-type inequality
with respect to truncated rough singular integrals.

\begin{proposition}\label{prop-rough-Cotlar}
Let $\Omega\in L(\log L)(\mathbb{S}^{n-1})$ satisfy \eqref{5.1}.
Then, for any $f\in \bigcup_{p\in[1,\infty)}L^p(\mathbb{R}^{n})$,
any $t\in(0,\infty)$, and almost every $x\in\mathbb R^n$,
\begin{align*}
|T_{\Omega,t}(f)(x)|
&\lesssim
\mathcal{M}_{1}(T_{\Omega}(f))(x,t)
+
\mathcal{M}_1(f)(x,t)\\
&\quad+
\sum_{k=0}^\infty 2^{-k}\mathcal{M}_{2^{k+1}}^{\Omega}(f)(x,t)
+
\sum_{k=0}^\infty \mathcal{M}_{2^{k+1}}^{\Omega_k^\sharp}(f)(x,t),
\end{align*}
where $\mathcal{M}_1$ is as in \eqref{eq-lifted-hl-def}
and the implicit positive constant is independent of $f,t$, and $x$.
\end{proposition}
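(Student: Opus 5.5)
The plan is the classical Cotlar argument: compare $T_{\Omega,t}(f)(x)$ with averages of $T_\Omega(f)$ near $x$, and measure the kernel error with the rough maximal operators at dyadically growing scales. Fix $t$ and $x$. Split $f=f_1+f_2$ with $f_1:=f\mathbf 1_{B(x,t)}$ and $f_2:=f\mathbf 1_{B(x,t)^\complement}$, so that $T_{\Omega,t}(f)(x)=T_\Omega(f_2)(x)$. For every $z\in B(x,t/2)$ we then write
\[
T_{\Omega,t}(f)(x)=T_\Omega(f)(z)-T_\Omega(f_1)(z)+\bigl[T_\Omega(f_2)(x)-T_\Omega(f_2)(z)\bigr].
\]
Next we average over $z\in B(x,t/2)$, raising $|\cdot|$ to a small power $\delta\in(0,1)$ before doing so, as in the standard proof.

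\emph{The term $T_\Omega(f)(z)$.} Its average over $B(x,t/2)$ is controlled by $\mathcal M_1(T_\Omega(f))(x,t)$, up to the harmless factor $2^n$ coming from the radius mismatch (or after enlarging the ball to radius $t$).

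\emph{The term $T_\Omega(f_1)(z)$.} We use Seeger's weak type $(1,1)$ bound for $T_\Omega$ with $\Omega\in L(\log L)(\mathbb S^{n-1})$ together with Kolmogorov's inequality:
\[
\Bigl(\fint_{B(x,t/2)}|T_\Omega(f_1)|^\delta\Bigr)^{1/\delta}\lesssim |B(x,t)|^{-1}\|f_1\|_{L^1(\mathbb R^n)}\lesssim \mathcal M_1(f)(x,t).
\]

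\emph{The error term.} This is the main obstacle:
\[
\bigl|T_\Omega(f_2)(x)-T_\Omega(f_2)(z)\bigr|\le\int_{|x-y|\ge t}|K_\Omega(x-y)-K_\Omega(z-y)|\,|f(y)|\,dy .
\]
Decompose the region into annuli $2^kt\le|x-y|<2^{k+1}t$, $k\in\mathbb Z_+$. On the $k$-th annulus $|z-x|<t/2\le 2^{-k-1}|x-y|$, and we split
\[
K_\Omega(x-y)-K_\Omega(z-y)=\Omega\Bigl(\tfrac{x-y}{|x-y|}\Bigr)\bigl[|x-y|^{-n}-|z-y|^{-n}\bigr]+\frac{\Omega(\frac{x-y}{|x-y|})-\Omega(\frac{z-y}{|z-y|})}{|z-y|^n}.
\]
\emph{Radial part.} The bracket is $\lesssim 2^{-k}(2^kt)^{-n}$, so after summing over annuli this part is bounded by
\[
\sum_k 2^{-k}\fint_{B(x,2^{k+1}t)}|f|\,|\Omega|\lesssim\sum_k2^{-k}\mathcal M^{\Omega}_{2^{k+1}}(f)(x,t).
\]
\emph{Angular part.} The two directions differ by at most $\lesssim 2^{-k}$. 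We avoid pointwise control in $z$ and instead average in $z$ first: by Tonelli, for fixed $y$ the map $z\mapsto\frac{z-y}{|z-y|}$ from $B(x,t/2)$ pushes normalized Lebesgue measure to a measure on the cap of radius $\sim 2^{-k}$ around $\xi:=\frac{x-y}{|x-y|}$ with density $\lesssim 2^{k(n-1)}$; this is a polar-coordinates computation in the spirit of \eqref{eq5.6}. Hence
\[
\fint_{B(x,t/2)}\Bigl|\Omega(\xi)-\Omega\Bigl(\tfrac{z-y}{|z-y|}\Bigr)\Bigr|\,dz\lesssim\Omega^\sharp_{k-c}(\xi)
\]
for a fixed integer shift $c$, absorbed by reindexing or by the doubling comparison $\Omega_k^\sharp\lesssim\Omega_{k+1}^\sharp$-type bounds via covering of caps. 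Integrating against $|f(y)|(2^kt)^{-n}$ over the annulus gives $\mathcal M^{\Omega_k^\sharp}_{2^{k+1}}(f)(x,t)$.

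To combine the averaged error with the $L^\delta$-averages of the other terms, use Hölder, since the $L^1$ average dominates the $L^\delta$ average. Taking the $\delta$-average of the identity over $z\in B(x,t/2)$ and using the quasi-triangle inequality for $|\cdot|^\delta$ then yields the stated pointwise bound for almost every $x$. Throughout we use that $T_\Omega(f)$ is defined for $f\in L^p(\mathbb R^n)$, $p\ge1$, via the bounded or weak-type extension, and we justify the identity above by density from $C_{\rm c}^\infty(\mathbb R^n)$.

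The delicate step is the angular oscillation estimate: handling the change of base point from $x$ to $z$, which moves the direction by an amount comparable to the cap size, while still producing exactly $\Omega_k^\sharp$ evaluated at $\xi$ at the matching scale.
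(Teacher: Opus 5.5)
Your proposal is correct and follows essentially the same route as the paper's proof. Both arguments split off $f\mathbf 1_{B(x,t)}$, take $L^\delta$-averages over $z\in B(x,t/2)$, and control the local piece by Seeger's weak type $(1,1)$ bound with Kolmogorov's inequality. Both then split the kernel difference on dyadic annuli into a radial part, giving $\sum_k 2^{-k}\mathcal M^{\Omega}_{2^{k+1}}(f)(x,t)$, and an angular part averaged in $z$ by polar coordinates, giving $\Omega_k^\sharp$.

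One correction concerns the step you flag as delicate: no shift $c$ is needed. On the $k$-th annulus, $|z-x|<2^{-k-1}|x-y|$ gives $\bigl|\frac{z-y}{|z-y|}-\frac{x-y}{|x-y|}\bigr|\le\frac{2|z-x|}{|x-y|}<2^{-k}$, so the cap has radius exactly $2^{-k}$ and you obtain $\Omega_k^\sharp(\xi)$ directly, as the paper does. This matters because neither of your fallbacks would preserve the stated pointwise form:
\begin{itemize}
\item reindexing produces $\mathcal M^{\Omega_j^\sharp}_{2^{j+c+1}}$, which is at the wrong scale;
\item $\Omega_k^\sharp\lesssim\Omega_{k+1}^\sharp$ is false pointwise, for example when $\Omega$ is constant on the cap of radius $2^{-k-1}$ about $\xi$ but not on the cap of radius $2^{-k}$.
\end{itemize}
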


\begin{proof}
Fix $x\in\mathbb R^n$, $t\in(0,\infty)$, and $f\in \bigcup_{p\in[1,\infty)}L^p(\mathbb{R}^{n})$.
Let
$B:=B(x,\frac{t}{2})$, $f_0:=f\mathbf 1_{B(x,t)},$ and
$f_\infty:=f-f_0.$
For almost every $z\in B$, since $\operatorname{supp}f_\infty\subset
B(x,t)^\complement$, it follows that
\begin{align*}
T_{\Omega}(f_\infty)(z)&=\int_{\mathbb R^n}K_\Omega(z-y)f_{\infty}(y)\,dy
=\int_{|x-y|>t}K_\Omega(z-y)f(y)\,dy   \\
&=T_{\Omega,t}(f)(x) - \int_{|x-y|>t}
[K_\Omega(x-y)-K_\Omega(z-y)]f(y)\,dy .
\end{align*}
Let $q\in(0,1)$. Taking the $L^q$ average over $B$ with respect to $z$
gives
\begin{align}\label{eq5.10}
|T_{\Omega,t}(f)(x)|&
\lesssim
\left[\fint_B |T_{\Omega}(f)(z)|^q\,dz\right]^{\frac{1}{q}}
+
\left[\fint_B |T_{\Omega}(f_0)(z)|^q\,dz\right]^{\frac{1}{q}}  \nonumber\\
&\quad+
\fint_B
\int_{|x-y|>t}
|K_\Omega(x-y)-K_\Omega(z-y)|\,|f(y)|\,dy\,dz .
\end{align}
Since $q<1$, we deduce that
\begin{align}\label{eq5.9}
\left[\fint_B |T_{\Omega}(f)(z)|^q\,dz\right]^{\frac{1}{q}}
\lesssim
\fint_{B(x,t)} |T_{\Omega}(f)(z)|\,dz
\le
\mathcal{M}_{1}(T_{\Omega}(f))(x,t).
\end{align}
Moreover, by the weak type $(1,1)$ boundedness of $T_\Omega$
(see \cite[Proposition]{s96}) and Kolmogorov's
inequality, we find that
\begin{align}\label{eq5.8}
\left[\fint_B |T_{\Omega}(f_0)(z)|^q\,dz\right]^{\frac{1}{q}}
\lesssim
|B|^{-1}\|f_0\|_{L^1(\mathbb R^n)}
\lesssim
\mathcal{M}_1(f)(x,t).
\end{align}

It remains to estimate the last term of \eqref{eq5.10}. Write
\(u:=x-y\) and \(h:=z-x.\)
Then $|h|<\frac{t}{2}$ and $|u|>t$. Given $k\in\mathbb{Z}_+$, define
\( A_k:=\{u\in\mathbb R^n:2^kt<|u|\le2^{k+1}t\}.
\)
When $u\in A_k$ and $|h|<\frac{t}{2}$,
we have $|u+h|\sim |u|\sim2^kt$ and hence
\begin{align}\label{eq-kk}
|K_\Omega(u)-K_\Omega(u+h)|
\le        |\Omega(u')|
\left|\frac1{|u|^n}-\frac1{|u+h|^n}\right|+
\frac{|\Omega(u')-\Omega((u+h)')|}{|u+h|^n},
\end{align}
where $u':=\frac{u}{|u|}$. The radial part satisfies
\begin{align}\label{eq5.14}
\left|\frac1{|u|^n}-\frac1{|u+h|^n}\right|
\lesssim
\frac{|h|}{|u|^{n+1}}
\lesssim
2^{-k}(2^kt)^{-n}.
\end{align}
For the angular part, applying polar coordinates and an argument
similar to that used in the proof of \eqref{eq-hu}, we conclude that, for any $u\in A_k$,
\begin{align*}
\fint_{|h|<\frac{t}{2}}
|\Omega(u')-\Omega((u+h)')|\,dh
\lesssim
\Omega_k^\sharp(u').
\end{align*}
Combining this, \eqref{eq-kk}, \eqref{eq5.14},  and
Tonelli's theorem, we obtain
\begin{align*}
&\fint_B
\int_{|x-y|>t}
|K_\Omega(x-y)-K_\Omega(z-y)|\,|f(y)|\,dy\,dz \\
&\quad\lesssim
\sum_{k=0}^\infty
2^{-k}
\frac1{(2^kt)^n}
\int_{B(x,2^{k+1}t)}
|\Omega((x-y)')|\,|f(y)|\,dy  \\
&\qquad+
\sum_{k=0}^\infty
\frac1{(2^kt)^n}
\int_{B(x,2^{k+1}t)}
\Omega_k^\sharp((x-y)')\,|f(y)|\,dy \\
&\quad\lesssim
\sum_{k=0}^\infty 2^{-k}\mathcal{M}_{2^{k+1}}^{\Omega}(f)(x,t)
+
\sum_{k=0}^\infty \mathcal{M}_{2^{k+1}}^{\Omega_k^\sharp}(f)(x,t).
\end{align*}
Combining this, \eqref{eq5.8}, \eqref{eq5.9},
and \eqref{eq5.10}, we obtain the desired conclusion,
which completes the proof of Proposition \ref{prop-rough-Cotlar}.
\end{proof}

We now state the weak-type representation of truncated rough singular integrals.
\begin{theorem}
\label{thm-truncated-rough-representation}
Let $\gamma\in(-\infty,-n)\cup(0,\infty)$, and let $\Omega$ satisfy
\eqref{5.1} and the $L(\log L)$-Dini condition.
Then there exists a positive constant $C_1$, depending only on $n$
and $\gamma$, such that, for any $f\in L^1(\mathbb R^n)$ with $T_{\Omega}(f)\in L^1(\mathbb R^n)$,
\begin{align}\label{eq5.17}
\sup_{\lambda\in(0,\infty)}
\lambda
\underset{|T_{\Omega,t}(f)(x,t)|>\lambda t^\gamma}
{\int_{\mathbb R^n}\int_0^\infty}
t^{\gamma-1}\,dt\,dx
\le C_1\left\{
\|T_{\Omega}(f)\|_{L^1(\mathbb R^n)}
+
\left[\mathfrak D_{\mathcal L}(\Omega)+1\right]
\|f\|_{L^1(\mathbb R^n)}\right\}.
\end{align}

In addition, if $T_\Omega$ satisfies that, for any $f\in C_{\rm c}^\infty(\mathbb{R}^n)$,
\begin{equation}\label{eq-pw}
        \lim_{t\to0^+}T_{\Omega,t}(f)(x)=T_{\Omega}(f)(x)
        \quad\text{for a.e. }x\in\mathbb R^n.
\end{equation}
Then there exists a positive constant $C_2$ such that, for any $f\in L^1(\mathbb{R}^n)$,
\begin{align}\label{eq-lower-T}
C_2\|T_{\Omega}(f)\|_{L^1(\mathbb R^n)}\le
\sup_{\lambda\in(0,\infty)}
\lambda
\underset{|T_{\Omega,t}(f)(x,t)|>\lambda t^\gamma}
{\int_{\mathbb R^n}\int_0^\infty}
t^{\gamma-1}\,dt\,dx.
\end{align}
\end{theorem}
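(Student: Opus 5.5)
The upper estimate \eqref{eq5.17} will come from integrating the Cotlar-type inequality of Proposition \ref{prop-rough-Cotlar} against $\nu_\gamma$ in weak $L^1$. Divide that pointwise bound by $t^\gamma$ and take the quasi-norm $\|\cdot\|_{L^{1,\infty}(\mathbb R^{n+1}_+,\nu_\gamma)}^\varrho$, which is subadditive by the Aoki--Rolewicz theorem. This turns the four terms, and both infinite sums, into a sum of $\varrho$-th powers. Four estimates then finish the upper bound.

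\begin{enumerate}[{\rm(i)}]
\item $\mathcal M_1(T_\Omega f)(x,t)/t^\gamma$ is controlled by \eqref{eq-lifted-hl-weak} with $p=1$ applied to $T_\Omega f\in L^1$. This gives $\|T_\Omega f\|_{L^1}$.
\item $\mathcal M_1(f)/t^\gamma$ gives $\|f\|_{L^1}$ by the same estimate.
\item For $\mathcal M^\Omega_{2^{k+1}}(f)(x,t)=\mathcal M^\Omega(f)(x,2^{k+1}t)$, rescale $t\mapsto 2^{-k-1}t$. The measure $\nu_\gamma$ and the factor $t^{-\gamma}$ rescale by $2^{\pm(k+1)\gamma}$, and these cancel exactly. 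So the weak-norm is independent of $\theta$, as already noted before Proposition \ref{thm-lower}. Theorem \ref{thm-up-p=1} then bounds each term by $(1+|\Omega|_{L(\log L)})\|f\|_{L^1}$. The weights $2^{-k\varrho}$ are summable, and $|\Omega|_{L(\log L)}\lesssim \|\Omega\|_{\mathcal L}+\|\Omega\|_{\mathcal L}\log$-type quantities. To get linear dependence on $\|\Omega\|_{\mathcal L}$ I would use the linear-scaling form of Theorem \ref{thm-up-p=1}, described next.
\item For $\mathcal M^{\Omega_k^\sharp}_{2^{k+1}}$, the same scale invariance and Theorem \ref{thm-up-p=1} give a bound $C(1+|\Omega_k^\sharp|_{L(\log L)})\|f\|_{L^1}$. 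Summing the $\varrho$-th powers and invoking Lemma \ref{lem-implies-angular-sum} yields the factor $\mathfrak D_{\mathcal L}(\Omega)$. This requires a bound that is linear in the Luxemburg norm, with no additive $1$ per $k$; otherwise $\sum_k 1^\varrho$ diverges.
\end{enumerate}

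The linear-scaling form is the main technical point. Since $\mathcal M^{c\Omega}=c\,\mathcal M^\Omega$, apply Theorem \ref{thm-up-p=1} to $\Omega/\|\Omega\|_{\mathcal L}$, whose $L(\log L)$ modular is $\lesssim1$. This gives the bound $C\|\Omega\|_{\mathcal L}\|f\|_{L^1}$, with $C$ depending only on $n,\gamma$. Applied to $\Omega$ in (iii) and to each $\Omega_k^\sharp$ in (iv), it makes both estimates close.

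For the lower bound \eqref{eq-lower-T}, I would mimic Proposition \ref{thm-lower}. First fix $f\in L^1$ with $T_\Omega f\in L^1$; if $T_\Omega f\notin L^1$, I would expect the left side to be infinite by a truncation argument. The plan is to show $T_{\Omega,t}(f)\to T_\Omega(f)$ as $t\to0^+$ in a sense strong enough to feed \cite[Lemma 3.3]{dlyyz26}, for example in measure on sets of finite measure or a.e. with a dominating bound. Then the liminf as $\lambda\to L_\gamma$ is at least $|\gamma|^{-1}\|T_\Omega f\|_{L^1}$. To get convergence from the hypothesis \eqref{eq-pw}, which is only stated for $C_{\rm c}^\infty$ functions, I would approximate $f$ by $g\in C_{\rm c}^\infty$ in $L^1$. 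The maximal truncated operator is of weak type $(1,1)$ under the Dini-type hypothesis by Seeger \cite{s96} and a Cotlar argument as above. The standard density argument then extends \eqref{eq-pw} a.e. to all $f\in L^1$.

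The hard step is confirming that \cite[Lemma 3.3]{dlyyz26} needs only a.e. convergence $F_t\to F$ with $F\in L^1$. If it instead needs $L^1$ convergence, I would substitute Fatou's lemma on the level sets restricted to small $t$, where $|T_{\Omega,t}f(x)|>\tfrac12|T_\Omega f(x)|$ eventually. This directly gives $\lambda\,\nu_\gamma(\cdot)\gtrsim \int\!\int_{\{|T_\Omega f(x)|>2\lambda t^\gamma,\,t<\tau(x)\}}t^{\gamma-1}\,dt\,dx$. Letting $\lambda\to L_\gamma$, the constraint $t<\tau(x)$ becomes irrelevant, and the integral evaluates to $\gtrsim\|T_\Omega f\|_{L^1}$.
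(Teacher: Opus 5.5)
Your argument is correct. The upper bound is exactly the paper's proof: Proposition \ref{prop-rough-Cotlar}, the $\varrho$-triangle inequality in $L^{1,\infty}(\mathbb R^{n+1}_+,\nu_\gamma)$, \eqref{eq-lifted-hl-weak} for the two Hardy--Littlewood terms, and Theorem \ref{thm-up-p=1} applied to Luxemburg-normalized kernels together with Lemma \ref{lem-implies-angular-sum}. You also use the normalized form for the term with $\Omega$ itself. If anything, that is slightly cleaner than the paper's appeal to Theorem \ref{thm-diag}(ii), because it shows that term is $\lesssim\|\Omega\|_{\mathcal L}\|f\|_{L^1(\mathbb R^n)}$.

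The lower bound follows the paper's skeleton: extend \eqref{eq-pw} to all of $L^1$, then turn a.e.\ convergence into the lower bound. The paper cites \cite[Theorem 1.1]{l25} for the extension and \cite[Lemma 3.3]{dlyyz26} for the conversion; you prove both inputs yourself. For the extension, take the supremum over $t$ in the unlifted Cotlar estimate. This bounds $\sup_t|T_{\Omega,t}f|$ by $[M(|T_\Omega f|^q)]^{1/q}+Mf+M^\Omega f+\sum_k M^{\Omega_k^\sharp}f$. Each term is of weak type $(1,1)$: by Seeger \cite{s96}, the Hardy--Littlewood theorem, Theorem \ref{thm-classical-rough-maximal} applied to Luxemburg-normalized kernels, and Lemma \ref{lem-implies-angular-sum} with the $\varrho$-triangle inequality in $L^{1,\infty}(\mathbb R^n)$. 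The density argument then goes through. This gives a self-contained proof that uses only the Dini hypothesis already assumed.

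One point needs care. You must stop at the $L^q$-average with $q\in(0,1)$ in \eqref{eq5.10}, since $[M(|\cdot|^q)]^{1/q}$ is bounded on $L^{1,\infty}$. The form $\mathcal M_1(T_\Omega f)(x,t)$ stated in Proposition \ref{prop-rough-Cotlar} would instead produce $M(T_\Omega(f-g))$, which is not controlled by $\|f-g\|_{L^1}$.

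Your monotone-convergence substitute for \cite[Lemma 3.3]{dlyyz26} is correct once the missing factor $\lambda$ is restored on the right-hand side. Then $\lambda$ times the inner $t$-integral equals $\gamma^{-1}\min\{\lambda\tau(x)^\gamma,|T_\Omega f(x)|/2\}$ when $\gamma>0$, and $|\gamma|^{-1}(|T_\Omega f(x)|/2-\lambda\tau(x)^\gamma)_+$ when $\gamma<0$. Both increase to $|T_\Omega f(x)|/(2|\gamma|)$ as $\lambda\to L_\gamma$. This also handles the case $T_\Omega f\notin L^1$ directly, so no separate truncation argument is needed.
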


\begin{proof}
We first prove the upper estimate \eqref{eq5.17}. By Proposition~\ref{prop-rough-Cotlar}
and the Aoki--Rolewicz theorem, we find that
\begin{align*}
\left\|\frac{T_{\Omega,t}(f)(x)}{t^\gamma} \right\|_{L^{1,\infty}(\mathbb{R}^{n+1}_+,
\nu_\gamma)}^\varrho
&\le\left\|\frac{\mathcal{M}_{1}(T_{\Omega}(f))(x,t)}{t^\gamma}
\right\|_{L^{1,\infty}(\mathbb{R}^{n+1}_+,\nu_\gamma)}^\varrho
+\left\|\frac{\mathcal{M}_1(f)(x,t)}{t^\gamma}
\right\|_{L^{1,\infty}(\mathbb{R}^{n+1}_+,\nu_\gamma)}^\varrho\\
&\quad+\sum_{k=0}^\infty 2^{-k\varrho}\left\|\frac{\mathcal{M}_{2^{k+1}}^{\Omega}(f)(x,t)}{t^\gamma}
\right\|_{L^{1,\infty}(\mathbb{R}^{n+1}_+,\nu_\gamma)}^\varrho\\
&\quad+\sum_{k=0}^\infty
\left\|\frac{\mathcal{M}_{2^{k+1}}^{\Omega_k^\sharp}(f)(x,t)}{t^\gamma}
\right\|_{L^{1,\infty}(\mathbb{R}^{n+1}_+,\nu_\gamma)}^\varrho\\
&=:{\rm I}+{\rm II}+{\rm III}+{\rm IV},
\end{align*}
where $\nu_\gamma$ is as in \eqref{dv}.
From \eqref{eq-lifted-hl-weak}, it follows that
${\rm I}\lesssim \left\|T_{\Omega}f \right\|_{L^1(\mathbb{R}^n)}^\varrho$ and
${\rm II}\lesssim \left\|f \right\|_{L^1(\mathbb{R}^n)}^\varrho.$
Applying Theorem \ref{thm-diag}(ii), we conclude that
\[
{\rm III}\lesssim
\sum_{k=0}^\infty 2^{-k\varrho}\|f\|_{L^1(\mathbb{R}^n)}^\varrho\sim
\|f\|_{L^1(\mathbb{R}^n)}^{\varrho}.
\]
It remains to estimate ${\rm IV}$. Indeed, applying
Theorem \ref{thm-up-p=1} with $\Omega$ therein replaced by $\frac{\Omega}{\|\Omega\|_{\mathcal{L}}}$
and using $|\frac{\Omega}{\|\Omega\|_{\mathcal{L}}}|_{L(\log L)(\mathbb S^{n-1})}\le 1$, we obtain
\begin{equation*}
\left\|\frac{\mathcal{M}_{\theta}^\Omega(f)(x,t)}{t^\gamma}
\right\|_{L^{1,\infty}(\mathbb{R}^{n+1}_+,\nu_\gamma)}\lesssim
\|\Omega\|_{\mathcal L}\|f\|_{L^1(\mathbb R^n)},
\end{equation*}
where the implicit positive constant depends only on $n$ and $\gamma$.
By this
and Lemma \ref{lem-implies-angular-sum}, we find that
\begin{align*}
{\rm IV}\lesssim \sum_{k=0}^\infty
\left\|\Omega_k^\sharp \right\|_{\mathcal{L}}^{\varrho}\|f\|_{L^1(\mathbb{R}^n)}^{\varrho}
\lesssim\mathfrak D_{\mathcal L}(\Omega)^\varrho
\|f\|_{L^1(\mathbb R^n)}^\varrho.
\end{align*}
Consequently,
\[
\left\|\frac{T_{\Omega,t}(f)(x)}{t^\gamma} \right\|_{L^{1,\infty}(\mathbb{R}^{n+1}_+,
\nu_\gamma)}
\lesssim\left[\mathfrak D_{\mathcal L}(\Omega)+1\right]\|f\|_{L^1(\mathbb R^n)}+
\left\|T_{\Omega}f \right\|_{L^1(\mathbb{R}^n)}.
\]
This proves the upper estimate \eqref{eq5.17}.

It remains to show the lower estimate \eqref{eq-lower-T}.
By the assumption \eqref{eq-pw} and \cite[Theorem 1.1]{l25},
we easily find that \eqref{eq-pw} holds for all $f\in L^1(\mathbb{R}^n)$.
From this and \cite[Lemma 3.3]{dlyyz26}, we deduce the lower estimate \eqref{eq-lower-T}.
This finishes the proof of Theorem \ref{thm-truncated-rough-representation}.
\end{proof}

In what follows, fix $\Phi\in\mathcal{S}(\mathbb{R}^n)$
satisfying $\int_{\mathbb{R}^n}\Phi(x)\,dx\ne0$.
The \emph{Hardy space $H^1(\mathbb{R}^n)$} consists of all $f\in L^1(\mathbb{R}^n)$
such that
\begin{equation*}
\|f\|_{H^1(\mathbb{R}^n)}:=\left\|\sup_{t\in(0,\infty)}
\left|\Phi_t\ast f\right|\right\|_{L^1(\mathbb{R}^n)}<\infty,
\end{equation*}
where $\Phi_t(\cdot):=\frac{1}{t^{n}}\Phi(\frac{\cdot}{t})$ for any
$t\in(0,\infty)$.

A classical result establishes that $H^1(\mathbb{R}^n)$
admits an equivalent characterization through singular integrals,
specifically the Riesz transforms $\{R_j\}_{j=1}^n$
(see, for example, \cite{fs72,u82}).
Indeed, $f\in H^1(\mathbb{R}^n)$ if and only if $f\in L^1(\mathbb{R}^n)$
and $R_j(f)\in L^1(\mathbb{R}^n)$ for all $j\in\{1,\ldots,n\}$.
Moreover, we have the norm equivalence
\begin{equation}\label{5-9-0}
\|f\|_{H^1(\mathbb{R}^n)}
\sim \|f\|_{L^1(\mathbb{R}^n)} +\sum_{j=1}^n \|R_j(f)\|_{L^1(\mathbb{R}^n)},
\end{equation}
where the positive equivalence constants are independent
of $f$.
Recall that the \emph{$j$-th Riesz transform} is defined pointwise
for almost every $x\in\mathbb{R}^n$ by the limit
$R_j(f)(x):=\lim_{t\to0^+} R_j^t(f)(x)$,
where $R_j^t$ denotes the truncated Riesz transform,
given by
\begin{equation*}
R_{j}^t(f)(x):=\int_{|y|>t}f(x-y) \frac{y_j}{|y|^{n+1}}\,dy.
\end{equation*}

Motivated by this,
we obtain the following characterization of Hardy spaces.
Since its proof is a direct consequence of Theorem \ref{thm-truncated-rough-representation},
we omit the details.

\begin{corollary}
\label{cor-H1-rough-trunc}
Let $\gamma\in(-\infty,-n)\cup(0,\infty)$ and $N\in\mathbb{N}$. For each
$\ell\in\{1,\ldots,N\}$, let $\Omega^{(\ell)}$ satisfy the assumptions of
Theorem~\ref{thm-truncated-rough-representation}. Assume further that,
for any $f\in L^1(\mathbb R^n)$,
\begin{align}\label{eq5.21}
\|f\|_{H^1(\mathbb R^n)}
\sim
\|f\|_{L^1(\mathbb R^n)}
+
\sum_{\ell=1}^N
\|T_{\Omega^{(\ell)}}f\|_{L^1(\mathbb R^n)}.
\end{align}
Then
\begin{align*}
\|f\|_{H^1(\mathbb R^n)}
\sim
\|f\|_{L^1(\mathbb R^n)}
+
\sum_{\ell=1}^N
\left\|
\frac{T_{\Omega^{(\ell)},t}f(x)}{t^\gamma}
\right\|_{L^{1,\infty}(\mathbb R^{n+1}_+,\nu_\gamma)}.
\end{align*}
Here the positive equivalence constants are independent of $f$.
\end{corollary}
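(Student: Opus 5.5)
The plan is to prove the two directions of the equivalence separately. Both follow from Theorem \ref{thm-truncated-rough-representation}, applied once to each $\Omega^{(\ell)}$, combined with the assumed equivalence \eqref{eq5.21}. Throughout we write
\[
\mathcal W_\ell(f):=\left\|\frac{T_{\Omega^{(\ell)},t}f(x)}{t^\gamma}\right\|_{L^{1,\infty}(\mathbb R^{n+1}_+,\nu_\gamma)}.
\]
This is exactly the supremum appearing on the left of \eqref{eq5.17} and on the right of \eqref{eq-lower-T}. The reason is that $\lambda\nu_\gamma(\{|g|>\lambda\})$ with $g:=T_{\Omega^{(\ell)},t}f(x)/t^\gamma$ coincides with the quantity written there.

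\emph{Upper bound for $\mathcal W_\ell$.} Let $f\in H^1(\mathbb R^n)$. By \eqref{eq5.21}, each $T_{\Omega^{(\ell)}}f$ lies in $L^1(\mathbb R^n)$, and
\[
\|T_{\Omega^{(\ell)}}f\|_{L^1(\mathbb R^n)}\lesssim\|f\|_{H^1(\mathbb R^n)}.
\]
We may therefore apply \eqref{eq5.17} to each $\ell$. This gives
\[
\mathcal W_\ell(f)\le C_1\left\{\|T_{\Omega^{(\ell)}}f\|_{L^1(\mathbb R^n)}+\left[\mathfrak D_{\mathcal L}(\Omega^{(\ell)})+1\right]\|f\|_{L^1(\mathbb R^n)}\right\}\lesssim \|f\|_{H^1(\mathbb R^n)}.
\]
The last step also uses $\|f\|_{L^1}\le\|f\|_{H^1}$, which follows from \eqref{eq5.21}. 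The constants $\mathfrak D_{\mathcal L}(\Omega^{(\ell)})$ are fixed finite numbers, since the $\Omega^{(\ell)}$ satisfy the hypotheses of Theorem \ref{thm-truncated-rough-representation}. Summing over the finitely many $\ell$ gives the bound $\lesssim$ for the right-hand side of the corollary. If $f\in L^1\setminus H^1$, the left side is infinite; this is consistent with the reverse inequality below, which forces the right side to be infinite as well.

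\emph{Lower bound.} For the reverse inequality, apply \eqref{eq-lower-T} to each $\Omega^{(\ell)}$. This is valid for every $f\in L^1(\mathbb R^n)$, provided the pointwise convergence hypothesis \eqref{eq-pw} is part of the assumptions of Theorem \ref{thm-truncated-rough-representation} that each $\Omega^{(\ell)}$ is required to satisfy. It gives
\[
\|T_{\Omega^{(\ell)}}f\|_{L^1(\mathbb R^n)}\le C_2^{-1}\,\mathcal W_\ell(f),
\]
with the natural convention that both sides may be infinite. Adding $\|f\|_{L^1(\mathbb R^n)}$, summing over $\ell$, and invoking \eqref{eq5.21} yields the bound $\gtrsim$. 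It also shows that finiteness of the right-hand side forces $f\in H^1(\mathbb R^n)$.

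\emph{Expected main obstacle.} The step needing the most care is the lower bound. First, we must confirm that the phrase ``the assumptions of Theorem \ref{thm-truncated-rough-representation}'' includes \eqref{eq-pw}, so that \eqref{eq-lower-T} is available. Second, \eqref{eq-lower-T} must be applied in the regime where $T_{\Omega^{(\ell)}}f$ is only a priori defined as a weak-$L^1$ function, so that the inequality is meaningful when its left side is infinite. I would handle this via the extension of \eqref{eq-pw} to all of $L^1(\mathbb R^n)$ noted in that proof. That extension comes from \cite[Theorem 1.1]{l25} together with \cite[Lemma 3.3]{dlyyz26}, and makes $T_{\Omega^{(\ell)}}f=\lim_{t\to0^+}T_{\Omega^{(\ell)},t}f$ almost everywhere, so that the lower estimate holds without any prior integrability assumption on $T_{\Omega^{(\ell)}}f$.
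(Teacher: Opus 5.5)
Your proposal is correct and is exactly the direct argument the paper intends when it omits the proof as an immediate consequence of Theorem~\ref{thm-truncated-rough-representation}. For the upper bound with $f\in H^1(\mathbb R^n)$ you combine \eqref{eq5.17} with \eqref{eq5.21}; for the reverse bound, including the case $f\notin H^1(\mathbb R^n)$, you combine \eqref{eq-lower-T} with \eqref{eq5.21}; and you sum over the finitely many $\ell$ (minor point: \eqref{eq5.21} gives $\|f\|_{L^1(\mathbb R^n)}\lesssim\|f\|_{H^1(\mathbb R^n)}$ rather than $\le$, which is all you use). Your reading that \eqref{eq-pw} is among the assumed hypotheses agrees with the paper's subsequent remark, and the obstacle is milder than you expect: for $f\in C_{\rm c}^\infty(\mathbb R^n)$, condition \eqref{eq-pw} is just the principal-value definition of $T_\Omega$, whose limit exists because of \eqref{5.1}.
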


\begin{remark}
In the classical case of Riesz transforms, the
assumption \eqref{eq5.21} is precisely
the Fefferman--Stein characterization of $H^1(\mathbb R^n)$
[see \cite{fs72} and see also \eqref{5-9-0}] and, in this case, Corollary \ref{cor-H1-rough-trunc}
coincides with \cite[Theorem 5.1]{dlyyz26}.

For more general case, let $\theta_1,\ldots,\theta_m\in
C^\infty(\mathbb{S}^{n-1})$. It is known that,
for each $j\in\{1,\ldots,m\}$, there exist
$\alpha_{\theta_j}\in\mathbb{C}$ and $\Omega_{\theta_j}\in
C^\infty(\mathbb{S}^{n-1})$ satisfying \eqref{5.1} such that
the Fourier multiplier operator $K_jf = (\theta_j(\frac{\xi}{|\xi|})\widehat{f})^\vee$
satisfies that, for any $f\in \bigcup_{p\in[1,\infty)}L^p(\mathbb{R}^{n})$
and almost every $x\in\mathbb{R}^n$,
\begin{align}\label{eq-u1}
K_jf(x) = \alpha_{\theta_j}f(x)+ T_{\Omega_{\theta_j}}f(x);
\end{align}
see \cite[p.\,75]{s70} and also \cite[Lemma 2.A]{u82}.
Uchiyama \cite[Corollary 1]{u82} proved that, if
\begin{align*}
\operatorname{rank}
\begin{pmatrix}
\theta_1(\xi) & \ldots & \theta_m(\xi) \\
\theta_1(-\xi) & \ldots & \theta_m(-\xi)
\end{pmatrix}=2
\end{align*}on $\mathbb{S}^{n-1}$,
then, for any $f\in L^1(\mathbb{R}^n)$,
\begin{align}\label{eq-u2}
\|f\|_{H^1(\mathbb{R}^n)}\sim \sum_{j=1}^m\left\|
K_jf \right\|_{L^1(\mathbb{R}^n)}.
\end{align}
In this case, one can verify that
each $\Omega_{\theta_j}$,
$j\in\{1,\ldots,m\}$, satisfies both the
$L(\log L)$-Dini condition
and \eqref{eq-pw}.
Moreover, \eqref{eq-u1} and \eqref{eq-u2}
ensure that the assumption \eqref{eq5.21} with $N:=m$
and $\Omega^{(\ell)} := \Omega_{\theta_{\ell}}$ holds.
Thus, in this case, Corollary \ref{cor-H1-rough-trunc}
yields a new characterization of $H^1(\mathbb{R}^n)$.
\end{remark}

\smallskip
\noindent\textbf{Acknowledgements}\quad
The authors would like to sincerely thank Professor Guoen Hu for many
insightful and illuminating discussions
on rough singular integrals and the characterization of Hardy spaces.

\bigskip

\noindent
Dachun Yang (Corresponding author),
Wen Yuan and Yirui Zhao

\medskip

\noindent  Laboratory of Mathematics and Complex Systems
(Ministry of Education of China),
School of Mathematical Sciences, Institute for Advanced Study,  Beijing Normal University,
Beijing 100875, The People's Republic of China

\smallskip

\noindent{\it E-mails:} \texttt{dcyang@bnu.edu.cn} (D. Yang)

\noindent\phantom{\it E-mails:} \texttt{wenyuan@bnu.edu.cn} (W. Yuan)

\noindent\phantom{\it E-mails:} \texttt{yiruizhao@mail.bnu.edu.cn} (Y. Zhao)

\end{document}